\let\OLDthebibliography\thebibliography
\renewcommand\thebibliography[1]{
  \OLDthebibliography{#1}
  \setlength{\parskip}{0pt}
  \setlength{\itemsep}{0pt plus 0.3ex}
}
\pgfplotsset{compat=1.6}
\theoremstyle{plain}%
\newtheorem{theorem}{Theorem}[section]
\newtheorem{lemma}[theorem]{Lemma}
\newtheorem{proposition}[theorem]{Proposition}
\newtheorem*{conjecture*}{Conjecture}
 \numberwithin{equation}{section}
\theoremstyle{definition}
\newtheorem{definition}[theorem]{Definition}
\theoremstyle{remark}
\newtheorem{remark}[theorem]{Remark}
\let \le \leqslant
 \let \leq \leqslant
 \let \geq \geqslant
 \let \ge \geqslant
\DeclareMathOperator{\tr}{tr}
\DeclareMathOperator{\real}{Re}
\DeclareMathOperator{\support}{supp}
\DeclareMathOperator{\dist}{dist} 
\DeclareMathOperator*{\argmin}{arg\, min}
\definecolor{detailcolor00}{rgb}{0.4405, 0.204, 0.343}
\definecolor{detailcolor01}{rgb}{0.546, 0.215, 0.352}
\definecolor{detailcolor02}{rgb}{0.675, 0.247, 0.387} 
\definecolor{detailcolor03}{rgb}{0.775, 0.317, 0.455}
\definecolor{detailcolor04}{rgb}{0.830, 0.421, 0.553} 
\definecolor{detailcolor05}{rgb}{0.831, 0.533, 0.663}
\definecolor{detailcolor06}{rgb}{0.779, 0.619, 0.775}
\definecolor{detailcolor07}{rgb}{0.724, 0.694, 0.827}
\definecolor{detailcolor08}{rgb}{0.687, 0.770, 0.880}
\definecolor{detailcolor09}{rgb}{0.671, 0.839, 0.904}
\definecolor{detailcolor10}{rgb}{0.659, 0.872, 0.882}
\newcommand\pig[1]{\scalerel*[5.5pt]{\Big#1}{%
  \ensurestackMath{\addstackgap[1.5pt]{\big#1}}}}
\newcommand\pigl[1]{\mathopen{\pig{#1}}}
\newcommand\pigr[1]{\mathclose{\pig{#1}}}
    \pgfpathrectanglecorners{\pgfpointorigin}{\pgfpoint{3cm}{3cm}}%
\title{The phase transition of the Marcu-Fredenhagen ratio in the abelian lattice Higgs model}
\author{Malin P. Forsstr\"om}
\author{Malin P. Forsstr\"om\thanks{Email: palo@chalmers.se\newline\indent Address: Mathematical Sciences, Chalmers University of Technology and University of Gothenburg, SE-412 96 Göteborg, Sweden}}
\begin{document}

\maketitle

\begin{abstract}
The Marcu-Fredenhagen ratio is a quantity used in the physics literature to differentiate between phases in lattice Higgs models. It is defined as the limit of a ratio of expectations of Wilson line observables as the length of these lines go to infinity while the parameters of the model are kept fixed. In this paper, we show that the Marcu-Fredenhagen ratio exists in all predicted phases of the model, and show that it indeed undergoes a phase transition. In the Higgs phase of the model we do a more careful analysis of the ratio to deduce its first order behaviour and also give an upper bound on its rate of convergence. Finally, we also present a short and concise proof of the exponential decay of correlations in the Higgs phase.
\end{abstract}

\section{Introduction}

Lattice gauge theories are spin models on the directed edges of lattices, which takes spins in some group \( G,\) referred to as the structure group or gauge group. Lattice gauge theories were introduced independently by Wilson~\cite{w1974}, as lattice approximations of the quantum field theories that appear in the standard model (known as Yang-Mills theories), and by Wegner in~\cite{w1971}, as an example of a spin system with a phase transition without a local order parameter. The lattice Higgs model is a lattice gauge theory coupled to an external field.
%
Since their introduction, lattice gauge theories and the lattice Higgs model have attracted great interest in the physics community, and have been successfully used both for simulations and as toy models for the Yang-Mills model~\cite{fs1979,s1988}.

The natural observables in lattice Higgs models are Wilson loop observables, Wilson line observables, and ratios of such observables, such as the Marcu-Fredenhagen ratio \( \rho \) (see, e.g., \cite{bf1983, hgjjkn1987, fmf1986, m, s1988, bf1987, g2006, gr2002,fv2023}), which is the main focus of this paper.  These are all natural observables from a physics perspective (see, e.g.~\cite{hgjjkn1987, bf1987}), but are also interesting from a mathematical standpoint since they are believed to undergo phase transitions~\cite{s1988}. We draw the conjectured phase diagram (see, e.g.,~\cite{c1980,fs1979,ghms2011}) of the lattice Higgs model with gauge group \( \mathbb{Z}_2, \) also known as the Ising lattice Higgs model, in~Figure~\ref{fig: phase diagram}.
\begin{figure}[htp]\centering
		\begin{tikzpicture}[scale=1] 
  		\begin{axis}[xmin=-0.05,xmax=0.8,ymin=-0.05,ymax=0.8,xticklabel=\empty,yticklabel=\empty,axis lines = middle,xlabel=$\beta$,ylabel=$\kappa$,label style =
               {at={(ticklabel cs:1.0)}}, color=detailcolor07, ticks=none]

               \addplot[domain=0:0.32, samples=100, color=detailcolor00, dashed] {0.61-2*x^1.8};
               
               \addplot[domain=0.32:0.42, samples=100, color=detailcolor00] {0.61-2*x^1.8};
               
               \addplot[domain=0.42:0.44, samples=100, color=detailcolor00]{0.61-2*0.42^1.8-100*(x-0.42)^1.6};
               \addplot[domain=0.42:0.9, samples=100, color=detailcolor00]{0.61-2*0.42^1.8-0.1*(x-0.42)+0.14*(x-0.42)^2};
  		\end{axis}
  		
  		\draw (1.9,1.9) node[] {\scriptsize Confinement};
  		\draw (1.9,1.6) node[] {\scriptsize phase};
  		\draw (5.3,0.9) node[] {\scriptsize Free phase};
  		\draw (4.2,3.7) node[] {\scriptsize Higgs phase};

  		\draw (4.2,3.35) node[] {\tiny \(\rho \neq 0\)};
  		\draw (1.85,1.25) node[] {\tiny \(\rho \neq 0\)};
  		\draw (5.3,0.55) node[] {\tiny \(\rho = 0\)};

	\end{tikzpicture}

	\caption{The conjectured phase diagram of the Ising lattice Higgs model. In the Higgs phase and the confinement phase, the Marcu-Fredenhagen ratio is believed to be non-zero, and one expects exponential decay of correlations. In contrast, in the free phase, the Marcu-Fredenhagen ratio is believed to be identically zero, and expects exponential decay of correlations with polynomial correction.}\label{fig: phase diagram}
\end{figure}
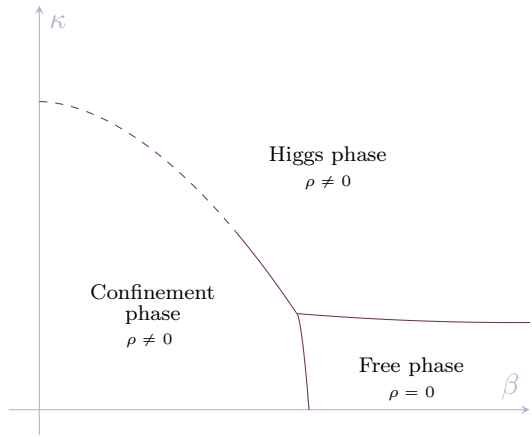
For further background, as well as more references, we refer the reader to~\cite{fs1979} and~\cite{s1988}.

In recent years, there has been a renewed interest in both lattice gauge theories and the lattice Higgs model in the mathematical community. In particular, in~\cite{a2021,flv2020,flv2023,c2019,sc2019, fv2023}, the asymptotic behavior of Wilson loop observables was described, and in~\cite{f2022b}, similar results were obtained for Wilson line observables. Further, ideas from disagreement percolation were used to understand the rate of the decay of correlations in~\cite{f2022,ac2022}. Unfortunately, the methods applied in these papers cannot be used to understand the Marcu-Fredenhagen ratio, which requires letting the length of the involved Wilson lines tend to infinity while the parameters of the models are kept fixed. 
This problem was the main motivation for the current paper. 
Our main results show that the Marcu-Fredenhagen ratio is non-zero in non-trivial subsets of the Higgs and confinement phases, while identically zero in a non-trivial subset of the free phase. As a consequence, it follows that the model undergo at least one phase transition. 
One of the main tools of the paper are various special cases of the cluster expansion in~\cite{fv2017}. This was inspired by the use of such expansions for pure lattice gauge theories in~\cite{fv2023}, but the use of these are more complicated when a Higgs field is added to the model and also needs to be different for the different phases of the model. To our knowledge, cluster expansions have not been used to study neither the Marcu-Fredenhagen ratio nor Wilson line observables prior to this paper. In particular, Wilson line observables need special handling in the free phase, where the natural cluster expansion does not converge.
Finally, we obtain a very short proof of exponential decay of correlations, which gives an alternative proof of the main results of~\cite{f2022} and~\cite{ac2022} in the case \( G = \mathbb{Z}_2,\) and also extends these from Wilson loops to the more general Wilson lines. For simplicity we state and prove all our results for \( G = \mathbb{Z}_2,\) but expect the proof ideas to work in more general settings, such as for finite abelian structure groups, with small modifications.

\subsection{Preliminary notation}

For \( m \geq 2 \), a graph naturally associated to $\mathbb{Z}^m$ has a vertex at each point \( x \in \mathbb{Z}^m \) with integer coordinates and oriented edges between nearest neighbors. When \( e_1\) and \( e_2\) are two oriented edges between the same vertices but with opposite orientation, we write \( e_2 = -e_1.\)

Let \( d\mathbf{e}_1 \coloneqq (1,0,0,\ldots,0)\), \( d\mathbf{e}_2 \coloneqq (0,1,0,\ldots, 0) \), \ldots, \( d\mathbf{e}_m \coloneqq (0,\ldots,0,1) \) be oriented edges corresponding to the unit vectors in \( \mathbb{Z}^m \). We say that an oriented edge \( e \) is \emph{positively oriented} if it is equal to a translation of one of these unit vectors, i.e.,\ if there is a \( v \in \mathbb{Z}^m \) and a \( j \in \{ 1,2, \ldots, m\} \) such that \( e = v + d{\mathbf{e}}_j \). 
If \( v \in \mathbb{Z}^m \) and \( j_1 <   j_2 \), then \( p = (v +  d\mathbf{e}_{j_1}) \land  (v+ d\mathbf{e}_{j_2}) \) is a positively oriented 2-cell, also known as a  \emph{positively oriented plaquette}. We let \( C_0(\mathbb{Z}^m) \), \( C_1(\mathbb{Z}^m)\), and \( C_2(\mathbb{Z}^m) \) denote the sets of oriented vertices, edges, and plaquettes.
Next, we let \( B_N \) denote the set \(   [-N,N]^m \cap \mathbb{Z}^m \), and we let \( C_0(B_N) \), \( C_1(B_N)\), and \( C_2(B_N) \) denote the sets of oriented vertices, edges, and plaquettes, respectively, whose endpoints are all in \( B_N \).

Whenever we talk about a lattice gauge theory we do so with respect to some (abelian) group \( (G,+)  \), referred to as the \emph{structure group}, together with a unitary and faithful representation \( \rho \) of \( (G,+) \).  

Now assume that a structure group \( (G,+) \), a unitary representation \( \rho \) of \( (G,+) \), and an integer \( N\geq 1 \) are given.
We let \( \Omega^1(B_N,G) \) denote the set of all  \( G \)-valued  1-forms \( \sigma \) on \( C_1(B_N) \), i.e., the set of all \( G \)-valued functions \(\sigma \colon  e \mapsto \sigma(e) \) on \( C_1(B_N) \) such that \( \sigma(e) =  -\sigma(-e) \) for all \( e \in C_1(B_N) \).
Similarly, we let \( \Omega^0(B_N,G) \) denote the set of all \( G\)-valued functions \( \phi \colon x \mapsto \phi(x)\) on \( C_0(B_N) \) which are such that \( \phi(x) = - \phi(-x) \) for all \( x \in C_1(B_N). \) 
When \( \sigma \in \Omega^1(B_N,G) \) and \( p \in C_2(B_N) \), we let \( \partial p \) denote the formal sum of the four edges \( e_1,\) \( e_2,\) \( e_3,\) and \( e_4 \) in the oriented boundary of \( p \), and define
\begin{equation*}
    d\sigma(p) \coloneqq \sigma(\partial p) \coloneqq \sum_{e \in \partial p} \sigma(e) \coloneqq \sigma(e_1) + \sigma(e_2) + \sigma(e_3) + \sigma(e_4).
\end{equation*} 
Similarly, when \( \phi \in \Omega^0(B_N,G) \) and \( e \in C_1(B_N) \) is an edge from \( x_1 \) to \( x_2 \), we let \( \partial e \) denote the formal sum \( x_2-x_1, \) and define \( d\phi(e) \coloneqq \phi(\partial e) \coloneqq \phi(x_2) - \phi(x_1). \)

 For \( k \in \{ 0,1,\dots, m\},\) a \( k \)-chain is a formal sum of positively oriented k-cells with integer coefficients. The support of a \(1\)-chain \(\gamma\), written \(\support \gamma\), is the set of directed edges with non-zero coefficient in \( \gamma.\)

\subsection{The abelian lattice Higgs model}

In this paper, we will consider the abelian lattice Higgs model in the fixed length limit (also known as the London limit). 
Given \( \beta, \kappa \geq 0 \), the action \( S_{N,\beta,\kappa } \) for the abelian lattice Higgs model on \( B_N \) (in the fixed length limit) is, for \( \sigma \in \Omega^1(E_N,G), \) and \( \phi \in \Omega^0(B_N,G), \) defined by
\begin{equation}\label{eq: general action}
    \begin{split}
        S_{N,\beta,\kappa}(\sigma, \phi)  &\coloneqq 
        -\beta \sum_{p \in C_2(B_N)}  \tr \rho\bigl( d\sigma(p)\bigr) 
        - \kappa\sum_{e  \in C_1(B_N) }  \tr \rho\bigl( \sigma(e)-\phi(\partial e)\bigr)  .
    \end{split}
\end{equation}
Elements \( \sigma \in \Omega^1(B_N,G) \) will be referred to as \emph{gauge field configurations}, and elements  \( \phi \in \Omega^0(B_N,G) \) will be referred to as \emph{Higgs field configurations}.
The quantity \( \beta \) is known as the \emph{gauge coupling constant}, and \( \kappa \) is known as the \emph{hopping parameter}. For a discussion of this action, see~\cite{flv2023}.

The Gibbs measure \( \mu_{N,\beta,\kappa} \) on \(\Omega^1(B_N,G) \times \Omega^0(B_N,G)\) corresponding to the action \( S_{N,\beta,\kappa} \) is given by
\begin{equation*}
    \mu_{N,\beta, \kappa}(\sigma, \phi)  \coloneqq
    Z_{N,\beta,\kappa}^{-1} e^{-S_{N,\beta,\kappa}(\sigma, \phi)} , \qquad \sigma \in \Omega^1(B_N,G) ,\, \phi \in \Omega^0(B_N,G),
\end{equation*}
where \( Z_{N,\beta,\kappa}\) is a normalizing constant. 
We refer to this lattice gauge theory as the  \emph{(fixed length) lattice Higgs model}. 
We let \( \mathbb{E}_{N,\beta,\kappa} \) denote the expectation corresponding to \( \mu_{N,\beta, \kappa}.\) 

Whenever \( f \colon \Omega^1(B_m,G) \times \Omega^0(B_m,G) \to \mathbb{R}\) for some \( m \geq 1,\) then, as a consequence of the Ginibre inequalities (see, e.g.,~\cite{f2022b}[Section 2.6]), the infinite volume limit 
\begin{equation*}
    \bigl\langle f(\sigma,\phi) \bigr\rangle_{\beta,\kappa} \coloneqq \lim_{N \to \infty} \mathbb{E}_{N,\beta,\kappa} \bigl[f(\sigma,\phi) \bigr]
\end{equation*}
exists and is translation invariant.

We say that a \(1\)-chain with finite support is a path if it has coefficients in \(\{-1,0,1\}.\)
We say that a path is a \emph{loop} if it has empty boundary \( \partial \gamma\) (see Section~\ref{section: preliminaries}). For example, any rectangular loop, as well as any finite disjoint union of such loops, corresponds to such a loop.
We say that a path is an \emph{open path} from \( x_1\in C_0^+(B_N) \) to \( x_2 \in C_0^+(B_N)\) if it has boundary \( \partial \gamma \coloneqq x_2 - x_1. \)

Given a path \( \gamma \), a gauge field configuration \( \sigma\in \Omega^1(B_N,G) ,\) and a Higgs field configuration \(  \phi\in \Omega^0(B_N,G) ,\)
the \emph{Wilson line observable} \( W_\gamma(\sigma,\phi) \) is defined by 
\begin{equation*}
    W_\gamma(\sigma,\phi)
    \coloneqq \tr \rho \bigl( \sigma(\gamma) - \phi(\partial \gamma) \bigr)
    =
    \tr \rho\Bigl( \sum_{e \in \gamma} \sigma(e) - \sum_{v \in \partial \gamma} \phi(v)\Bigr).
\end{equation*}
If \( \gamma \) is an open path from \( x_1 \) to \( x_2 \), then \( \phi( \partial \gamma) = \phi(x_2)-\phi(x_1), \) and if \( \gamma \) is a closed loop, then  \(\phi(\partial \gamma) = 0. \) If \( \gamma \) is a  loop, then \(  W_\gamma(\sigma) \coloneqq W_\gamma(\sigma,\phi)\) is referred to as a \emph{Wilson loop observable}.

\subsection{The Marcu-Fredenhagen ratio}

Assume that \( (R_n)_{n\geq 1}\) and \( (T_n)_{n\geq 1}\) are increasing sequences of positive integers. For each \( n \geq 1,\) let \( \gamma^{(n)} \) be a rectangular loop with side lengths \( 2R_n\) and \( T_n\) (see Figure~\ref{figure: U Wilson line b}). Let \( \gamma_1^{(n)}\) be as in Figure~\ref{figure: U Wilson line a}, and let \( \gamma_2^{(n)} = \gamma^{(n)}-\gamma_1^{(n)}.\) 

Define
\begin{equation}\label{eq: line loop ratio}
    \rho(\gamma_1^{(n)},\gamma_2^{(n)}) \coloneqq \frac{\langle W_{\gamma_1^{(n)}}(\sigma,\phi) \rangle_{\beta,\kappa}\langle W_{\gamma_2^{(n)}}(\sigma,\phi) \rangle_{\beta,\kappa} }{\langle W_{\gamma^{(n)}}(\sigma)\rangle_{\beta,\kappa}}.
\end{equation}
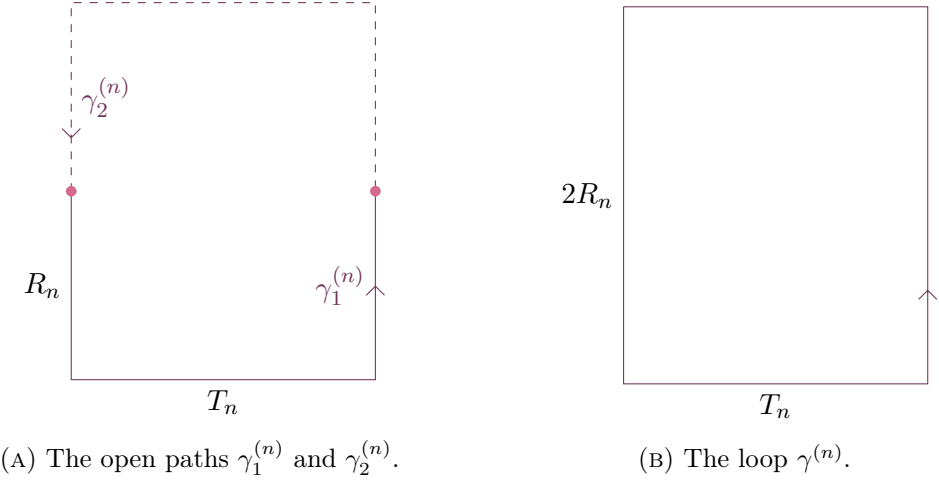
\begin{figure}[!htp]
    \centering
    \begin{subfigure}[b]{0.45\textwidth}
        \centering
        \begin{tikzpicture}
            \draw[detailcolor00] (0,2.5) -- (0,0) node[midway, left] {\color{black}\( R_n \)} -- (4,0) node[midway,anchor=north] {\color{black}\( T_n\)} -- (4,2.5) node[midway,anchor=east] {\(\gamma_1^{(n)}\)};

            \draw[detailcolor00, dashed] (0,2.5) -- (0,5)  node[midway,anchor=west] {\(\gamma_2^{(n)}\)}  -- (4,5)   -- (4,2.5);

           \draw[detailcolor00,-{Straight Barb[length=1.2mm,color=detailcolor00!40!black]}] (4,1.2) -- (4,1.25);
            
           \draw[detailcolor00,-{Straight Barb[length=1.2mm,color=detailcolor00!40!black]}] (0,3.25) -- (0,3.2);
           
            \fill[detailcolor04] (0,2.5) circle (2pt) node[anchor=south] {\color{black}\( \)};
            \fill[detailcolor04] (4,2.5) circle (2pt) node[anchor=south] {\color{black}\( \)};
        \end{tikzpicture}
        \caption{The open paths \(  \gamma_1^{(n)} \) and  \(  \gamma_2^{(n)}. \)} \label{figure: U Wilson line a} 
    \end{subfigure}
    \hfil
    \begin{subfigure}[b]{0.45\textwidth}
        \centering
        \begin{tikzpicture}
            \draw[detailcolor00] (0,5) -- (0,0) node[midway, left] {\color{black}\( 2R_n\)} --  (4,0) node[midway,anchor=north] {\color{black}\( T_n\)}  -- (4,5) -- (0,5); 
            
            \draw[detailcolor00,-{Straight Barb[length=1.2mm,color=detailcolor00!40!black]}] (4,1.2) -- (4,1.25);
        \end{tikzpicture}
        \caption{The loop \( \gamma^{(n)} \).} \label{figure: U Wilson line b} 
    \end{subfigure}
    \caption{The open paths \( \gamma_1^{(n)} \) and  \( \gamma_2^{(n)} \) and the rectangular loop \( \gamma^{(n)} \) 
 that appear in~\eqref{eq: line loop ratio}.}
    \label{figure: U Wilson line}
\end{figure}%
The limit  \( \lim_{n \to \infty} \rho(\gamma_1^{(n)},\gamma_2^{(n)})\) is referred to as the \emph{Marcu-Fredenhagen order parameter} in the physics literature (see, e.g., ~\cite{fmf1986, m}). Note that it is not obvious that this limit exists, nor that it is independent of the choice of \( (R_n)_{n\geq 1}\) and \( (T_n)_{n \geq 1}.\)  
If this limit (assuming it exists) is zero, the corresponding model is argued to have charged states, and no confinement, whereas if the limit is non-zero, then there should be no charged states and confinement, see, e.g.,~\cite{m,s1988,bf1987,fm1988,ghms2011}.

Several ratios similar to~\eqref{eq: line loop ratio} has been considered in the physics literature, see, e.g.,~\cite{g2006}, and the main ideas in this paper can be adapted to cover also these cases.

\subsection{Main results}
 
Our first main result considers the Marcu-Fredenhagen ratio in the Higgs phase.
\begin{theorem}\label{theorem: mf ratio Higgs} 
    Let \( G = \mathbb{Z}_2, \) \( \beta \geq 0,\) and \( \kappa \geq \kappa_0^{(\textrm{Higgs})},\) where \( \kappa_0^{(\textrm{Higgs})} = \kappa_0^{(\textrm{Higgs})}(m)>0\) is  defined in~\eqref{eq: kappa0}.
     Further, let \( (R_n)_{n\geq 1}\) and \( (T_n)_{n\geq 1}\) be increasing sequences of positive integers such that \( \limsup_{n\to \infty} T_n/R_n < \infty ,\) and for each \( n\geq 1,\) let \( \gamma_1^{(n)}\) and \( \gamma_2^{(n)}\) be as in Figure~\ref{figure: U Wilson line}.
    Then the following hold.
    \begin{enumerate}[label=(\roman*)]
        \item\label{item: thm 1} The limit
        \begin{equation} 
            \rho = \rho_{\beta,\kappa}\coloneqq \lim_{n \to \infty} \rho(\gamma_1^{(n)},\gamma_2^{(n)})
        \end{equation}
        exists and is independent of \( (R_n)_{n\geq 1} \) and \( (T_n)_{n\geq 1}.\)
        
        \item\label{item: thm 2} The limit \( \rho \) is strictly positive, i.e., \(\rho>0.\)
        
        \item\label{item: thm 3} For all  \(n \geq 1\) and \( \varepsilon>0, \) there is \( C_\varepsilon >0 \) such that 
        \begin{equation}\label{eq: decay rate}
            \begin{split}
                \bigl| \log \rho_{n}
                -
                \log \rho
                \bigr|
                &\leq   
                4C_{\varepsilon} \sum_{j=1}^\infty   e^{-4\max(j,\min(R_n,T_n))(\kappa-\kappa_0-\varepsilon) }
                \\&\quad+
                2C_{\varepsilon}\max(T_n-2R_n,0)
                e^{-4\max(2R_n,\min(R_n,T_n))(\kappa-\kappa_0-\varepsilon) }
                . 
            \end{split}
        \end{equation} 
        Here \( C_\varepsilon \) is defined in~\eqref{eq: cbeta*} and does not depend on \( \beta\) nor \( \kappa.\)  
    \end{enumerate} 
\end{theorem}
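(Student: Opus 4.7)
The plan is to work in the unitary gauge $\phi\equiv 0$ and develop a low-temperature cluster expansion around the Higgs ground state $\sigma\equiv 0$, following the scheme of~\cite{fv2017} employed elsewhere in the paper. After gauge fixing, the action becomes $-\beta\sum_p(-1)^{d\sigma(p)}-\kappa\sum_e(-1)^{\sigma(e)}$ and each Wilson observable reduces to $\prod_{e}(-1)^{\sigma(e)}$ over the edges of the relevant path. I would parameterize configurations by their defect set $S=\{e:\sigma(e)=1\}$ (with weight $e^{-2\kappa|S|-2\beta|P_{\mathrm{bad}}(S)|}$, where $P_{\mathrm{bad}}(S)$ collects plaquettes with an odd number of incident defect edges), group defects into polymers, and verify the Kotecký--Preiss criterion for $\kappa\geq\kappa_0^{(\mathrm{Higgs})}(m)$, yielding an absolutely convergent expansion
\begin{equation*}
    \log Z=\sum_C w(C),\qquad \log\langle W_\alpha\rangle=\sum_C\bigl[w_\alpha(C)-w(C)\bigr],
\end{equation*}
in which inserting $W_\alpha$ multiplies each polymer activity by a sign $(-1)^{|\alpha\cap\text{polymer}|}$, so that $w_\alpha(C)=w(C)$ whenever $C$ is disjoint from $\alpha$.

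The decisive step is a parity cancellation. Writing
\begin{equation*}
    \log\rho_n=\sum_C\bigl[w_{\gamma_1}(C)+w_{\gamma_2}(C)-w_\gamma(C)-w(C)\bigr]
\end{equation*}
and using that $\gamma=\gamma_1\sqcup\gamma_2$ edge-wise, the effective sign coefficient associated to a cluster $C$ collapses to $(-1)^a+(-1)^b-(-1)^{a+b}-1$ with $a=|C\cap\gamma_1|$, $b=|C\cap\gamma_2|$. This vanishes unless both $a$ and $b$ are odd, in which case it equals $-4$. Hence only clusters intersecting \emph{each} half of the loop in an odd number of edges contribute, and such clusters split into two geometric families: (a) those supported within bounded distance of one of the two endpoints $x_1,x_2$ where $\gamma_1$ and $\gamma_2$ meet; (b) those bridging the two long sides of the rectangle, which must contain defects spanning the transverse distance $2R_n$.

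Parts (i) and (ii) follow from (a): the endpoint contributions stabilize as $R_n,T_n\to\infty$ by translation invariance and absolute convergence, giving a well-defined limit $\log\rho$ independent of $(R_n,T_n)$, and the finiteness of the sum yields $\rho>0$. For part (iii), the Kotecký--Preiss bound $|w(C)|\leq C_\epsilon\, e^{-4(\kappa-\kappa_0-\epsilon)|C|}$ controls cluster weights by their combinatorial size $|C|$. Summing endpoint-centered clusters of size $j$ produces the first term of~\eqref{eq: decay rate} via $\sum_j e^{-4\max(j,\min(R_n,T_n))(\kappa-\kappa_0-\epsilon)}$, while summing bridging clusters over the $T_n-2R_n$ middle positions of the two long sides produces the second term with the $\max(2R_n,\min(R_n,T_n))$ exponent. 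The main obstacle I anticipate is setting up the polymer representation so that the expansion converges at rate $4(\kappa-\kappa_0)$ uniformly in $\beta\geq 0$ and handling the plaquette factor, which is the reason the Higgs-phase expansion requires more care than a naive Peierls argument; once the expansion and its decay rate are in place, the parity cancellation and geometric counting are essentially routine bookkeeping.
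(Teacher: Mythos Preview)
Your proposal is correct and follows essentially the same route as the paper: unitary gauge, an edge-based polymer/cluster expansion verified via the Koteck\'y--Preiss criterion of~\cite{fv2017} for $\kappa>\kappa_0^{(\mathrm{Higgs})}$ (uniformly in $\beta\ge 0$, since the plaquette factor only decreases activities), the parity identity yielding $\log\rho_N=-4\sum_{\mathcal S}\Psi_{\beta,\kappa}(\mathcal S)\,\mathbb{1}(\mathcal S(\gamma_1)=\mathcal S(\gamma_2)=1)$, and then the same geometric counting of clusters by $\dist(e,\gamma_2)$ for $e\in\gamma_1$ that produces the two terms in~\eqref{eq: decay rate}. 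Your ``endpoint'' versus ``bridging'' dichotomy is exactly the paper's observation that at most two edges of $\gamma_1$ lie at each distance $j\neq 2R_n$ from $\gamma_2$, while up to $\max(T_n-2R_n,0)$ edges lie at distance $2R_n$; the stabilisation of the truncated sum $a_k$ for $n$ large is precisely your ``endpoint contributions stabilise by translation invariance.''
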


In other words, Theorem~\ref{theorem: mf ratio Higgs} says that the Marcu-Fredenhagen parameter exists and is strictly positive when \( \kappa \geq \kappa_0^{\text{(Higgs)}}. \) Also, it gives a upper bound on the convergence rate, thus stating how large an estimate for \( -\log \rho_n\) has to be for one to be able to conclude that \( \rho > 0.\)

We note that the assumption that \( \limsup_{n\to \infty} T_n/R_n < \infty\) is needed to guarantee that the right-hand side of~\eqref{eq: decay rate} goes to zero as \( n \to \infty.\)

Our next result complements our first theorem, Theorem~\ref{theorem: mf ratio Higgs}, by showing that the Marcu-Fredenhagen ratio is  non-zero also in parts of the confinement regime, i.e., when \( \beta \) and \( \kappa \) are both sufficiently small.  

\begin{theorem}\label{theorem: mf ratio confinement}
    Let \( G = \mathbb{Z}_2, \) \( 0 < \beta < \beta_0^{(\text{conf})}, \) and \(  \kappa >0, \) where \( \beta_0^{(\text{conf})} = \beta_0^{(\text{conf})}(m)>0\) is defined in~\eqref{eq: beta0 def}. Further, let \( (R_n)_{n\geq 1}\) and \( (T_n)_{n\geq 1}\) be increasing sequences of positive integers such that \( \limsup_{n\to \infty} T_n/R_n < \infty ,\) and for each \( n\geq 1,\) let \( \gamma_1^{(n)}\) and \( \gamma_2^{(n)}\) be as in Figure~\ref{figure: U Wilson line}. 
    Then the following hold.
    \begin{enumerate}[label=(\roman*)]
    	\item The limit
    	\begin{equation*}
    		\rho = \rho_{\beta,\kappa} \coloneqq \lim_{n\to \infty} \rho(\gamma_1^{(n)},\gamma_2^{(n)})
    	\end{equation*}
    	exists and is independent of \( (R_n)_{n\geq 1} \) and \( (T_n)_{n\geq 1}. \)
    	\item The limit \( \rho \) is strictly positive, i.e., \( \rho>0. \)
    \end{enumerate}
\end{theorem}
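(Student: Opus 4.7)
\textbf{Proof sketch for Theorem~\ref{theorem: mf ratio confinement}.}

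The plan is to pass to unitary gauge and run a high-temperature polymer expansion in $\tanh\beta$ around the solvable point $\beta = 0$, in the spirit of the cluster expansions used in~\cite{fv2023}. First, change variables to $\tau(e) := \sigma(e) - \phi(\partial e) \in \mathbb{Z}_2$; the map $(\sigma,\phi) \mapsto (\tau,\phi)$ is a bijection, $d\sigma(p) = d\tau(p)$ (since the $\phi(\partial e)$ contributions telescope around each plaquette), and $\sigma(\gamma) - \phi(\partial\gamma) = \tau(\gamma)$ for every path $\gamma$. The model is therefore equivalent to the edge-Ising measure $\mu(\tau) \propto \exp\bigl(\beta \sum_p (-1)^{d\tau(p)} + \kappa \sum_e (-1)^{\tau(e)}\bigr)$, with $W_\gamma(\sigma,\phi) = (-1)^{\tau(\gamma)}$. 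At $\beta = 0$ the $\tau(e)$ are independent with mean $\tanh\kappa$, so $\langle W_\gamma\rangle_{0,\kappa} = (\tanh\kappa)^{|\gamma|}$; and since $\gamma_1^{(n)} + \gamma_2^{(n)} = \gamma^{(n)}$ as $1$-chains we have $|\gamma_1^{(n)}|+|\gamma_2^{(n)}|=|\gamma^{(n)}|$, so $\rho \equiv 1$ at $\beta = 0$. This is the reference value around which I perturb.

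For $\beta > 0$, expanding each plaquette factor as $e^{\beta(-1)^{d\tau(p)}} = \cosh\beta\,(1 + \tanh\beta\,(-1)^{d\tau(p)})$ and summing out $\tau$ against its $\beta = 0$ law yields the polymer ratio
\[
\langle W_\gamma\rangle_{N,\beta,\kappa} = \frac{Z_N(\gamma)}{Z_N(\emptyset)}, \qquad Z_N(\eta) := \sum_{P \subset C_2(B_N)} w_\eta(P), \quad w_\eta(P) := (\tanh\beta)^{|P|}(\tanh\kappa)^{|(\eta + \partial P) \bmod 2|}.
\]
Since $\tanh\kappa \leq 1$, every weight $w_\eta(P)$ is bounded by $(\tanh\beta)^{|P|}$ \emph{uniformly in $\kappa > 0$}, so the cluster expansion of~\cite{fv2017} applied to $\log Z_N(\gamma)$ and $\log Z_N(\emptyset)$ converges absolutely and uniformly in $N, \kappa$ and $\gamma$ whenever $\tanh\beta < \beta_0^{(\text{conf})}$ is small enough for the Koteck\'y--Preiss criterion. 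Taking the difference and using $|\gamma_1^{(n)}|+|\gamma_2^{(n)}|=|\gamma^{(n)}|$ to cancel the explicit $\tanh\kappa$-factors gives
\[
\log\rho(\gamma_1^{(n)},\gamma_2^{(n)}) = \sum_C \Phi(C)\,\bigl[w_{\gamma_1^{(n)}}(C) + w_{\gamma_2^{(n)}}(C) - w_{\gamma^{(n)}}(C) - w_\emptyset(C)\bigr],
\]
where $w_\eta(C) = \prod_{P \in C} w_\eta(P)$ and $\Phi$ is the Ursell function.

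The decisive observation is a cancellation: if there exists $i \in \{1,2\}$ with $\partial P \cap \gamma^{(n)} \subset \gamma_i^{(n)}$ for \emph{every} $P \in C$, then $w_{\gamma^{(n)}}(C) = w_{\gamma_i^{(n)}}(C)$ and $w_{\gamma_{3-i}^{(n)}}(C) = w_\emptyset(C)$, so the bracket vanishes. Hence only clusters whose polymers collectively touch both $\gamma_1^{(n)}$ and $\gamma_2^{(n)}$ survive, and the connectivity of the cluster forces its support to either (i) lie within its own diameter of one of the two splitting points $x_1 = (0, R_n)$, $x_2 = (T_n, R_n)$, or (ii) stretch between them. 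Contributions of type (i) form an absolutely convergent series whose terms depend only on the local infinite-lattice structure around $x_1$ or $x_2$, and stabilize as $n \to \infty$ to an explicit limit independent of $(R_n),(T_n)$. Contributions of type (ii) are bounded via Koteck\'y--Preiss by $C(\beta) \max(R_n, T_n) (\tanh\beta)^{c\min(R_n, T_n)}$, which vanishes in the limit precisely under the hypothesis $\limsup T_n/R_n < \infty$. Combining these yields the existence and sequence-independence of the limit, and since the total correction is uniformly bounded for $\beta < \beta_0^{(\text{conf})}$, one obtains $|\log\rho|<\infty$ and thus $\rho > 0$.

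The main obstacle is the combinatorial cancellation step: one must verify that clusters whose polymers touch $\gamma^{(n)}$ only on a single side ($\gamma_1^{(n)}$ or $\gamma_2^{(n)}$) contribute zero, and carefully identify the surviving ``cross'' clusters to bound them by Koteck\'y--Preiss. The universal bound $\tanh\kappa \leq 1$ secures uniformity of the cluster expansion in $\kappa$, but the bookkeeping of how each polymer's mod-$2$ boundary intersects the $1$-chain supports is delicate and is the chief source of technical work. The order of limits $N \to \infty$ and $n \to \infty$ is handled by first extracting the infinite-volume limit via Ginibre (as noted in the introduction) and then applying the infinite-volume cluster expansion, whose uniform convergence justifies taking $n \to \infty$.
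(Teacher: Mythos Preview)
Your overall strategy matches the paper's: pass to unitary gauge, perform a high-temperature expansion in $\tanh 2\beta$, set up a polymer/cluster expansion on plaquette configurations, and then observe that in the difference $\log Z[\gamma_1]+\log Z[\gamma_2]-\log Z[\gamma_1+\gamma_2]-\log Z[0]$ only clusters touching both $\gamma_1^{(n)}$ and $\gamma_2^{(n)}$ survive. The cancellation step you flag as the ``main obstacle'' is in fact routine and handled in the paper exactly as you describe.

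The genuine gap is elsewhere: your convergence argument for the cluster expansion does not go through. The weights $w_\eta(P)=(\tanh\beta)^{|P|}(\tanh\kappa)^{|(\eta+\partial P)\bmod 2|}$ do \emph{not} factorize over connected components of $P$, because the exponent $|(\eta+\partial P)\bmod 2|$ contains the global term $|\eta|$. What factorizes is the normalized activity $\tilde w_\eta(P)=w_\eta(P)/w_\eta(\emptyset)=(\tanh\beta)^{|P|}(\tanh\kappa)^{|\partial P\setminus\eta|-|\partial P\cap\eta|}$, and it is $\tilde w_\eta$, not $w_\eta$, that enters the Koteck\'y--Preiss criterion. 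Your bound $w_\eta(P)\le(\tanh\beta)^{|P|}$ is correct but irrelevant; the needed bound $\tilde w_\eta(P)\le(\tanh\beta)^{|P|}$ is \emph{false}, since whenever $|\partial P\cap\gamma|>|\partial P\setminus\gamma|$ the $\kappa$-factor exceeds $1$ and diverges as $\kappa\to 0$. Concretely, if $P$ is the full $2R_n\times T_n$ interior of $\gamma^{(n)}$ then $\partial P=\gamma^{(n)}$ and $\tilde w_{\gamma^{(n)}}(P)=(\tanh 2\beta)^{2R_nT_n}(\tanh 2\kappa)^{-2(2R_n+T_n)}$, which for fixed $n$ is unbounded in $\kappa$. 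So uniformity in $\kappa$ fails and the Koteck\'y--Preiss criterion is not verified.

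The paper closes this gap with a dedicated geometric lemma (Lemma~\ref{lemma: geometry upper bound}) showing that for the specific paths $\gamma\in\{0,\gamma_1^{(n)},\gamma_2^{(n)},\gamma_1^{(n)}+\gamma_2^{(n)}\}$ one has $\tilde w_\gamma(P)\le(\tanh 2\beta)^{(1-o_n(1))|P|}$, where the $o_n(1)$ depends on $\kappa$ but vanishes as $R_n,T_n\to\infty$. The argument exploits the rectangular shape: for every edge of $\partial P$ on $\gamma$, either $P$ must span across the rectangle (contributing $\ge\min(R_n,T_n)$ plaquettes) or there are compensating boundary edges off $\gamma$. This is the step you are missing, and without it the cluster expansion is not known to converge for the $\gamma$-dependent activities. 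Once you supply it, the rest of your outline (cancellation, localization near the two splitting points, and the tail bound on long clusters) coincides with the paper's proof.
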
 
Theorem~\ref{theorem: mf ratio confinement} shows that in at least parts of the confinement phase of the lattice Higgs model, the Marcu-Fredenhagen ratio is strictly positive.

Our next result concerns the Marcu-Fredenhagen ratio in the free phase.

\begin{theorem}\label{theorem: mf ratio free}
	Let \( G = \mathbb{Z}_2, \) \( \beta >0\) be suffciently large, and \(  \kappa > 0 \) be sufficiently small. Further, let \( (R_n)_{n\geq 1}\) and \( (T_n)_{n\geq 1}\) be strictly increasing sequences of positive integers, and for each \( n\geq 1,\) let \( \gamma_1^{(n)}\) and \( \gamma_2^{(n)}\) be as in Figure~\ref{figure: U Wilson line}. 
	Then 
    \begin{enumerate}[label=(\roman*)]
    	\item The limit
    	\begin{equation*}
    		\rho = \rho_{\beta,\kappa} \coloneqq \lim_{n\to \infty} \rho(\gamma_1^{(n)},\gamma_2^{(n)})
    	\end{equation*}
    	exists and is independent of \( (R_n)_{n\geq 1} \) and \( (T_n)_{n\geq 1}. \)
    	\item The limit \( \rho \) is identically zero, i.e., \( \rho=0. \)
    \end{enumerate} 
\end{theorem}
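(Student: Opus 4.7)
The plan is to exploit the $\mathbb{Z}_2$ gauge symmetry to eliminate the Higgs field, reducing the problem to a pure gauge theory with an external edge field, and then apply a high-temperature expansion in $\kappa$. First, I would perform the change of variables $\tilde\sigma(e) \coloneqq \sigma(e) - \phi(\partial e)$, which is a bijection on gauge configurations for each fixed $\phi$. Since $d\circ d = 0$, we have $d\tilde\sigma = d\sigma$, and $\sigma(\gamma)-\phi(\partial\gamma) = \tilde\sigma(\gamma)$ in $\mathbb{Z}_2$, so both the action and each gauge-invariant Wilson observable $W_\gamma$ depend only on $\tilde\sigma$. The sum over $\phi$ then factors out, and the problem reduces to computing $\langle \tr\rho(\tilde\sigma(\gamma))\rangle$ under the modified action
\begin{equation*}
\tilde S(\tilde\sigma) = -\beta\sum_{p} \tr\rho\bigl(d\tilde\sigma(p)\bigr) - \kappa\sum_{e}\tr\rho\bigl(\tilde\sigma(e)\bigr).
\end{equation*}
Both Wilson loops and Wilson lines now take the same form, the only distinction being whether $\partial\gamma$ is trivial.

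Next, I would apply the high-temperature expansion $e^{\kappa\tr\rho(\tilde\sigma(e))} = \cosh(\kappa)\bigl(1+\tanh(\kappa)\,\tr\rho(\tilde\sigma(e))\bigr)$ edge by edge, producing the representation
\begin{equation*}
\bigl\langle\tr\rho\bigl(\tilde\sigma(\gamma)\bigr)\bigr\rangle_{\beta,\kappa} = \frac{\sum_{S\subseteq C_1(B_N)}(\tanh\kappa)^{|S|}\,\tilde W_{\gamma\oplus S}(\beta)}{\sum_{S\subseteq C_1(B_N)}(\tanh\kappa)^{|S|}\,\tilde W_{S}(\beta)},
\end{equation*}
where $\tilde W_\alpha(\beta)$ denotes the pure $\mathbb{Z}_2$ gauge Wilson expectation and, by gauge invariance of the pure theory, vanishes unless $\partial\alpha = 0$. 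Hence the numerator is supported on $S$ with $\partial S = \partial\gamma$. For the Wilson loop, $\partial\gamma^{(n)} = 0$ allows $S = \emptyset$, giving leading contribution $\tilde W_{\gamma^{(n)}}(\beta) \asymp e^{-\alpha_\beta|\gamma^{(n)}|}$ (pure-gauge perimeter law at large $\beta$). For the Wilson line $\gamma_i^{(n)}$, the minimal $S$ is a straight path of length $T_n$ between the two endpoints, and $\gamma_i^{(n)}\oplus S$ is then a rectangle of perimeter $2(R_n + T_n)$, yielding a leading contribution $(\tanh\kappa)^{T_n}e^{-\alpha_\beta\cdot 2(R_n+T_n)}$.

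Combining these asymptotics gives
\begin{equation*}
\rho(\gamma_1^{(n)},\gamma_2^{(n)}) \asymp \frac{(\tanh\kappa)^{2T_n}\,e^{-4\alpha_\beta(R_n+T_n)}}{e^{-2\alpha_\beta(2R_n+T_n)}} = \bigl(\tanh(\kappa)\,e^{-\alpha_\beta}\bigr)^{2T_n},
\end{equation*}
which tends to $0$ as $T_n\to\infty$ provided $\tanh(\kappa)\,e^{-\alpha_\beta} < 1$, a condition guaranteed by choosing $\beta$ sufficiently large and $\kappa$ sufficiently small. Existence and sequence-independence of the limit should follow from the convergence of an underlying cluster/polymer expansion, with vortex plaquettes at activity $\asymp e^{-2\beta}$ and high-temperature edges at activity $\tanh(\kappa)$ playing the role of polymers.

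The main obstacle, as flagged in the introduction, is that the naive cluster expansion does not converge for Wilson lines: the minimal-path contribution $(\tanh\kappa)^{T_n}$ is non-perturbative relative to the background fluctuations, and the relevant $S$ in the numerator must contain a large non-local object (a path of length $T_n$ between the endpoints). I would resolve this by explicitly resumming all $S$ with $\partial S = \partial\gamma$ into a single ``two-point function'' factor, extracting it as a multiplicative prefactor, and applying cluster expansion only to the remaining ratio. Showing uniform convergence of this remainder as $n\to\infty$ is the technical crux; the exponential decay of the extracted prefactor then delivers both the existence of $\rho$ and the fact that $\rho = 0$.
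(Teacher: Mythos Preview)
Your outline matches the paper's approach closely: pass to unitary gauge (Section~2.2), perform the high-temperature expansion in $\kappa$ (Lemma~5.1), and for the open line decompose the sum over $S$ with $\partial S = \partial\gamma_1^{(n)}$ by singling out the connected ``backbone'' path $\gamma_0$ joining the two endpoints (Lemma~5.2). The residual partition function $\check Z[\gamma_1+\gamma_0,\gamma_0]$ then admits a two-species cluster expansion (closed $\kappa$-loops and closed $\beta$-vortices, Section~5.2), and one concludes by summing $(\tanh 2\kappa)^{|\gamma_0|}$ over backbones of length $\ge T_n$.

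Where your sketch stops short is precisely the step you flag as the crux: controlling the remainder ratio $\check Z[\gamma_1+\gamma_0,\gamma_0]^2\big/\bigl(\check Z[\gamma_1+\gamma_2]\,\check Z[0]\bigr)$ uniformly in $n$. The naive cluster expansion for the log of this ratio contains a contribution $\rho(\omega(q_{\gamma_1+\gamma_0}))-1$ from every vortex $\omega$ linking $\gamma_1+\gamma_0$; there are $\asymp|\gamma_1^{(n)}|\sim R_n+T_n$ such terms, and these do \emph{not} cancel against the denominator term-by-term, so the bound diverges with $n$. The paper resolves this with a reflection trick you have not anticipated: writing $\hat\gamma_0$ for the mirror image of $\gamma_0$ across the line through the endpoints, symmetry of the sum over clusters gives $\sum_{\mathcal{S}}\Psi(\mathcal{S})\prod_\omega\rho(\omega(q_{\gamma_1+\gamma_0})) = \sum_{\mathcal{S}}\Psi(\mathcal{S})\prod_\omega\rho(\omega(q_{\gamma_2+\hat\gamma_0}))$, so the doubled numerator term can be replaced by a sum and then rewritten as the factored form $-(\rho(\omega(q_{\gamma_1+\gamma_0}))-1)(\rho(\omega(q_{\gamma_2+\hat\gamma_0}))-1)$ plus a correction supported on clusters that link $\gamma_0+\hat\gamma_0$. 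The factored term is nonzero only for clusters linking \emph{both} closed loops, which forces them near $\gamma_0\cup\hat\gamma_0$ (or to be of large size); hence the full remainder is $O(|\gamma_0|)$ rather than $O(|\gamma_1^{(n)}|)$. This reflection step is the one genuine idea your proposal would still need to supply.
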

The most important consequence of Theorems~\ref{theorem: mf ratio Higgs}, \ref{theorem: mf ratio confinement} and~\ref{theorem: mf ratio free} is that they together prove that the Marcu-Fredenhagen ratio indeed has a phase transition, implying in particular that it can be used as an order parameter.

Our last result gives an upper bound on the decay of correlation in the Higgs phase of the abelian lattice Higgs model. This result extends the results in~\cite{ac2022} and~\cite{f2022} to Wilson line observables in the case \( G = \mathbb{Z}^2.\) However, the main reason we include this result here is that the methods used in this paper yields a very short proof which is very different to the proofs in~\cite{ac2022} and~\cite{f2022}.

\begin{theorem}\label{theorem: correlation}
    Let \( G = \mathbb{Z}_2, \) \( \beta \geq 0,\) and \( \kappa \geq \kappa_0^{\text{(Higgs)}}.\) Further, let \( \gamma_1 \) and \( \gamma_2 \) be two paths. Then, for any \( \varepsilon > 0\) there is \( C_\varepsilon > 0 \)  such that  
    \begin{align*} 
        &\bigl| \langle W_{\gamma_1+\gamma_2} \rangle_{\beta,\kappa} -\langle W_{\gamma_1}\rangle_{\beta,\kappa} \langle W_{\gamma_2}\rangle_{\beta,\kappa}  \bigr|
        \leq
        C_{\varepsilon}\bigl|\support \gamma_1\bigr| e^{-4(\kappa-\kappa_0-\varepsilon) \dist(\gamma_1,\gamma_2)},
    \end{align*} 
    Here \( C_\varepsilon \) is defined in~\eqref{eq: cbeta*} and does not depend on \( \beta\) nor \( \kappa,\) and \( \dist(\gamma_1,\gamma_2)\) is the \( \ell_0 \)-distance between the supports of \( \gamma_1 \) and \( \gamma_2.\)
\end{theorem}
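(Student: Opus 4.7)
The plan is to combine a gauge-fixing step with a convergent polymer cluster expansion and let the locality of insertions do the rest. First, I would fix unitary gauge: the gauge transformation $(\sigma,\phi)\mapsto (\sigma-d\phi,0)$ preserves $\mu_{N,\beta,\kappa}$ and every Wilson line observable, and reduces the model on $\Omega^1(B_N,\mathbb{Z}_2)\times\Omega^0(B_N,\mathbb{Z}_2)$ to a pure $\mathbb{Z}_2$ gauge theory with an external-field term $-\kappa\sum_e \tr\rho(\sigma(e))$, while turning each Wilson line into $W_\gamma(\sigma,0) = \prod_{e\in\gamma}(-1)^{\sigma(e)}$, so that open paths and closed loops are handled uniformly.

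Second, I would set up a polymer representation about the ground state $\sigma\equiv 0$. Writing $A = \{e : \sigma(e)=1\}$, the unnormalized weight of $\sigma$ is proportional to $\exp(-2\kappa |A| - 2\beta N_p(A))$, where $N_p(A)$ is the number of plaquettes with $d\sigma \equiv 1 \pmod 2$. Declaring the polymers to be the connected excitations in the edge-plaquette incidence graph (in the spirit of~\cite{fv2017}), and absorbing the Wilson-line sign $(-1)^{\sigma(\gamma)}$ into the activities of the polymers whose support meets $\gamma$, produces a polymer system whose activities decay exponentially in polymer size, uniformly in $\beta\geq 0$ (since $\beta\geq 0$ only improves the upper bound on the weight). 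For $\kappa\geq\kappa_0^{(\text{Higgs})}$ the Koteck\'y--Preiss condition holds, so the abstract polymer cluster expansion of~\cite{fv2017} converges absolutely and gives
\[ \log\langle W_\gamma\rangle_{\beta,\kappa} = \sum_{C}\Psi_\gamma(C), \]
where $C$ ranges over clusters of polymers and crucially $\Psi_\gamma(C) = \Psi_\emptyset(C)$ whenever $\support(C)\cap\gamma = \emptyset$.

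Third, subtracting the three cluster expansions for $\gamma_1+\gamma_2$, $\gamma_1$ and $\gamma_2$, every cluster meeting only one of the two paths cancels exactly, leaving
\[ \log\langle W_{\gamma_1+\gamma_2}\rangle - \log\langle W_{\gamma_1}\rangle - \log\langle W_{\gamma_2}\rangle = \sum_{C}\bigl(\Psi_{\gamma_1+\gamma_2}(C) - \Psi_{\gamma_1}(C) - \Psi_{\gamma_2}(C)\bigr), \]
where the remaining sum is over clusters $C$ whose support meets both $\gamma_1$ and $\gamma_2$. Each such cluster is connected and has $\ell_0$-diameter at least $\dist(\gamma_1,\gamma_2)$. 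Standard exponential tails for convergent cluster expansions then bound, for each edge $e\in\support\gamma_1$, the contribution of clusters through $e$ reaching $\gamma_2$ by $C_\varepsilon\, e^{-4(\kappa-\kappa_0-\varepsilon)\dist(\gamma_1,\gamma_2)}$; summing over the $|\support\gamma_1|$ anchoring edges gives the same bound times $|\support\gamma_1|$ for the log-difference. Since $|\langle W_\gamma\rangle|\leq 1$, the elementary inequality $|e^x-1|\leq 2|x|$ valid for $|x|\leq 1$ converts this estimate into the claimed multiplicative bound on $|\langle W_{\gamma_1+\gamma_2}\rangle - \langle W_{\gamma_1}\rangle\langle W_{\gamma_2}\rangle|$ after adjusting $C_\varepsilon$.

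The main obstacle will be the careful setup of the polymer representation: choosing the polymer support (excited edges together with their plaquette cocycle) so that activities factorize cleanly over polymers, ensuring the single-polymer activity bound holds uniformly in $\beta\geq 0$ so that the threshold $\kappa_0^{(\text{Higgs})}$ depends only on $m$, and matching the sharp constant $C_\varepsilon$ of~\eqref{eq: cbeta*} with the decay rate $4(\kappa-\kappa_0-\varepsilon)$. The last point is where the $\mathbb{Z}_2$ structure enters in an essential way and requires a genuine combinatorial estimate rather than the cruder bound one obtains by simply dominating the plaquette factor by $1$.
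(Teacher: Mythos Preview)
Your proposal is correct and follows essentially the same route as the paper: unitary gauge (Lemma~\ref{lemma: unitary gauge}), the polymer cluster expansion of Section~\ref{sec: cluster expansions higgs}, the factorization \((\rho(\mathcal{S}(\gamma_1))-1)(\rho(\mathcal{S}(\gamma_2))-1)\) in the log-difference (Lemma~\ref{lemma: ratio sum}) which leaves only clusters intersecting both paths, and the tail bound of Lemma~\ref{lemma: new upper bound} after anchoring at edges of \(\gamma_1\). The only cosmetic difference is in the final conversion step: the paper uses \(|a-b|\leq|\log a-\log b|\) for \(a,b\in(0,1]\) directly (proof of Lemma~\ref{lemma: covariance expansion}), which avoids the case split your \(|e^x-1|\leq 2|x|\) inequality would require when \(|x|>1\) and delivers the constant \(C_\varepsilon\) of~\eqref{eq: cbeta*} without adjustment.
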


\begin{remark}
	 The proof of~Theorem~\ref{theorem: correlation} can easily be adapted to show that the covariance decays at most exponentially also in the confinement phase and the free phase. However, we note that in the free phase such a result would not be sharp since the rate of decay in the free phase is believed to be exponential with polynomial corrections.
\end{remark}

\subsection{Relation to other work}

The main result of~\cite{f2022b} gives the asymptotic decay rate of Wilson loops and lines \( \gamma \) in the Higgs phase under the two additional assumptions that  \( 6\beta>\kappa > \kappa_0\)  and   \( {|\support \gamma|e^{-24\beta-4\kappa} \ll \infty}.\) In~\cite{flv2022}, similar results are given in the confinement phase. However, we are aware of no results about the decay of Wilson lines in the free phase, other than the well known universal lower bound \( (\tanh 2\kappa)^{|\gamma|}. \) 
For Wilson loops, several papers contain similar results, see e.g.~\cite{a2021, sc2019,flv2020}.
In all papers mentioned above, assumptions are made on the parameters so that at least one of them tend to either infinity or zero as \( |\gamma| \) grows.
As a consequence, these results cannot be applied to deduce anything about the Marcu-Fredenhagen parameter or similar ratios since this limit involves letting \( |\gamma|\) tend to infinity while keeping the parameters \( \beta \) and \( \kappa\) fixed.

In this paper, we use high temperature expansions and cluster expansions for the Higgs phase, the confinement phase, and the free phase respectively.
The cluster expansions are special cases of the cluster expansion presented in~\cite{fv2017}. 
In~\cite{fv2023}, we used a similar cluster expansion for a lattice gauge theory, but there only the case \( \kappa=0\) was considered.  This expansion is similar to the expansion we use here in the Higgs phase. However, in both the confinement and the Higgs phase we here first use high temperature expansions, while the free phase requires additional work when setting up the cluster expansion.
 
In the mathematical literature, the decay of correlations in lattice gauge theories has been studied in~\cite{ac2022} and~\cite{f2022}. In both of these papers, only observables consisting of combinations of Wilson loops were considered, and the proofs rely on couplings and giving upper bounds on events describing the vortices in the model. The proof method here is very different from this approach, and yields a much shorter proof. In~\cite{f2022,ac2022}, decay of correlations was proven for any finite structure group and any finite abelian structure group respectively. In this paper, for simplicity, we only give a proof for the structure group \( \mathbb{Z}_2,\) but the same ideas should with some work be possible to translate to any finite abelian structure group.

\subsection{Structure of paper}

In Section~\ref{section: preliminaries}, we introduce the notation and definitions we will use throughout the rest of the paper. 
Section~\ref{section: higgs}, Section~\ref{section: confinement}, and Section~\ref{section: free} contains our results for the three conjectured phases of the model; the Higgs phase, the confinement phase, and the free phase respectively.

Section~\ref{section: higgs} contains the relevant expansion and the proofs of our main results in the Higgs phase. 
In Section~\ref{sec: cluster expansions higgs}, we present the cluster expansion we will use to prove our main result. We also use this cluster expansion to express the Marcu-Fredenhagen ratio and covariance in terms of the cluster expansion.
In Section~\ref{sec: upper bounds higgs}, we give upper bounds of natural events in terms of the cluster expansion.
In Sections~\ref{sec: proof of decay} and~\ref{section: proof of main result higgs}, we give proofs of Theorem~\ref{theorem: mf ratio Higgs} and Theorem~\ref{theorem: correlation}.

Section~\ref{section: confinement} contains the relevant expansions and  proofs of our main results in the confinement phase. 
In Section~\ref{sec: high temperature expansion confinement}, we present a high temperature expansion in both parameters which will be useful in this phase.
In Section~\ref{section: cluster confinement}, we present a cluster expansion of the model obtained from the high temperature expansion.
Section~\ref{sec: upper bounds confinement} contains a upper bounds which will be useful in the proof of the main result of this section, and, finally, Section~\ref{section: proof of main result confinement} contains the proof of Theorem~\ref{theorem: mf ratio confinement}.

Section~\ref{section: free} contains the relevant expansions and  proofs of our main results in the free phase.
In Section~\ref{sec: high temperature expansion free}, we present a high temperature expansion in \( \kappa \) which will be useful in this phase.
In Section~\ref{sec: cluster expansions free}, we present a cluster expansion of a model related to the model obtained from the high temperature expansion.
In Section~\ref{section: upper bounds free}, we give upper bounds of natural events in terms of the cluster expansion.
Finally, Section~\ref{section: proof of main result free} contains the proof of Theorem~\ref{theorem: mf ratio free}.

\subsection{Acknowledgements}
The author is grateful Fredrik Viklund for many interesting discussions, and also for comments on an earlier version of this manuscript.

\section{Preliminaries}\label{section: preliminaries}

Even though we later work with $G = \mathbb{Z}_2$, in this section we allow $G$ to be a general finite abelian group since this entails no additional work. We assume that a one-dimensional unitary representation of \( G\) has been fixed.

\color{black}

\subsection{Discrete exterior calculus}

Below we present the notation from discrete exterior calculus that we need in this paper.
In order to keep the background section of this paper short, and since these definitions have appeared in several recent papers, we will refer the reader to~\cite{flv2023} for further details.

\begin{itemize}
\item  We will work with the square lattice $\mathbb{Z}^m$, where we assume that the dimension $m \ge 3$ throughout. We write $B_N = [-N,N]^m \cap \mathbb{Z}^m$. Since \( m\) will always be fixed, we suppress the dependency on \( m\) in this notation.
\item{For \( k=0,1,\dots, m,\) write $C_k(B_N)$ and $C_k(B_N)^+$ for the set of unoriented and positively oriented $k$-cells, respectively (see \cite[Sect. 2.1.2]{flv2020}).}
\item{Formal sums of positively oriented \( k \)-cells with integer coefficients are called $k$-chains, and the space of $k$-chains is denoted by \( C_k(B_N,\mathbb{Z}) \), 
(see \cite[Sect. 2.1.2]{flv2020})}
\item{Let \( k \geq 2 \) and \( c = \frac{\partial}{\partial x^{j_1}}\big|_a \wedge \dots \wedge \frac{\partial}{\partial x^{j_k}}\big|_a \in C_k(B_N)\). The \emph{boundary} of $c$ is the $(k-1)$-chain \(\partial c \in C_{k-1}(B_N, \mathbb{Z})\) defined as the formal sum of the \( (k-1)\)-cells in the (oriented) boundary of \( c.\) 
The definition is extended to $k$-chains by linearity. See \cite[Sect.~2.1.4]{flv2020}.}

\item If \( k \in \{ 0,1, \ldots, n-1 \} \) and \( c \in C_k(B_N)\) is an oriented \( k \)-cell, we define the \emph{coboundary} \( \hat \partial c \in C_{k+1}(B_N)\) of \( c \) as the \( (k+1) \)-chain $\hat \partial c \coloneqq \sum_{c' \in C_{k+1}(B_N)} \bigl(\partial c'[c] \bigr) c'.$ See \cite[Sect.~2.1.5]{flv2020}.

\item We let $\Omega^k(B_N, G)$ denote the set of $G$-valued (discrete differential) $k$-forms (see \cite[Sect 2.3.1]{flv2020}); the exterior derivative $d : \Omega^k(B_N, G)  \to \Omega^{k+1}(B_N, G)$ is defined for $0 \le k \le m-1$ (see \cite[Sect. 2.3.2]{flv2020}).

\item We write $\support \omega = \{c \in C_k(B_N): \omega(c) \neq 0\}$ for the support of a $k$-form $\omega$. Similarly, we write $(\support \omega)^+ = \{c \in C_k(B_N)^+: \omega(c) \neq 0\}.$

\item
    A 1-chain \( \gamma \in C_1(B_N,\mathbb{Z}) \) with finite support \( \support \gamma \) is called a \emph{path} if 
    for all \( e \in C_1(B_N) \), we have that \( \gamma[e] \in \{ -1,0,1 \}. \) We write \( |\gamma| = |\support \gamma|,\) and let \( \Lambda_0 \subseteq C^1(B_N,\mathbb{Z}) \) be the set of all paths.
    A path \( \gamma \) is said to be a \emph{closed path} or a \emph{loop} if  \( \partial \gamma = 0. \) 
    A path \( \gamma \) is said to be an \emph{open path} if \( |\partial \gamma| = 2. \) 
    
    \item When \( \gamma_1\) and \( \gamma_2\) are two paths, we let \( \dist(\gamma_1, \gamma_2)\) be \( \ell_0\)-distance between \( \support \gamma_1\) and \( \support \gamma_2.\) Equivalently, \( \dist(\gamma_1, \gamma_2)\) is the length of the shortest path that connects the supports of \( \gamma_1\) and \( \gamma_2.\)
\end{itemize}

\subsection{Unitary gauge}\label{sec: unitary gauge} 
In this section, we recall the notion of gauge transforms, and describe how these can be used to rewrite the Wilson line expectation as an expectation with respect to a slightly simpler probability measure. For more details on gauge transforms and unitary gauge, we refer the reader to~\cite{f2022b}.
 
For \( \eta \in \Omega^0(B_N,G) \), let the map
\[
    \tau \coloneqq \tau_\eta \coloneqq \tau_\eta^{(1)} \times \tau_\eta^{(2)} \colon \Omega^1(B_N,G)  \times \Omega^0(B_N,G)  \to  \Omega^1(B_N,G) \times \Omega^0(B_N,G) 
\]
be defined by
\begin{equation}\label{eq: gauge transform}
    \begin{cases}
     \sigma(e) \mapsto -\eta(x) +\sigma(e) + \eta(y), & e=(x,y)\in C_1(B_N), \cr
     \phi(x) \mapsto  \phi(x) + \eta(x), & x \in C_0(B_N),
    \end{cases}
\end{equation}
where \( \sigma \in \Omega^1(B_N,G) \) and \( \phi \in \Omega^0(B_N,G). \) 
A map \( \tau \) of this form is called a \emph{gauge transformation}, and functions \( f: \Omega^1(B_N,G) \times \Omega^0(B_N,G)  \to \mathbb{C} \) which are invariant under such mappings in the sense that \(f= f \circ \tau\) are said to be \emph{gauge invariant}. 

For \( \beta, \kappa \geq   0 \) and  \( \sigma \in \Omega^1(B_N,G) \), we  define the probability measure 
\begin{equation}\label{eq: fixed length unitary measure}
    \mu_{N,\beta,\kappa}^{(U)}(\sigma) 
    \coloneqq
    (Z_{N,\beta,\kappa}^{(U)})^{-1}\exp\pigl(\beta \sum_{p \in C_2(B_N)}      \rho\bigl(d  \sigma(p)\bigr) + \kappa \sum_{e \in C_1(B_N)}   \rho \bigl( \sigma(e)\bigr)\pigr)  ,
\end{equation} 
where \( Z_{N,\beta,\kappa}^{(U)} \) is a normalizing constant. We let \( \mathbb{E}^{(U)}_{N,\beta,\kappa} \) denote the corresponding expectation.

The following lemma, which is considered well-known in the physics literature, will be crucial in the analysis of the lattice Higgs model.

\begin{lemma}[Corollary 2.17 in~\cite{f2022b}]\label{lemma: unitary gauge}
    Let \( \beta \geq 0,\) \( \kappa \geq 0 \), and let \( \gamma \) be a path in \( C_1(B_N) \). Then
    \begin{equation*}
        \mathbb{E}_{N,\beta,\kappa}\bigl[W_\gamma(\sigma,\phi)\bigr] =
        \mathbb{E}^{(U)}_{N,\beta,\kappa}\bigl[W_\gamma(\sigma,1)\bigr] =
        \mathbb{E}^{(U)}_{N,\beta,\kappa}\pigl[\rho\bigl(\sigma(\gamma)\bigr)\pigr]. 
    \end{equation*} 
\end{lemma}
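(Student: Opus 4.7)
The plan is to exploit the gauge invariance of both the Boltzmann weight $e^{-S_{N,\beta,\kappa}(\sigma,\phi)}$ and the Wilson line observable $W_\gamma(\sigma,\phi)$, and then perform a change of variables that eliminates the Higgs field entirely, reducing the expectation to one in the Higgs-free (``unitary gauge'') measure $\mu^{(U)}_{N,\beta,\kappa}$.

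First I would verify gauge invariance carefully. For $e=(x,y)\in C_1(B_N)$, the gauge transform in \eqref{eq: gauge transform} sends $\sigma(e)\mapsto\sigma(e)+d\eta(e)$ and $\phi(x)\mapsto\phi(x)+\eta(x)$, so $\phi(\partial e)\mapsto \phi(\partial e)+d\eta(e)$. Hence $\sigma(e)-\phi(\partial e)$ is unchanged by $\tau_\eta$, and $d\sigma(p)=\sigma(\partial p)$ is unchanged because $d\eta(\partial p) = \eta(\partial\partial p)=0$. Summing over $e\in\gamma$ and telescoping gives $d\eta(\gamma)=\eta(\partial\gamma)$, which shows that $\sigma(\gamma)-\phi(\partial\gamma)$ and therefore $W_\gamma(\sigma,\phi)$ is gauge invariant. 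Consequently both the action $S_{N,\beta,\kappa}$ and the observable are invariant under $\tau_\eta$ for every $\eta\in\Omega^0(B_N,G)$.

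Second, I would do the explicit change of variables. For each fixed $\phi$, the map $\sigma\mapsto\sigma+d\phi$ is a bijection of $\Omega^1(B_N,G)$. Applying it inside the sum,
\begin{equation*}
    d(\sigma+d\phi)(p) = d\sigma(p), \qquad (\sigma+d\phi)(e)-\phi(\partial e) = \sigma(e),
\end{equation*}
since $d\phi(e)=\phi(\partial e)$. Thus $S_{N,\beta,\kappa}(\sigma+d\phi,\phi)$ has no $\phi$-dependence and equals $-\beta\sum_p\tr\rho(d\sigma(p))-\kappa\sum_e\tr\rho(\sigma(e))$, i.e., the exponent in~\eqref{eq: fixed length unitary measure}. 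Similarly, the telescoping identity $d\phi(\gamma)=\phi(\partial\gamma)$ implies $W_\gamma(\sigma+d\phi,\phi)=\rho(\sigma(\gamma))$.

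Third, I would assemble the conclusion. After the substitution, the summand in both numerator and denominator of $\mathbb{E}_{N,\beta,\kappa}[W_\gamma(\sigma,\phi)]$ is independent of $\phi$, so summing over $\phi\in\Omega^0(B_N,G)$ contributes the common factor $|G|^{|C_0(B_N)^+|}$ which cancels in the ratio. What remains is exactly $\mathbb{E}^{(U)}_{N,\beta,\kappa}[\rho(\sigma(\gamma))]$, and recognising $\rho(\sigma(\gamma))=W_\gamma(\sigma,\mathbf{1})$ for the trivial Higgs field gives both claimed equalities. There is no real obstacle here; the only point requiring care is keeping track of the additive-versus-multiplicative conventions for $G$ (so that ``$\phi=1$'' is understood as the identity element) and confirming that the bijection $\sigma\mapsto\sigma+d\phi$ preserves the antisymmetry condition defining $\Omega^1(B_N,G)$, which it does because $d\phi(-e)=-d\phi(e)$.
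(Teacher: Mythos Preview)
Your argument is correct and follows the same gauge-fixing idea as the paper's sketch: both eliminate the Higgs field by the gauge transformation $\tau_{-\phi}$. Your formulation is slightly cleaner than the paper's: by organizing the change of variables as the bijection $\sigma\mapsto\sigma+d\phi$ on $\Omega^1(B_N,G)$ for each fixed $\phi$, you make the $\phi$-independence manifest and get the common factor $|\Omega^0(B_N,G)|$ directly, whereas the paper phrases the same step as a $k$-to-$1$ map on pairs and invokes the Poincar\'e lemma to justify the multiplicity; that extra ingredient is not actually needed for the conclusion.
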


The main idea of the proof of Lemma~\ref{lemma: unitary gauge} is to perform a change of variables, where we for each pair \( (\sigma,\phi) \) apply the gauge transformation \( \tau_{-\phi}, \) thus mapping \( \phi \) to \( 0 \). This maps \( \mu_{N,\beta,\kappa}\) to \( \mu^{(U)}_{N,\beta,\kappa} \) and \( W_\gamma(\sigma,\phi)\) to \( W_\gamma(\tau_\phi \sigma,1).\) Using the Poincaré lemma (see, e.g.~\cite[Lemma 2.2]{c2019}), one can show that this map is \( k \)-to-\(1\) for some \( k \in \mathbb{N} \) that depends on \( N, \) and from this the conclusion of the lemma follows.  
After having applied this gauge transformation, we say we are working in \emph{unitary gauge}.

We point out that Lemma~\ref{lemma: unitary gauge} holds more generally also for 1-chains in the sense that for any 1-chain \( c \in C_1(B_N), \) one has 
    \begin{equation*}
        \mathbb{E}_{N,\beta,\kappa}\pigl[ \rho \bigl( \sigma(c) -\phi(\partial c)\bigr) \pigr] = 
        \mathbb{E}^{(U)}_{N,\beta,\kappa}\pigl[\rho\bigl(\sigma(c)\bigr)\pigr]. 
    \end{equation*} 
    However, this more general version of Lemma~\ref{lemma: unitary gauge} will not be used in this paper.

With the current section in mind, we will work with \( \sigma \sim \mu_{N,\beta, \kappa}^{(U)} \) rather than with \( {(\sigma,\phi) \sim \mu_{N,\beta,\kappa}}\) throughout the rest of this paper, together with the observable
\begin{equation*}
    W_\gamma(\sigma) \coloneqq W_\gamma(\sigma,1) = \prod_{e \in \gamma} \rho\bigl(\sigma(e)\bigr) = \rho(\sigma(\gamma)).
\end{equation*}

\subsection{Existence of the infinite volume limit}\label{sec: ginibre}

In this section, we recall a result which shows existence and translation invariance of the infinite volume limit \( \langle W_\gamma(\sigma,\phi) \rangle_{\beta,\kappa} \) defined in the introduction. This result is well-known, and is often mentioned in the literature as a direct consequence of the Ginibre inequalities. A full proof of this result in the special case \( \kappa = 0 \) was included in~\cite{flv2020}, and the general case can be proven completely analogously, hence we omit the proof here and the refer the reader to~\cite{flv2020}. 

\begin{proposition}\label{proposition: limit exists}
    Let \( G = \mathbb{Z}_n \), \( M \geq 1 \), and let \( f \colon \Omega^1(B_M,G) \times \Omega^0(B_M,G) \to \mathbb{R}\).
    For \( M' \geq M \), we abuse notation and let \( f \) denote the natural extension of \( f \) to \( \Omega^1(B_{M'},G) \times \Omega^0(B_{M'},G) \), i.e., the unique function such that \( f(\sigma) = f(\sigma|_{C_1(B_M)},\phi|_{C_0(B_M)}) \) for all \( (\sigma,\phi) \in \Omega^1(B_{M'},G) \times \Omega^0(B_{M'},G) \).
    Further, let \( \beta \in [0, \infty] \) and \( \kappa \geq 0 \). Then the limit \( \langle  f(\sigma,\phi)\rangle_{\beta,\kappa} = \lim_{N \to \infty} \mathbb{E}_{N,\beta,\kappa} \bigl[ f(\sigma,\phi) \bigr] \) exists and is translation invariant. 
\end{proposition}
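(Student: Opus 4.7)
The plan is to extend the argument of~\cite{flv2020} for the case $\kappa=0$ to the full gauge--Higgs action. The key input is the Ginibre inequality, which supplies monotonicity of a distinguished class of expectations as the box $B_N$ grows.

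First I would reduce to characters. Since $G=\mathbb{Z}_n$ is a finite abelian group and $f$ is a real-valued function on the finite configuration space $\Omega^1(B_M,G)\times\Omega^0(B_M,G)$, Fourier decomposition on the product group $G^{|C_1(B_M)|+|C_0(B_M)|}$ writes $f$ as a finite real linear combination of monomials of the form $\prod_{e} \rho(\sigma(e))^{a_e}\prod_{v} \rho(\phi(v))^{b_v}$, where $\rho$ is the fixed one-dimensional representation and $a_e,b_v\in\mathbb{Z}$. By linearity it then suffices to prove the existence and translation invariance of the limit for each such monomial.

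Second, following~\cite{flv2020}, I would invoke the Ginibre inequality in the form applicable to $\mathbb{Z}_n$-valued systems with ferromagnetic character interactions in order to show that, for any such monomial, $N\mapsto\mathbb{E}_{N,\beta,\kappa}[f]$ is monotone. Concretely, the additional plaquette terms $\beta\tr\rho(d\sigma(p))$ with $p\in C_2(B_{N+1})\setminus C_2(B_N)$ and the additional Higgs terms $\kappa\tr\rho(\sigma(e)-\phi(\partial e))$ with $e\in C_1(B_{N+1})\setminus C_1(B_N)$ that appear when enlarging the box can be expanded, via finite $\mathbb{Z}_n$ Fourier duality, as a mixture of ferromagnetic couplings with non-negative weights. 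Ginibre's inequality then yields that the expectation is monotone in $N$, and combined with the trivial uniform bound $|\mathbb{E}_{N,\beta,\kappa}[f]|\leq 1$ this gives convergence as $N\to\infty$.

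Third, for translation invariance I would fix a lattice translation $\tau$ and observe that $\mathbb{E}_{N,\beta,\kappa}[f\circ\tau]=\mathbb{E}_{\tau(B_N),\beta,\kappa}[f]$, where $\tau(B_N)$ denotes the translated box, by the manifest translation symmetry of the finite-volume action on any box. For large $N$, $\tau(B_N)$ is sandwiched between $B_{N-\|\tau\|_\infty}$ and $B_{N+\|\tau\|_\infty}$, so the monotonicity from the previous step forces the expectations on all three boxes to share the same limit. The main obstacle is setting up the Ginibre monotonicity framework for the joint gauge--Higgs system: this reduces to checking that, after character expansion, the induced effective couplings are ferromagnetic. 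Since~\cite{flv2020} verifies this for $\kappa=0$ and the Higgs term $\kappa\tr\rho(\sigma(e)-\phi(\partial e))$ has exactly the same ferromagnetic character structure as a plaquette term, the extension is routine and no new conceptual difficulty arises, which is precisely why the authors omit the full proof.
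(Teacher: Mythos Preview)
Your proposal is correct and matches the paper's own treatment: the paper omits the proof entirely, stating that it is a direct consequence of the Ginibre inequalities and that the argument in~\cite{flv2020} for \(\kappa=0\) carries over analogously. Your sketch---Fourier reduction to character monomials, Ginibre monotonicity in the box, and a sandwiching argument for translation invariance---is precisely that analogous extension, and you correctly identify why no new difficulty arises from the Higgs term.
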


\subsection{The activity}\label{sec: activity}

For \( a\geq 0 \) and \( g \in G ,\) we set
\begin{equation*}
    \phi_{a}(g) \coloneqq  e^{a \real  (\rho(g)-\rho(0))} .
\end{equation*} 
Since \( \rho \) is unitary, for any \( g \in G \) we have \( \rho(g) = \overline{\rho(-g)} \), and hence \( \real  \rho(g) = \real  \rho(-g) \). In particular,  for any \( g \in G \) 
\begin{equation} \label{eq: phi is symmetric}
    \phi_{a}(g)
    =
    e^{ a (\real  \rho(g)-\rho(0))  }
    =
    e^{a (\real  \rho(-g)-\rho(0)) }
    =
    \phi_{a}(-g).
\end{equation}
For \( \sigma \in \Omega^1(B_N,G) \) and \( \beta,\kappa \geq 0 \) we define the \emph{activity} of $\sigma$ by
\begin{equation*}
    \phi_{\beta,\kappa}(\sigma) \coloneqq \prod_{p \in C_2(B_N)}\phi_\beta\bigl(d\sigma(p)\bigr) \prod_{e \in C_1(B_N)}\phi_\kappa\bigl(\sigma(e)\bigr).
\end{equation*}

Note that for \( \sigma \in \Omega^1(B_N,G), \) the probability measure corresponding to the Wilson action lattice gauge theory can be written an
\begin{equation}\label{eq: mubetakappaphi}
    \mu^{(U)}_{N,\beta,\kappa}(\sigma) = \frac{\phi_{\beta,\kappa} (\sigma)}{\sum_{\sigma \in \Omega^1(B_N,G)} \phi_{\beta,\kappa}(\sigma)}.
\end{equation}
Moreover, in the case  $G = \mathbb{Z}_2,$ for \( \sigma \in \Omega^1(B_N,\mathbb{Z}_2)\) we have
\begin{equation*}
    \begin{split}
        &\phi_{\beta,\kappa}(\sigma) = \prod_{p \in C_2(B_N)} \phi_{\beta}\bigl( d\sigma(p) \bigr)\prod_{e \in C_1(B_N)} \phi_{\kappa}\bigl( \sigma(e) \bigr) 
        \\&\qquad= \prod_{p \in C_2(B_N)} e^{-2\beta \mathbb{1}\bigl( d\sigma(p) = 1\bigr)}\prod_{e \in C_1(B_N)} e^{-2\kappa \mathbb{1}( \sigma(e) = 1)}
        \\&\qquad=
        e^{-2\beta \sum_{p \in C_2(B_N)} \mathbb{1}( d\sigma(p) = 1)}
        e^{-2\kappa \sum_{e \in C_1(B_N)} \mathbb{1}( \sigma(e) = 1)}
        \\&\qquad=
        e^{-2\beta |\support d\sigma|}e^{-2\kappa |\support \sigma|}.
    \end{split}
\end{equation*}
We note that, by definition, if \( \sigma,\eta \in \Omega^1(B_N,G),\) \( \support \eta \subseteq \support \sigma, \) \( \sigma|_{\support \eta} = \eta \)  and  \( (d\sigma)|_{\support d\eta} = d\eta,\) then \[ \phi_{\beta,\kappa} (\sigma) = \phi_{\beta,\kappa}(\eta) \phi_{\beta,\kappa}(\sigma-\eta).\]

\subsection{Additional notation}\label{section: notation}

Let \( D_0 = D_0(m) \) be a universal constant such that for any \( e \in C_1(B_N)^+ \) and any \( j \geq 1, \) there are at most \( D_0j^{m-1} \) positively oriented plaquettes at distance \( j \) from \( e. \)

When \( (\gamma_1^{(n)})_{n\geq 1} \) and \( (\gamma_2^{(n)})_{n\geq 1} \) are as in Figure~\ref{figure: U Wilson line}, we let
\begin{equation}
	\Gamma_n \coloneqq \bigl\{ 0,\gamma_1^{(n)},\gamma_2^{(n)},\gamma_1^{(n)}+\gamma_2^{(n)} \bigr\}.
\end{equation}

Let \( \mathcal{G}_0\) be the graph with vertex set \(  C_1(B_N)^+\) and an edge between two distinct edges \( e_1,e_2 \in C_1(B_N)^+ \) if \( \support  \partial e_1 \cap \support  \partial e_2 \neq \emptyset\) (written \( e_1 \sim e_2\)).
Note that any \( e \in C_1(B_N)^+ \) has degree at most \( M_0 \coloneqq 4m-1\) in \( \mathcal{G}_0.\)
When \( \gamma \in \Lambda_0,\) we let \( \mathcal{G}_0 (\gamma)\) be the subgraph of \( \mathcal{G}_0 \) induced by \( \support \gamma. \)
We say that a path \( \gamma \in \Lambda_0 \) is connected if  \( \mathcal{G}_1(\gamma) \) is a connected graph, and let
\begin{equation*}
	\Lambda_1 \coloneqq \bigl\{ \gamma \in \Lambda_0 \colon \gamma \text{ is connected} \bigr\}.
\end{equation*}

\section{The Higgs phase (\( \kappa\) large)}\label{section: higgs}

\subsection{A cluster expansion}\label{sec: cluster expansions higgs}

In this section we describe a cluster expansion for the lattice Higgs model on a finite box $B_N$ in the Higgs phase. The material here is for the most part well-known and is a natural special case of the cluster expansion as presented in~\cite{fv2017}. The expansion we use here is similar to the expansion in~\cite{fv2023, seiler} but uses polymers in the Higgs field instead of polymers in the gauge field. This is the reason that we need \( \kappa \geq \kappa_0^{\text{(Higgs)}}\) here instead of \( \beta \geq \beta_0 \) for some \( \beta_0>0\) as in~\cite{fv2023}.

Throughout this section and in the rest of the paper, we will assume that \( G = \mathbb{Z}_2.\)

\subsubsection{Polymers}\label{sec: the graphs}

Let  \( \mathcal{G}_1 \) be the graph with vertex set \( C_1(B_N)^+\) and an edge between two distinct vertices $e_1,e_2$ iff  $\support \hat \partial e_1 \cap \support \hat \partial e_2  \neq \emptyset$, i.e. if \( e_1\) and \( \pm e_2\) are both in boundary of some common plaquette.

Since any edge \( e \in C_1(B_N)^+\) in \( B_N\) is in the boundary of at most \( 2(m-1)\) plaquettes, and any such plaquettes has exactly three edges in its boundary that are not equal to \( e,\) it follows that there are at most 
	\begin{equation}\label{eq: M1}
		M_1 \coloneqq 3 \cdot 2(m-1) = 6(m-1)
	\end{equation} 
	edges \( e' \in C_1(B_N)^+\smallsetminus \{ e \} \) with \( {\support \hat \partial e \cap \support \hat \partial e' \neq \emptyset.}\) As a consequence, it follows that each vertex in \( \mathcal{G}_1 \) has degree at most \( M_1.\) 

The graph \( \mathcal{G}_1\) will be useful when we, in the following sections, describe the cluster expansion we will use in the Higgs phase. This graph is analog to the graph introduced in~\cite[Section 2.3]{fv2023} but has~\( C_1(B_N)^+\) as its vertex set instead of \( C_2(B_N)^+\) which was used in~\cite{fv2023}.

When \( \sigma \in \Omega^1(B_N,G),\) we let \( \mathcal{G}_1 (\sigma ) \) be the subgraph of \( \mathcal{G}_1 \) induced by \( (\support \sigma)^+. \)
We let \( \Lambda \) be the set of all \( \sigma \in \Omega^1(B_N,G)\) such that  \( \mathcal{G}_1 (\sigma )  \) has exactly one connected component. 
The spin configurations in \( \Lambda \) will be referred to as \emph{polymers}.

\subsubsection{Polymer interaction}

For \( \sigma,\sigma' \in \Lambda,\) we write \( \sigma \sim \sigma'\) if   \( \mathcal{G}_1(\sigma) \cup \mathcal{G}_1(\sigma')\) is a connected subgraph of \( \mathcal{G}_1.\)

In the notation of~\cite[Chapter 3]{fv2017}, the model given by~\eqref{eq: hat Z confinement} corresponds to a model of polymers with polymers described in~Section~\ref{sec: the graphs} and interaction function  \(  \iota(\sigma_1,\sigma_2) \coloneqq \zeta(\sigma_1,\sigma_2) +1 , \) where 
\begin{equation*}
	\zeta(\sigma_1,\sigma_2) \coloneqq  
	\begin{cases}  
		-1 &\text{if } \sigma_1,\sigma_2 \in \Lambda \text{ and } \sigma_1 \sim \sigma_2 \cr   
		0 &\text{else.} 
	\end{cases}
\end{equation*}

\subsubsection{Clusters of polymers}\label{section: clusters higgs}

Consider a multiset 
\begin{align*}
    &\mathcal{S} = \{ \underbrace{\eta_1,\dots, \eta_1}_{n_{\mathcal{S}}(\eta_1) \text{ times}}, \underbrace{\eta_2, \dots, \eta_2}_{n_{\mathcal{S}}(\eta_2) \text{ times}},\dots, \underbrace{\eta_k,\dots,\eta_k}_{n_{\mathcal{S}}(\eta_k) \text{ times}} \} = \{\eta_1^{n(\eta_1)}, \ldots, \eta_k^{n(\eta_k)}\}, 
\end{align*}
where \( \eta_1,\dots,\eta_k \in \Lambda  \) are distinct and $n(\eta)=n_{\mathcal{S}}(\eta)$ denotes the number of times $\eta$ occurs in~$\mathcal{S}$. Following \cite[Chapter 3]{fv2017}, we say that $\mathcal{S}$ is \emph{decomposable} if it is possible to partition $\mathcal{S}$ into disjoint multisets. That is, if there exist non-empty and disjoint multisets $\mathcal{S}_1,\mathcal{S}_2 \subset \mathcal{S}$ such that $\mathcal{S} = \mathcal{S}_1 \cup \mathcal{S}_2$ and such that for each pair $(\eta_1,\eta_2) \in \mathcal{S}_1 \times \mathcal{S}_2$,  $ \eta_1 \nsim \eta_2.$  
If $\mathcal{S}$ is not decomposable, we say that $\mathcal{S}$  is a \emph{cluster}. We stress that a cluster is unordered and may contain several copies of the same polymer. Given a cluster $\mathcal{S} $, we define
\begin{equation*}
    \|\mathcal{S} \|_1 = \sum_{\eta \in \Lambda } n_{\mathcal{S}} (\eta) \bigl| (\support \eta )^+ \bigr|,\quad 
    \|\mathcal{S} \|_2 = \sum_{\eta \in \Lambda } n_{\mathcal{S}} (\eta) \bigl| (\support d\eta )^+ \bigr| ,
\end{equation*}
\begin{equation*}
    n(\mathcal{S}) = \sum_{\eta \in \Lambda } n_{\mathcal{S}} (\eta),\quad \text{and} \quad
    \support \mathcal{S} = \bigcup_{\eta \in \mathcal{S}} \support \eta.
\end{equation*}
For a $1-$chain \( c \in C_1(B_N,\mathbb{Z}),\) we define 
\[ \mathcal{S}(c) = \sum_{\eta \in \Lambda } n_{\mathcal{S}}(\eta) \eta(c).\]

We let \( \Xi\) be the set of all clusters.

To simplify notation in the rest of this section, for \( e \in C_1(B_N)^+,\) \( \gamma \in C^1(B_N,\mathbb{Z}), \) and \( i ,j,k \geq 1 ,\) we define
\begin{equation*}
    \Xi_{i,j,k,e} \coloneqq \bigl\{ \mathcal{S} \in \Xi \colon 
    n(\mathcal{S}) = i,\, \| \mathcal{S}\|_1 = j,\, \| \mathcal{S}\|_2 = k,\,   e \in \support \mathcal{S} \bigr\},
\end{equation*}

\begin{equation*}
    \Xi_{i} \coloneqq \bigl\{ \mathcal{S} \in \Xi \colon 
    n(\mathcal{S}) = i \bigr\},
\end{equation*} 
\begin{equation*}
    \Xi_{i,j} \coloneqq \bigl\{ \mathcal{S} \in \Xi \colon 
    n(\mathcal{S}) = i,\, \|\mathcal{S}\|_1 = j \bigr\},
\end{equation*} 
\begin{equation*}
    \Xi_{i,j,e} \coloneqq \bigl\{ \mathcal{S} \in \Xi \colon 
    n(\mathcal{S}) = i,\, \|\mathcal{S}\|_1 = j, \, e \in \support \mathcal{S} \bigr\},
\end{equation*} 
and
\begin{equation*}
    \Xi_{i,j,\gamma} \coloneqq \bigl\{ \mathcal{S} \in \Xi \colon 
    n(\mathcal{S}) = i,\, \|\mathcal{S}\|_1 = j, \, \mathcal{S}(\gamma) \neq 0 \bigr\}.
\end{equation*} 
Further, we let
\begin{equation*}
    \Xi_{i^+ } \coloneqq \bigl\{ \mathcal{S} \in \Xi \colon 
    n(\mathcal{S}) \geq i\},
\end{equation*} 
and define \( \Xi_{i^+,j}, \) \( \Xi_{i,j^+}, \) \( \Xi_{i^+,j^+}, \) etc. analogously.

We note that the sets defined above depend on \( N\) but we usually suppress this in the notation. When we want to emphasize this dependence, we write \( \Xi(B_N), \) \( \Xi_i(B_N),\) \( \Xi_{i,j}(B_N), \) etc.

The following lemma gives an upper bound on the number of clusters of a given size and with a given edge in its support.

\begin{lemma}\label{lemma: Xi1ke bound}
For any \( k \geq 1\) and \( e \in C_1(B_N)^+,\) we have \( |\Xi_{1,k,e}| \leq M_1^{2k-2}.\)
\end{lemma}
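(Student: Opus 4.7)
The plan is to reduce the problem to the classical ``walks in a bounded-degree graph'' counting argument.

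\textbf{Step 1: Reformulate as a count of connected subgraphs.} Since $G = \mathbb{Z}_2$, any $\sigma \in \Omega^1(B_N, G)$ is uniquely determined by its support: on each edge of $\support \sigma$ the only non-zero value is $1$. Moreover, a cluster $\mathcal{S} \in \Xi_{1,k,e}$ has $n(\mathcal{S}) = 1$, so it consists of a single polymer $\eta \in \Lambda$, and the conditions $\|\mathcal{S}\|_1 = k$ and $e \in \support \mathcal{S}$ translate to $|(\support \eta)^+| = k$ and $e \in (\support \eta)^+$. Together with the defining property of $\Lambda$ that $\mathcal{G}_1(\eta)$ is connected, this means $|\Xi_{1,k,e}|$ equals the number of $k$-element subsets $V \subseteq C_1(B_N)^+$ such that $e \in V$ and the subgraph of $\mathcal{G}_1$ induced by $V$ is connected.

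\textbf{Step 2: Encode each such subset by a walk.} For each connected induced subgraph $V$ as above, fix (in some canonical way, e.g.\ by lexicographic order on edges) a spanning tree $T_V$ of $V$ rooted at $e$. Performing a depth-first traversal of $T_V$ starting at $e$ produces a walk $w_V$ in $\mathcal{G}_1$ of length exactly $2(k-1)$, since each of the $k-1$ tree edges is traversed exactly twice. The vertex set visited by $w_V$ is precisely $V$, so the assignment $V \mapsto w_V$ is injective.

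\textbf{Step 3: Bound the number of walks.} Each vertex of $\mathcal{G}_1$ has degree at most $M_1 = 6(m-1)$, as observed just after~\eqref{eq: M1}. Therefore the number of walks of length $2(k-1)$ in $\mathcal{G}_1$ starting at the fixed vertex $e$ is at most $M_1^{2(k-1)} = M_1^{2k-2}$. Combining with Step~2 yields the claimed inequality $|\Xi_{1,k,e}| \leq M_1^{2k-2}$.

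The argument is entirely standard; the only mild care needed is in Step~2, namely that the canonical choice of spanning tree and DFS traversal must be made so that $V$ is recoverable from $w_V$, which is automatic since $V$ is the set of vertices visited by $w_V$.
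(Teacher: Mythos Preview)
Your proof is correct and follows essentially the same approach as the paper: encode each connected induced subgraph containing $e$ by a walk in $\mathcal{G}_1$ obtained from a depth-first traversal of a spanning tree, then bound the number of such walks using the degree bound $M_1$. The paper phrases the walk as a ``spanning path of length $2k-1$'' (counting vertices visited) rather than your ``walk of length $2(k-1)$'' (counting steps), but the argument is the same.
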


\begin{proof}
    Let \( k \geq 1\) and \( e \in C_1(B_N).\)
    Let \( \mathcal{P}\) be the set of all paths in \( \mathcal{G}_1 \) that starts at \( e \) and have length \( 2k-1.\) Since each vertex in \( \mathcal{G}_1\) has degree at most \(M_1,\) we have  \( |\mathcal{P}| \leq M_1^{2k-2}.\) 

    For \( \{ \eta \} \in \Xi_{1,k,e},\) let \( G_\eta\) be the subgraph of \( \mathcal{G}_1 \) induced by the set \( (\support \eta)^+.\)
    Then \( G_\eta\) is connected, and hence \( G_\eta \) has a spanning path \( T_\eta \in \mathcal{P}\) of length \( 2k-1=2\bigl| (\support \eta)^+ \bigr|-1\) which starts at \( e .\) 
    Since the map \( \eta \mapsto T_\eta\) is an injective map from \( \Xi_{1,k,e}\) to \( \mathcal{P}\) and \( |\mathcal{P}| \leq  M_1^{2k-2},\) the desired conclusion immediately follows.
\end{proof}

\subsubsection{The activity of clusters}
  
  We extend the notion of activity from Section~\ref{sec: activity} to clusters \( \mathcal{S} \in \Xi\) by letting
\begin{equation*}
    \phi_{\beta,\kappa}(\mathcal{S}) = \prod_{\eta \in \mathcal{S}} \phi_{\beta,\kappa}(\eta)^{n_{\mathcal{S}}(\eta)} = e^{-4\beta \| \mathcal{S}\|_2-4\kappa \| \mathcal{S}\|_1}.
\end{equation*}

\subsubsection{Ursell functions}\label{sec: ursell 1}
In the cluster expansion in~\cite{fv2017}, which is valid for many different spin models, special functions known as Ursell functions play an important role. We define the Ursell function relevant for our setting below.


For \( k \geq 1,\) we let \( \mathcal{G}^k\) be the set of all connected graphs \( G\) with vertex set \( V(G) = \{ 1,2,\dots, k\}.\) Whenever \( G\) is a graph, we let \( E(G)\) be the (undirected) edge set of \( G.\)

\begin{definition}[The Ursell functions]\label{def:ursell}
    For \( k \geq 1 \) and  \( \eta_1,\eta_2,\dots , \eta_k \in \Lambda ,\) we let
    \begin{equation*}
        U(  \eta_1, \ldots, \eta_k  ) \coloneqq \frac{1}{k!} \sum_{G \in \mathcal{G}^k} (-1)^{|E(\mathcal{G} )|} \prod_{(i,j) \in E(\mathcal{G})} \mathbb{1}( \eta_i \sim \eta_j).
    \end{equation*} 
    Note that this definition is invariant under permutations of the polymers \( \eta_1,\eta_2,\dots,\eta_k.\)

   For \( \mathcal{S} \in \Xi_k,\) and any enumeration \( \eta_1,\dots, \eta_k \) (with multiplicities) of the polymers in \( \mathcal{S},\) we define
   \begin{equation}\label{eq: ursell functions} 
        U(\mathcal{S}) = k! \, U(\eta_1,\dots, \eta_k).
   \end{equation}
\end{definition} 
Note that for any \( \mathcal{S} \in \Xi_1,\) we have \( U(\mathcal{S})=1,\) and for any \( \mathcal{S} \in \Xi_2,\)  we have
\( U(\mathcal{S}) = -1. \)

\subsubsection{Cluster expansion of the partition function}
The partition function for the abelian lattice Higgs model with parameters \( \beta,\kappa \geq 0 \) is given by
\[
Z^{(U)}_{N,\beta,\kappa} =  \sum_{\sigma \in \Omega^1(B_N,G)} \phi_{\beta,\kappa}(\sigma). 
\]
Since \( N \) is finite, this is a finite sum.
An alternative representation of $ Z_{\beta,\kappa,N}$ is given by the \emph{cluster partition function} which is defined by the following (formal) expression:
\begin{equation}\label{eq: spin-cluster-partition}
     Z_{N,\beta,\kappa}^{*} = \exp\left(
    \sum_{\mathcal{S}\in \Xi} \Psi_{\beta,\kappa}(\mathcal{S})\right), 
\end{equation}
where for \( \mathcal{S} \in \Xi, \) we define
\begin{equation*}
    \begin{split}
        \Psi_{\beta,\kappa}(\mathcal{S}) \coloneqq U(\mathcal{S})
        \phi_{\beta,\kappa}(\mathcal{S}) 
    \end{split}
\end{equation*}
and $U$ is the Ursell function as defined in Section~\ref{sec: ursell 1}.

It is not obvious that the series in the exponent of~\eqref{eq: spin-cluster-partition} is convergent but this follows from the next lemma, assuming \( \kappa \) is sufficiently large, and we verify below that in this case $\log Z_{N,\beta,\kappa}^{(U)} = \log Z_{N,\beta,\kappa}^*$.  
In this lemma, the following constants will be used. Recalling the definition of \( M_1 \) from~\eqref{eq: M1}, we define
\begin{equation}\label{eq: kappa0} 
    \alpha = \alpha^{(\textrm{Higgs})}\coloneqq             \argmin_{\alpha'\in (0,1)}\frac{\log (M_1^2+1/\alpha' )}{4(1-\alpha')} 
    \quad \text{and} \quad 
    \kappa_0^{(\textrm{Higgs})} \coloneqq \frac{\log (M_1^2+1/\alpha )}{4(1-\alpha)}.
\end{equation}
 
\begin{lemma}\label{lemma: the assumption on a}
    Let  $\beta\geq 0$ and \( \kappa  > \kappa_0^{(\textrm{Higgs})}  . \) Then, for any \( \eta \in \Lambda, \) we have 
    \[
    \sum_{\mathcal{S} \in \Xi \colon \eta \in \mathcal{S}} \Psi_{\beta,\kappa}(\mathcal{S})
    \leq \bigl(  \phi_{0,\kappa}(\eta) \bigr)^{-\alpha}\phi_{\beta,\kappa}(\eta).
    \]
    Moreover, the series in \eqref{eq: spin-cluster-partition} is absolutely convergent. 
\end{lemma}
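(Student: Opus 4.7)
The plan is to apply a Kotecky--Preiss type convergence criterion for abstract polymer systems, which is a special case of the cluster-expansion framework in \cite{fv2017}. That criterion asserts that if one can exhibit a weight function $a : \Lambda \to [0,\infty)$ satisfying
\[
\sum_{\eta' \in \Lambda,\ \eta' \sim \eta} \phi_{\beta,\kappa}(\eta')\,e^{a(\eta')} \,\leq\, a(\eta) \qquad \text{for every } \eta \in \Lambda,
\]
then the per-polymer bound $\sum_{\mathcal{S} \ni \eta} |\Psi_{\beta,\kappa}(\mathcal{S})| \leq \phi_{\beta,\kappa}(\eta)\,e^{a(\eta)}$ holds and the series in \eqref{eq: spin-cluster-partition} converges absolutely. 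Guided by the right-hand side $(\phi_{0,\kappa}(\eta))^{-\alpha}\phi_{\beta,\kappa}(\eta)$ of the claim, the natural choice is
\[
a(\eta) \,:=\, -\alpha\log \phi_{0,\kappa}(\eta)\,=\,4\alpha\kappa\,\|\eta\|_1.
\]

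To verify the displayed condition I would first drop the $\beta$ dependence via $e^{-4\beta\|\eta'\|_2} \leq 1$, reducing to an estimate of $\sum_{\eta' \sim \eta} e^{-4\kappa(1-\alpha)\|\eta'\|_1}$. Next, enumerate $\eta' \sim \eta$ by an anchoring pair $(e,e')$ with $e \in (\support \eta)^+$ and $e' \in (\support \eta')^+$ either equal to or $\mathcal{G}_1$-adjacent to $e$. The degree bound in $\mathcal{G}_1$ gives at most $M_1 + 1$ admissible $e'$ per $e$, and Lemma~\ref{lemma: Xi1ke bound} furnishes at most $M_1^{2k-2}$ polymers $\eta'$ of size $k$ containing $e'$. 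Evaluating the resulting geometric series in $k$, which converges whenever $M_1^2 e^{-4\kappa(1-\alpha)} < 1$, yields a bound of the form $\|\eta\|_1 \cdot F(\kappa,\alpha)$; the definitions of $\alpha$ and $\kappa_0^{(\textrm{Higgs})}$ in~\eqref{eq: kappa0} are calibrated so that $F(\kappa,\alpha) \leq 4\alpha\kappa$ holds at the threshold $\kappa = \kappa_0^{(\textrm{Higgs})}$, and hence, by monotonicity of $F$ in $\kappa$, for all $\kappa \geq \kappa_0^{(\textrm{Higgs})}$. The absolute convergence of the cluster series then follows by summing the per-polymer bound over $\eta \in \Lambda$, with one further application of Lemma~\ref{lemma: Xi1ke bound} to enumerate polymers anchored at a fixed edge.

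The main technical obstacle is the combinatorial bookkeeping needed to match the precise threshold $\kappa_0^{(\textrm{Higgs})} = \log(M_1^2 + 1/\alpha)/(4(1-\alpha))$: producing the $+1/\alpha$ inside the logarithm appears to require a sharper decomposition than the naive $(M_1 + 1)$-based splitting into the $e' = e$ and $e' \sim_{\mathcal{G}_1} e$ contributions. Natural refinements are to treat polymers that share edges with $\eta$ separately from those only touching its $\mathcal{G}_1$-neighborhood, or to exploit $\|\eta'\|_1 \geq 1$ more carefully in the geometric sum. Apart from this tightening and checking the convention about whether $\eta$ itself enters the Kotecky--Preiss sum, the argument is a routine instance of the cluster expansion machinery of \cite{fv2017}.
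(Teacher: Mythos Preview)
Your approach is essentially identical to the paper's: same weight \(a(\eta)=-\alpha\log\phi_{0,\kappa}(\eta)\), same reduction to \(\sum_{\eta'\sim\eta}\phi_{0,\kappa}(\eta')^{1-\alpha}\) after dropping the \(\beta\)-dependence, same appeal to Lemma~\ref{lemma: Xi1ke bound} followed by a geometric sum, and the same invocation of \cite[Theorem~5.4]{fv2017}. Regarding your concern about the precise threshold: the paper obtains the constant \(M_1^2+1/\alpha\) by anchoring \(\eta'\) directly at some \(e\in(\support\eta)^+\cap(\support\eta')^+\), omitting your extra \((M_1+1)\) neighbor factor --- a step which, as you in effect notice, does not obviously cover those \(\eta'\sim\eta\) whose support is disjoint from but \(\mathcal{G}_1\)-adjacent to \(\support\eta\); so your more careful anchoring is arguably the correct one, at the cost of a slightly larger \(\kappa_0^{(\textrm{Higgs})}\).
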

\begin{proof}
Let $\alpha >0$ be such that \(\kappa  \geq \frac{\log (M_1^2+1/\alpha )}{4(1-\alpha)} .\) We will prove that for each \( \eta \in \Lambda \) we have 
    \begin{equation*}
        \sum_{\eta' \in \Lambda } |\phi_{\beta,\kappa}(\eta')| e^{\alpha| \support \eta'|} \mathbb{1}(\eta \sim \eta') \leq \alpha   \kappa |\support \eta|   = \alpha\log \phi_{0,\kappa}(\eta) .
    \end{equation*}
    Given this, the conclusion of the lemma follows from~\cite[Theorem~5.4]{fv2017} choosing the function $a(\eta) \coloneqq  -\alpha\log \phi_{0,\kappa}(\eta)$. 

    By the choice of \( \alpha,\) we have
    \begin{equation*}
        M_1^2 e^{-4\kappa (1-\alpha)}<1 
    \end{equation*}
    and
    \begin{equation*}
        \frac{e^{-4\kappa  (1-\alpha)}}{1-M_1^2e^{-4\kappa (1-\alpha)}} \leq \alpha 
        .
    \end{equation*}
    %
    \color{black} 
    Thus, for any \( \eta \in \Lambda ,\) we have 
    \begin{align*}
        &\sum_{\eta' \in \Lambda } \phi_{\beta,\kappa}(\eta') e^{a(\eta')} \mathbb{1}(\eta \sim \eta') 
        =
        \sum_{\eta' \in \Lambda  \colon \eta \sim \eta'} \phi_{\beta,\kappa}(\eta')   \phi_{0,\kappa}(\eta')^{-\alpha} 
        \leq 
        \sum_{\eta' \in \Lambda  \colon \eta \sim \eta'} \phi_{0,\kappa}(\eta')^{1-\alpha} 
        \\&\qquad \leq
        \sum_{e \in (\support \eta)^+} \sum_{ \{ \eta'\} \in \Xi_{1,1^+,e}} \phi_{0,\kappa}(\eta')^{1-\alpha} = 
        \sum_{e \in (\support \eta)^+} \sum_{j=1}^\infty \left| \Xi_{1,j,e} \right| \bigl( e^{-4\kappa j}\bigr)^{1-\alpha}
        .
    \end{align*} 
    Using Lemma~\ref{lemma: Xi1ke bound}, we bound the right hand side of the previous equation from above by
    \begin{align*}
        &\bigl|(\support \eta)^+ \bigr| \sum_{j=1}^\infty M_1^{2j-2}e^{-4\kappa j (1-\alpha)}
        =
        \bigl|(\support \eta)^+ \bigr| \frac{e^{-4\kappa  (1-\alpha)}}{1-M_1^2e^{-4\kappa (1-\alpha)}}.
    \end{align*}
    The desired conclusion now follows from the choice of \( \alpha.\)
\end{proof}

\begin{lemma}\label{lem:partition-functions}
Let \( \beta \geq 0 \) and \(\kappa > \kappa_0^{(\textrm{Higgs})} .\) Then
\begin{equation}\label{eq: log Z expansion}
    \log Z^{(U)}_{N,\beta,\kappa} = \log Z_{N,\beta,\kappa}^{*} = \sum_{\mathcal{S} \in \Xi} \Psi_{\beta,\kappa}(\mathcal{S}).
\end{equation} 
\end{lemma}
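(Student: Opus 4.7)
The plan is to identify $Z^{(U)}_{N,\beta,\kappa}$ as the partition function of an abstract polymer model in the sense of \cite{fv2017}, so that the stated identity becomes a direct application of the standard cluster expansion theorem, using the convergence estimate already established in Lemma~\ref{lemma: the assumption on a}. The second equality, $\log Z^{*}_{N,\beta,\kappa} = \sum_{\mathcal{S} \in \Xi}\Psi_{\beta,\kappa}(\mathcal{S})$, is just the definition \eqref{eq: spin-cluster-partition} combined with the absolute convergence of the series, which is the last assertion of Lemma~\ref{lemma: the assumption on a}. Thus the whole content of the proof lies in the first equality.

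The first step is the polymer decomposition. Given $\sigma \in \Omega^1(B_N,G)$, the connected components of the graph $\mathcal{G}_1(\sigma)$ are, by the very definition of $\Lambda$ in Section~\ref{sec: the graphs}, a (unique) collection $\{\eta_1,\dots,\eta_k\}$ of polymers in $\Lambda$ with pairwise disjoint supports and such that $\eta_i \nsim \eta_j$ for $i \neq j$. Conversely, every finite collection of pairwise non-adjacent polymers arises in this way. Hence
\begin{equation*}
  Z^{(U)}_{N,\beta,\kappa}
  = \sum_{k\geq 0} \frac{1}{k!}
  \sum_{\substack{\eta_1,\dots,\eta_k \in \Lambda \\ \eta_i \nsim \eta_j \text{ for } i\neq j}}
  \phi_{\beta,\kappa}\bigl(\eta_1 + \cdots + \eta_k\bigr),
\end{equation*}
and I must check that the activity factorizes. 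If $\eta_i \nsim \eta_j$ for all $i\neq j$, then no plaquette $p$ has edges from two distinct $\eta_i$'s in its boundary, so the sets $\support d\eta_i$ are pairwise disjoint, and obviously the sets $\support \eta_i$ are pairwise disjoint as well. Using the explicit product formula for $\phi_{\beta,\kappa}$ in Section~\ref{sec: activity} (or equivalently, the remark about additivity of $|\support\sigma|$ and $|\support d\sigma|$), this yields $\phi_{\beta,\kappa}(\eta_1+\cdots+\eta_k) = \prod_i \phi_{\beta,\kappa}(\eta_i)$. Therefore $Z^{(U)}_{N,\beta,\kappa}$ is exactly the polymer partition function associated with the polymers $\Lambda$, activities $\phi_{\beta,\kappa}$, and incompatibility relation $\sim$ introduced in Section~\ref{sec: the graphs}.

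The second step is to invoke the abstract cluster expansion, e.g.\ \cite[Theorem~5.4]{fv2017}. Under the Kotecký--Preiss-type condition
\begin{equation*}
  \sum_{\eta' \in \Lambda} \phi_{\beta,\kappa}(\eta')\, e^{a(\eta')}\mathbb{1}(\eta \sim \eta') \;\leq\; a(\eta), \qquad \eta \in \Lambda,
\end{equation*}
with $a(\eta) = -\alpha \log \phi_{0,\kappa}(\eta)$, this theorem asserts that $\log Z^{(U)}_{N,\beta,\kappa} = \sum_{\mathcal{S} \in \Xi} U(\mathcal{S})\prod_{\eta \in \mathcal{S}}\phi_{\beta,\kappa}(\eta)^{n_{\mathcal{S}}(\eta)} = \sum_{\mathcal{S} \in \Xi}\Psi_{\beta,\kappa}(\mathcal{S})$, with absolutely convergent right-hand side. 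The required condition for our values of $\beta$ and $\kappa$ is precisely what Lemma~\ref{lemma: the assumption on a} establishes (compare the bound proved there with the definition of $a$), so the hypothesis of the theorem is met.

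The only real obstacle is bookkeeping: checking carefully that the multiset convention for clusters used in our definition of $\Xi$, $U(\mathcal{S})$, and $\Psi_{\beta,\kappa}(\mathcal{S})$ matches the one in \cite{fv2017}, so that the formula from \cite[Theorem~5.4]{fv2017} can be cited verbatim. Once this correspondence is recorded (the factor $k!$ in \eqref{eq: ursell functions} already absorbs the symmetrization over orderings), the proof reduces to one or two lines that combine the polymer representation above with the cited theorem and Lemma~\ref{lemma: the assumption on a}.
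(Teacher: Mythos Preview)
Your proposal is correct and follows essentially the same approach as the paper's proof: both establish the bijection between $\Omega^1(B_N,G)$ and collections of pairwise-incompatible polymers, verify that $\phi_{\beta,\kappa}$ factorizes over such collections, and then invoke the abstract cluster expansion from~\cite{fv2017} together with the convergence criterion from Lemma~\ref{lemma: the assumption on a}. The only cosmetic difference is that the paper cites \cite[Proposition~5.3]{fv2017} for the identity $Z^{(U)}_{N,\beta,\kappa}=Z^{*}_{N,\beta,\kappa}$, while you point to Theorem~5.4; since the latter packages both the identity and the tail bounds, either citation is adequate.
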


\begin{proof}
    The set $\Omega^1(B_N, G)$ is in bijection with the set of subsets of $\Lambda $ with the property that the polymers in each subset have pairwise disjoint supports and that their differentials have pairwise disjoint supports. Therefore, for any $\beta$ and \( \kappa\) we can write
    \[
        Z^{(U)}_{N,\beta,\kappa} = \sum_{\Lambda' \subset \Lambda} \phi_{\beta}(\Lambda') \prod_{\{\nu,\nu'\} \subset \Lambda'} \mathbb{1}(\support \nu \cap \support \nu' = \emptyset).
    \] 

    On the other hand, if $ \kappa > \kappa_0^{(\textrm{Higgs})}$, we can apply Proposition~5.3 of \cite{fv2017} to see that the right-hand side in the last display equals $\log Z_{N,\beta,\kappa}^*$ as defined in \eqref{eq: spin-cluster-partition}. From this the desired conclusion follows.
\end{proof}
 
All results in this section will assume that $\beta \geq 0 $ and \( \kappa \geq \kappa_0^{(\textrm{Higgs})},\) and using this assumption, we from now on write $Z^{(U)}_{N,\beta,\kappa}$ also for the cluster partition function $Z_{N,\beta,\kappa}^*$.

\subsubsection{Cluster expansion of Wilson line observables}

Consider the weighted cluster partition function 
\begin{equation}\label{eq:dec5.1}
   Z_{N,\beta,\kappa}^{(U)}[\gamma] \coloneqq \exp\left(\sum_{\mathcal{S} \in \Xi} \Psi_{\beta,\kappa,\gamma}(\mathcal{S})\right), 
\end{equation}
where
\[
    \Psi_{\beta,\kappa,\gamma}(\mathcal{S}) \coloneqq U(\mathcal{S}) \phi_{\beta,\kappa}(\mathcal{S})  
    \rho\bigl( \mathcal{S}(\gamma) \bigr). 
\]
The series on the right-hand side is absolutely convergent when $\beta\geq 0$ and \( \kappa  \geq \kappa_0^{(\textrm{Higgs})} \) by the proof of Lemma~\ref{lemma: the assumption on a} since $ \pigl|\rho\bigl(\mathcal{S}(\gamma)\bigr)\pigr| = 1$ for each \( \mathcal{S} \in \Xi.\) As in the proof of Lemma~\ref{lem:partition-functions}, using \cite[Proposition 5.3]{fv2017}, replacing the weight $\phi_\beta(\mathcal{S})$ by $\phi_\beta(\mathcal{S})\rho\bigl(\mathcal{S}(\gamma)\bigr)$, we have
\[
    \log Z^{(U)}_{\beta,\kappa,N}[\gamma] = \sum_{\sigma \in \Omega^1(B_N,G)}  \phi_{\beta,\kappa}(\sigma) \rho\bigl(\sigma(\gamma) \bigr).
\]
Combining~\eqref{eq:dec5.1} and~Lemma~\ref{lem:partition-functions} we obtain the following result, which give us an alternative expression for \(\log \mathbb{E}^{(U)}_{N,\beta,\kappa}[ W_\gamma ].\)

\begin{proposition}\label{proposition: the cluster expansion}
    Let $\beta \geq 0$ and \( \kappa > \kappa_0^{(\textrm{Higgs})}.\) 
    %
    Then for all $N$ such that $\support \gamma \subset B_N$,
    \begin{equation}\label{eq: expansion equation}
        -\log \mathbb{E}^{(U)}_{N,\beta,\kappa}[ W_\gamma ] = \sum_{\mathcal{S} \in \Xi} \bigl( \Psi_{\beta,\kappa}(\mathcal{S})-\Psi_{\beta,\kappa,\gamma}(\mathcal{S})\bigr) = 
        \sum_{\mathcal{S} \in \Xi} \Psi_{\beta,\kappa}(\mathcal{S})\pigl( 1-\rho \bigl(\mathcal{S}(\gamma)\bigr) \pigr) .
    \end{equation}
\end{proposition}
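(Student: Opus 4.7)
The plan is to write the Wilson line expectation as a ratio of two partition functions and then apply the cluster expansion to each factor separately. By the definition of the measure $\mu_{N,\beta,\kappa}^{(U)}$ in~\eqref{eq: mubetakappaphi},
\[
\mathbb{E}^{(U)}_{N,\beta,\kappa}[W_\gamma] = \frac{\sum_{\sigma \in \Omega^1(B_N,G)} \phi_{\beta,\kappa}(\sigma)\,\rho\bigl(\sigma(\gamma)\bigr)}{\sum_{\sigma \in \Omega^1(B_N,G)} \phi_{\beta,\kappa}(\sigma)} = \frac{Z^{(U)}_{N,\beta,\kappa}[\gamma]}{Z^{(U)}_{N,\beta,\kappa}},
\]
where the numerator, viewed as a polymer partition function with the modified weight $\phi_{\beta,\kappa}(\mathcal{S})\rho(\mathcal{S}(\gamma))$, has been identified with $Z^{(U)}_{N,\beta,\kappa}[\gamma]$ in the discussion surrounding~\eqref{eq:dec5.1}. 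Taking logarithms and negating gives
\[
-\log \mathbb{E}^{(U)}_{N,\beta,\kappa}[W_\gamma] = \log Z^{(U)}_{N,\beta,\kappa} - \log Z^{(U)}_{N,\beta,\kappa}[\gamma].
\]

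Next, I would insert the two cluster expansions. By Lemma~\ref{lem:partition-functions}, $\log Z^{(U)}_{N,\beta,\kappa} = \sum_{\mathcal{S} \in \Xi} \Psi_{\beta,\kappa}(\mathcal{S})$, and by the analogous argument noted after~\eqref{eq:dec5.1}, obtained by applying Proposition~5.3 of~\cite{fv2017} with $\phi_{\beta,\kappa}(\mathcal{S})$ replaced by $\phi_{\beta,\kappa}(\mathcal{S})\rho\bigl(\mathcal{S}(\gamma)\bigr)$, we also have $\log Z^{(U)}_{N,\beta,\kappa}[\gamma] = \sum_{\mathcal{S} \in \Xi} \Psi_{\beta,\kappa,\gamma}(\mathcal{S})$. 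Both series are absolutely convergent under the assumption $\kappa > \kappa_0^{(\textrm{Higgs})}$: for the first, this is exactly Lemma~\ref{lemma: the assumption on a}; for the second, the same bound applies because $|\rho\bigl(\mathcal{S}(\gamma)\bigr)| = 1$ for every $\mathcal{S} \in \Xi$ (since $\rho$ is unitary). This absolute convergence is what licenses combining the two expansions termwise into a single sum over $\mathcal{S} \in \Xi$.

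Finally, using the definitions of $\Psi_{\beta,\kappa}$ and $\Psi_{\beta,\kappa,\gamma}$ from Section~\ref{sec: cluster expansions higgs} and from just after~\eqref{eq:dec5.1},
\[
\Psi_{\beta,\kappa}(\mathcal{S}) - \Psi_{\beta,\kappa,\gamma}(\mathcal{S}) = U(\mathcal{S})\phi_{\beta,\kappa}(\mathcal{S})\bigl(1 - \rho(\mathcal{S}(\gamma))\bigr) = \Psi_{\beta,\kappa}(\mathcal{S})\bigl(1 - \rho(\mathcal{S}(\gamma))\bigr),
\]
yielding both equalities in the proposition. I do not expect any significant obstacle: the only delicate point is justifying the termwise subtraction of the two cluster expansions, and this is handled by the absolute convergence already established in Lemma~\ref{lemma: the assumption on a}. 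Everything else is a direct identification of the weighted partition function with $Z^{(U)}_{N,\beta,\kappa}[\gamma]$ and routine algebra on the polymer weights.
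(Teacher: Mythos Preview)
Your proposal is correct and follows essentially the same approach as the paper: write the expectation as the ratio $Z^{(U)}_{N,\beta,\kappa}[\gamma]/Z^{(U)}_{N,\beta,\kappa}$, apply the cluster expansion to each factor via Lemma~\ref{lem:partition-functions} and the discussion around~\eqref{eq:dec5.1}, and subtract. The paper's proof is simply a terser version of yours, and your added remarks on absolute convergence justifying the termwise subtraction are appropriate.
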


\begin{proof}
    Using~\eqref{eq:dec5.1} and~Lemma~\ref{lem:partition-functions}   we conclude that
    \[
        \log \mathbb{E}^{(U)}_{N,\beta,\kappa}[W_\gamma] = \log \frac{Z^{(U)}_{N,\beta,\kappa}[\gamma]}{Z^{(U)}_{N,\beta,\kappa}},
    \]
    which is what we wanted to prove.
\end{proof}

\begin{remark}
   Notice that Proposition~\ref{proposition: the cluster expansion} implies that $\mathbb{E}^{(U)}_{N,\beta,\kappa}[W_\gamma] \in (0,1]$ when \( \beta \geq 0\) and \( \kappa \geq \kappa_0^{(\textrm{Higgs})}. \) This fact is not a priori clear since $W_\gamma \in \{-1,1 \}$ for every $\sigma \in \Omega^1(B_N,\mathbb{Z_2}) $.
\end{remark}

\subsubsection{Cluster expansion of the Marcu-Fredenhagen ratio}\label{section: FK cluster}

In this section, we assume that two non-trivial paths \( \gamma_1\) and \( \gamma_2\) are given, with disjoint support and with the same endpoints so that \( \gamma \coloneqq \gamma_1 + \gamma_2\) is a loop. The goal of this section is to use the cluster expansions of~\eqref{eq: spin-cluster-partition} and~\eqref{eq:dec5.1} to give an expression of the Marcu-Fredenhagen ratio which uses the cluster expansions.

To simplify notation, we define 
\[
\rho_{N}(\gamma_1,\gamma_2) \coloneqq   \frac{\mathbb{E}^{(U)}_{N,\beta,\kappa} [W_{\gamma_1}] \,\mathbb{E}^{(U)}_{N,\beta,\kappa} [W_{\gamma_2} ]}{\mathbb{E}^{(U)}_{N,\beta,\kappa} [ W_{\gamma_1+\gamma_2} ] }.
\]
We note that by the Ginibre inequality (see~\cite[Section 2.6]{f2022b}), the limit 
\[
    \rho(\gamma_1,\gamma_2) \coloneqq \lim_{N \to \infty} \rho_{N}(\gamma_1,\gamma_2)
\]
exists and is translation invariant.

\begin{lemma}\label{lemma: ratio sum}
     Let $\beta \geq 0$ and \( \kappa > \kappa_0^{(\textrm{Higgs})}.\) Then
     \begin{equation}\label{eq: ratio sum}
        \log \rho_N(\gamma_1,\gamma_2)
        =
        -
        4\sum_{\mathcal{S} \in \Xi } \Psi_{\beta,\kappa}(\mathcal{S}) \mathbb{1} \pigl(  \mathcal{S}(\gamma_1) = \mathcal{S}(\gamma_2)  =  1 \pigr) .
    \end{equation}
\end{lemma}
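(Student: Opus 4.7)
The plan is to derive the identity directly from the cluster expansion of Wilson line observables in Proposition~\ref{proposition: the cluster expansion}, together with the special features of the structure group $G = \mathbb{Z}_2$. The whole calculation is essentially a two-line manipulation; the only point of care is handling the indicator functions correctly.

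First, I would rewrite Proposition~\ref{proposition: the cluster expansion} in a more convenient form. Since $G = \mathbb{Z}_2$ and the representation $\rho$ is the sign representation, for any $\mathcal{S} \in \Xi$ and any 1-chain $\gamma$ we have $\rho(\mathcal{S}(\gamma)) \in \{-1,+1\}$, and more precisely
\begin{equation*}
    1 - \rho\bigl(\mathcal{S}(\gamma)\bigr) = 2\,\mathbb{1}\bigl(\mathcal{S}(\gamma) = 1\bigr),
\end{equation*}
where the equality inside the indicator is understood in $\mathbb{Z}_2$. Substituting this into~\eqref{eq: expansion equation} gives
\begin{equation*}
    -\log \mathbb{E}^{(U)}_{N,\beta,\kappa}[W_\gamma] = 2\sum_{\mathcal{S} \in \Xi} \Psi_{\beta,\kappa}(\mathcal{S})\,\mathbb{1}\bigl(\mathcal{S}(\gamma) = 1\bigr)
\end{equation*}
for any path $\gamma$, and in particular for $\gamma_1$, $\gamma_2$, and $\gamma_1+\gamma_2$ (the absolute convergence from Lemma~\ref{lemma: the assumption on a} justifies splitting the sum).

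Next, I would apply this three times and use $\log \rho_N(\gamma_1,\gamma_2) = \log \mathbb{E}^{(U)}[W_{\gamma_1}] + \log \mathbb{E}^{(U)}[W_{\gamma_2}] - \log \mathbb{E}^{(U)}[W_{\gamma_1+\gamma_2}]$ to obtain
\begin{equation*}
    \log \rho_N(\gamma_1,\gamma_2)
    = 2\sum_{\mathcal{S} \in \Xi} \Psi_{\beta,\kappa}(\mathcal{S})\Bigl[\mathbb{1}\bigl(\mathcal{S}(\gamma_1+\gamma_2)=1\bigr) - \mathbb{1}\bigl(\mathcal{S}(\gamma_1)=1\bigr) - \mathbb{1}\bigl(\mathcal{S}(\gamma_2)=1\bigr)\Bigr].
\end{equation*}
By linearity of the cluster evaluation $\mathcal{S}(\cdot)$ on $1$-chains, $\mathcal{S}(\gamma_1+\gamma_2) = \mathcal{S}(\gamma_1) + \mathcal{S}(\gamma_2)$ in $\mathbb{Z}_2$. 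Writing $a = \mathbb{1}(\mathcal{S}(\gamma_1)=1)$ and $b = \mathbb{1}(\mathcal{S}(\gamma_2)=1)$, the bracket above becomes $(a + b - 2ab) - a - b = -2ab$, i.e.,
\begin{equation*}
    \mathbb{1}\bigl(\mathcal{S}(\gamma_1+\gamma_2)=1\bigr) - \mathbb{1}\bigl(\mathcal{S}(\gamma_1)=1\bigr) - \mathbb{1}\bigl(\mathcal{S}(\gamma_2)=1\bigr) = -2\,\mathbb{1}\bigl(\mathcal{S}(\gamma_1)=\mathcal{S}(\gamma_2)=1\bigr).
\end{equation*}
Plugging this in yields exactly the right-hand side of~\eqref{eq: ratio sum}.

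There is no real obstacle here; the only potential pitfall is keeping straight that the arithmetic of $\mathcal{S}(\gamma)$ takes place in $\mathbb{Z}_2$ (so addition is mod $2$) while the cluster sums themselves are real-valued absolutely convergent series, which is why the decomposition of $\log \rho_N$ term by term is legitimate. The $\mathbb{Z}_2$-specific identity $1-\rho(g) = 2\mathbb{1}(g=1)$ is what converts the sign-valued observable into an indicator and produces the factor of $4$ in the final answer.
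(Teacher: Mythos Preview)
Your proof is correct and follows essentially the same approach as the paper: expand $\log \rho_N$ via Proposition~\ref{proposition: the cluster expansion} applied to $\gamma_1$, $\gamma_2$, and $\gamma_1+\gamma_2$, then simplify using the $\mathbb{Z}_2$-structure. The only cosmetic difference is that you convert $1-\rho(\mathcal{S}(\gamma))$ to the indicator $2\,\mathbb{1}(\mathcal{S}(\gamma)=1)$ at the outset and use the XOR identity, whereas the paper keeps the $\rho$-values and factors $(\rho(\mathcal{S}(\gamma_1))-1)(\rho(\mathcal{S}(\gamma_2))-1)$ before passing to indicators.
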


\begin{proof} 
    Using~Proposition~\ref{proposition: the cluster expansion}, we obtain
\begin{align*}
    &\log \rho_{N}(\gamma_1,\gamma_2)
    =
    \log \mathbb{E}^{(U)}_{N,\beta,\kappa} [W_{\gamma_1}]  + \log \mathbb{E}^{(U)}_{N,\beta,\kappa} [W_{\gamma_2}] - \log \mathbb{E}^{(U)}_{N,\beta,\kappa} [W_{\gamma}] 
    \\&\qquad=
    \sum_{\mathcal{S} \in \Xi} \Psi_{\beta,\kappa}(\mathcal{S}) \Bigl( \rho \bigl(\mathcal{S}(\gamma_1+\gamma_2)\bigr)  -\rho \bigl(\mathcal{S}(\gamma_1)\bigr)  -\rho \bigl(\mathcal{S}(\gamma_2)\bigr) +1\Bigr)
        \\&\qquad=  \sum_{\mathcal{S} \in \Xi   }  \Psi_{\beta,\kappa}(\mathcal{S}) \Bigl( \rho \bigl(\mathcal{S}(\gamma_1)\bigr)\rho \bigl(\mathcal{S}(\gamma_2)\bigr)  -\rho \bigl(\mathcal{S}(\gamma_1)\bigr)  -\rho \bigl(\mathcal{S}(\gamma_2)\bigr) +1\Bigr)
        \\&\qquad=  
        \sum_{\mathcal{S} \in \Xi   } \Psi_{\beta,\kappa}(\mathcal{S})  \pigl( \rho \bigl(\mathcal{S}(\gamma_1)\bigr) - 1 \pigr) \pigl( \rho \bigl(\mathcal{S}(\gamma_2)\bigr) - 1 \pigr) 
        \\&\qquad= 
        -
        4\sum_{\mathcal{S} \in \Xi  } \Psi_{\beta,\kappa}(\mathcal{S}) \mathbb{1} \pigl(  \mathcal{S}(\gamma_1) = \mathcal{S}(\gamma_2)  =  1 \pigr) .
\end{align*}
This concludes the proof.
\end{proof}

\subsubsection{Cluster expansion of the covariance of two Wilson lines}

In this section, we use Lemma~\ref{lemma: ratio sum} to give an upper bound of the covariance of two Wilson lines as a sum over clusters. This result, Lemma~\ref{lemma: covariance expansion} below, will be the main ingredient in the proof of Theorem~\ref{theorem: correlation}.

\begin{lemma}\label{lemma: covariance expansion}
    Let $\beta \geq 0$ and \( \kappa > \kappa_0^{(\textrm{Higgs})}.\) Further, let \( \gamma_1\) and \( \gamma_2\) be two paths with disjoint support. Then
    \begin{equation}\label{eq: covariance expansion}
        \bigl| \mathbb{E}^{(U)}_{N,\beta,\kappa}[W_{\gamma_1+\gamma_2}] -\mathbb{E}^{(U)}_{N,\beta,\kappa}[W_{\gamma_1}] \mathbb{E}^{(U)}_{N,\beta,\kappa}[W_{\gamma_2}]  \bigr|
        \leq 
        4
        \Bigl| \sum_{\mathcal{V}\in \Xi} \Psi_\beta(\mathcal{V}) \mathbb{1}\bigl(\mathcal{S}(\gamma_1) = \mathcal{S}(\gamma_2) = 1\bigr)
        \Bigr| .
    \end{equation} 
\end{lemma}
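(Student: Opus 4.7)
The plan is to derive the covariance bound as an essentially immediate algebraic consequence of Lemma~\ref{lemma: ratio sum}. Write
\begin{equation*}
    a = \mathbb{E}^{(U)}_{N,\beta,\kappa}[W_{\gamma_1}], \quad
    b = \mathbb{E}^{(U)}_{N,\beta,\kappa}[W_{\gamma_2}], \quad
    c = \mathbb{E}^{(U)}_{N,\beta,\kappa}[W_{\gamma_1+\gamma_2}],
\end{equation*}
and set
\begin{equation*}
    T \coloneqq \sum_{\mathcal{S} \in \Xi} \Psi_{\beta,\kappa}(\mathcal{S})\, \mathbb{1}\bigl(\mathcal{S}(\gamma_1) = \mathcal{S}(\gamma_2) = 1\bigr).
\end{equation*}
The covariance on the left-hand side of~\eqref{eq: covariance expansion} is exactly $c - ab$, and the right-hand side is $4|T|$, so it suffices to prove $|c - ab| \leq 4|T|$.

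The first step is to translate Lemma~\ref{lemma: ratio sum} into a multiplicative identity: since $\rho_N(\gamma_1,\gamma_2) = ab/c$, the lemma gives $\log(ab/c) = -4T$, equivalently
\begin{equation*}
    c = ab\, e^{4T}, \qquad \text{so that} \qquad c - ab = ab\bigl(e^{4T}-1\bigr) = c\bigl(1 - e^{-4T}\bigr).
\end{equation*}
The second step is to invoke the remark following Proposition~\ref{proposition: the cluster expansion}, which guarantees $a, b, c \in (0,1]$ in the regime $\kappa > \kappa_0^{(\textrm{Higgs})}$; in particular $ab \leq 1$ and $c \leq 1$.

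The third step is an elementary case split on the sign of $T$, using the two equivalent expressions for $c - ab$. If $T \geq 0$, the bound $1 - e^{-x} \leq x$ for $x \geq 0$ applied to $x = 4T$ combines with $c \leq 1$ to give
\begin{equation*}
    |c - ab| = c\bigl(1 - e^{-4T}\bigr) \leq c \cdot 4T \leq 4T = 4|T|.
\end{equation*}
If $T \leq 0$, the bound $1 - e^{y} \leq -y$ for $y \leq 0$ applied to $y = 4T$ combines with $ab \leq 1$ to give
\begin{equation*}
    |c - ab| = ab\bigl(1 - e^{4T}\bigr) \leq ab \cdot (-4T) \leq -4T = 4|T|.
\end{equation*}
In either case $|c - ab| \leq 4|T|$, which is precisely~\eqref{eq: covariance expansion}.

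There is no genuine obstacle: the heavy lifting was done in Lemma~\ref{lemma: ratio sum} (the clean cluster-expansion identity) and in the earlier remark (which secures positivity and the upper bound $1$ on each Wilson line expectation). The only mild subtlety is that the statement needs to accommodate both signs of $T$, for which the two algebraically equivalent rewritings $c - ab = ab(e^{4T}-1) = c(1-e^{-4T})$ are precisely what allows the sharp constant $4$ to survive on both sides.
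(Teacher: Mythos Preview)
Your proof is correct and follows essentially the same approach as the paper: both use Lemma~\ref{lemma: ratio sum} together with the fact that the three expectations lie in $(0,1]$. The paper compresses your sign-of-$T$ case split into the single Lipschitz-type inequality $|e^{u}-e^{v}|\le|u-v|$ for $u,v\le 0$ (applied to $u=\log c$ and $v=\log(ab)$), but the content is identical.
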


\begin{proof}
    Since the function \( x \mapsto e^x \) is convex for all \( x \in \mathbb{R},\) we have
    \begin{align*}
        &\bigl| \mathbb{E}^{(U)}_{N,\beta,\kappa}[W_{\gamma_1+\gamma_2}] -\mathbb{E}^{(U)}_{N,\beta,\kappa}[W_{\gamma_1}] \mathbb{E}^{(U)}_{N,\beta,\kappa}[W_{\gamma_2}]  \bigr|
        \\&\qquad\leq 
        \bigl| \log \mathbb{E}^{(U)}_{N,\beta,\kappa}[W_{\gamma_1+\gamma_2}] -\log  \mathbb{E}^{(U)}_{N,\beta,\kappa}[W_{\gamma_1}] -\log \mathbb{E}^{(U)}_{N,\beta,\kappa}[W_{\gamma_2}]  \bigr|
        =
        \bigl|\log \rho_N(\gamma_1,\gamma_2)\bigr|.
    \end{align*}
    Using Lemma~\ref{lemma: ratio sum}, the desired conclusion immediately follows.
\end{proof}

One interpretation of the right-hand-side of~\eqref{eq: covariance expansion} is that a cluster \( \mathcal{S} \in \Xi\)  makes a non-zero contribution to the covariance on the left-hand-side of~\eqref{eq: covariance expansion} only if its support connects the two paths \( \gamma_1\) and \( \gamma_2.\)

\subsection{Upper bound for clusters}\label{sec: upper bounds higgs}
In this section we give upper bounds on sums over the activity of sets of clusters which naturally arise in the proofs of Theorem~\ref{theorem: mf ratio Higgs} and Theorem~\ref{theorem: correlation}. 

\begin{lemma}\label{lemma: general Cbeta}
    Let \( \beta \geq 0 \) and  $\kappa > \kappa_0^{(\textrm{Higgs})},$ and let \( e \in C_1(B_N)^+.\) Then 
    \begin{equation*}\label{eq: general Cbeta}
        \sum_{\mathcal{S} \in \Xi_{1^+,1^+,e}}
        \bigl| \Psi_{\beta,\kappa}(\mathcal{S}) \bigr|
        \leq
        \frac{ e^{-(2\kappa-\alpha) 2}}{ 1-4M_1^2 e^{-2(2\kappa-\alpha) }},
    \end{equation*}
    where \( \alpha\) is as in~\eqref{eq: kappa0}.
\end{lemma}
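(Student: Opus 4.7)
The plan is to reduce the sum over clusters containing \( e \) to a sum over individual polymers containing \( e, \) then sum over polymer sizes using a geometric series bound, invoking Lemma~\ref{lemma: the assumption on a} and Lemma~\ref{lemma: Xi1ke bound} as the two main inputs.

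First I would observe that if \( e \in \support \mathcal{S}, \) then there is at least one polymer \( \eta \in \mathcal{S}\) with \( e \in (\support \eta)^+.\) A cluster containing several such polymers gets counted several times, but this only weakens the bound, so the union bound gives
\begin{equation*}
    \sum_{\mathcal{S} \in \Xi_{1^+,1^+,e}} \bigl| \Psi_{\beta,\kappa}(\mathcal{S}) \bigr|
    \;\leq\;
    \sum_{\eta \in \Lambda \colon e \in (\support \eta)^+} \;\sum_{\mathcal{S} \in \Xi \colon \eta \in \mathcal{S}} \bigl|\Psi_{\beta,\kappa}(\mathcal{S})\bigr|.
\end{equation*}
The inner sum is precisely the object bounded by Lemma~\ref{lemma: the assumption on a}, which yields
\begin{equation*}
    \sum_{\mathcal{S} \ni \eta} \bigl| \Psi_{\beta,\kappa}(\mathcal{S})\bigr|
    \;\leq\; \bigl(\phi_{0,\kappa}(\eta)\bigr)^{-\alpha} \phi_{\beta,\kappa}(\eta)
    \;\leq\; \bigl(\phi_{0,\kappa}(\eta)\bigr)^{1-\alpha},
\end{equation*}
where the last inequality uses \( \beta \geq 0\) so that \( \phi_{\beta,\kappa}(\eta) \leq \phi_{0,\kappa}(\eta) = e^{-4\kappa|(\support \eta)^+|}.\)

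Next I would stratify the outer sum by the size \( j = |(\support \eta)^+|\) and invoke Lemma~\ref{lemma: Xi1ke bound} to bound the number of connected polymers of size \( j\) containing \( e \) by \( M_1^{2j-2}. \) This converts the double sum into a single geometric series in \( j\) with ratio of the form \( M_1^2 e^{-c(\kappa,\alpha)}, \) which converges under the assumption \( \kappa > \kappa_0^{(\textrm{Higgs})}\) thanks to the specific choice of \( \alpha\) and \( \kappa_0^{(\textrm{Higgs})}\) in~\eqref{eq: kappa0}. Summing the series then yields a closed-form bound of the target type \( \tfrac{x}{1-cx},\) and it remains to identify \( x\) with \( e^{-2(2\kappa-\alpha)}\) and \( c \) with \( 4M_1^2\) by a routine manipulation using the definition of \( \alpha\) and monotonicity in the exponential rate.

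The main obstacle is bookkeeping the constants so they land on the stated form \( \frac{e^{-2(2\kappa-\alpha)}}{1-4M_1^2 e^{-2(2\kappa-\alpha)}}\) rather than the naive expression \( \frac{e^{-4\kappa(1-\alpha)}}{1-M_1^2 e^{-4\kappa(1-\alpha)}}\) obtained from the direct computation above. The factor \( 4\) in the denominator and the exponent rate \( 2(2\kappa-\alpha) = 4\kappa-2\alpha\) (as opposed to \( 4\kappa(1-\alpha)\)) strongly suggest that the bound should be derived by applying the Kotecky-Preiss criterion with the alternative auxiliary function \( a(\eta) = \alpha|\support \eta|\) (which, via the orientation factor \( |\support \eta| = 2|(\support \eta)^+|,\) replaces the exponent \( 4\kappa\alpha\) by \( 2\alpha\)), and by using a slightly looser count of polymers that also absorbs extra combinatorial factors from the multi-polymer structure of clusters and from their coboundary supports. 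Once this refined counting is in place, the geometric series rearranges into the claimed expression, and convergence is again assured by \( \kappa > \kappa_0^{(\textrm{Higgs})}.\)
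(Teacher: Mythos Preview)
Your approach is exactly the paper's: union-bound over polymers $\eta$ containing $e$, apply Lemma~\ref{lemma: the assumption on a} to the inner sum, stratify by $j=|(\support\eta)^+|$, count with Lemma~\ref{lemma: Xi1ke bound}, and sum a geometric series. Your diagnosis of the constants is also correct: the paper arrives at the rate $2(2\kappa-\alpha)$ because the Kotecky--Preiss function actually used in the proof of Lemma~\ref{lemma: the assumption on a} is $a(\eta)=\alpha|\support\eta|=2\alpha|(\support\eta)^+|$ (despite the clause naming it $-\alpha\log\phi_{0,\kappa}(\eta)$), so the relevant per-polymer bound is $\phi_{\beta,\kappa}(\eta)e^{a(\eta)}\le e^{-2(2\kappa-\alpha)j}$ rather than $e^{-4\kappa(1-\alpha)j}$; and the factor $4M_1^2$ in the denominator comes simply from the paper replacing $|\Xi_{1,j,e}|\le M_1^{2j-2}$ by the looser $(2M_1)^{2j-2}$ in the geometric series. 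No new idea is required beyond what you already wrote.
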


\begin{proof}
    By Lemma~\ref{lemma: the assumption on a}, for any \( \eta \in \Lambda,\) we have 
    \begin{equation*}
        \sum_{\mathcal{S} \in \Xi \colon \eta \in \mathcal{S}} \bigl|\Psi_{\beta,\kappa}(\mathcal{S}) \bigr|\leq  \phi_{\beta,\kappa}(\eta) \bigl( \phi_{0,\kappa}(\eta) \bigr)^{-\alpha},
    \end{equation*}
    and hence
    \begin{equation}\label{eq: b1}
        \begin{split}
            &\sum_{\mathcal{S} \in \Xi_{1^+,1^+,e}}
            \bigl| \Psi_{\beta,\kappa}(\mathcal{S}) \bigr|
            \leq 
            \sum_{\{ \eta \} \in \Xi_{1,1+,e}}
            \sum_{\mathcal{S} \in \Xi \colon \eta \in \mathcal{S}} \bigl| \Psi_{\beta,\kappa}(\mathcal{S}) \bigr|
            \leq 
            \sum_{\{ \eta \} \in \Xi_{1,1+,e}}
            \phi_{\beta,\kappa}(\eta) \bigl( \phi_{0,\kappa}(\eta) \bigr)^{-\alpha},
        \end{split}
    \end{equation}
    By Lemma~\ref{lemma: Xi1ke bound}, we have
    \begin{equation}\label{eq: b2}
        \begin{split}
            &
            \sum_{\{ \eta \} \in \Xi_{1,1+,e}}
            \phi_{\beta,\kappa}(\eta) \bigl( \phi_{0,\kappa}(\eta) \bigr)^{-\alpha}
            \leq
            \sum_{j=1}^{\infty}
            |\Xi_{1,j,e}|
            e^{-2(2\kappa-\alpha)j}
            =
            \sum_{j=1}^{\infty}
            (2M_1)^{2j-2}
            e^{-(2\kappa-\alpha) 2j}
            .
        \end{split}
    \end{equation}
    Combining~\eqref{eq: b1} and~\eqref{eq: b2}, the desired conclusion immediately follows.
\end{proof}

\begin{lemma}\label{lemma: new upper bound}
    Let \( \beta \geq 0 \) and \( \kappa > \kappa_0^{(\textrm{Higgs})}.\) Further, let \( k \geq 1\) and \( e \in C_1(B_N).\)  
    Then, for any \( \varepsilon>0,\) we have
    \begin{equation*}
        \begin{split}
            & \sum_{\mathcal{S} \in \Xi_{1+,k+,e}} \bigl| \Psi_{\beta,\kappa}(\mathcal{S}) \bigr|   
            \leq  
            4^{-1}C_{\varepsilon}e^{-4k(\kappa-\kappa_0^{(\textrm{Higgs})}-\varepsilon) },
        \end{split}
    \end{equation*} 
    where \( C_{\varepsilon}\) is defined by
    \begin{equation}\label{eq: cbeta*}
        C_{\varepsilon} \coloneqq 4\sup_{N \geq 1} \sup_{e \in C_1(B_N)}\sum_{\mathcal{S} \in \Xi_{1+,1+,e}} \bigl| \Psi_{0,\kappa_0^{(\textrm{Higgs})}+\varepsilon}(\mathcal{S}) \bigr| < \frac{ 4e^{-(2\kappa-\alpha) 2}}{ 1-4M_1^2 e^{-2(2\kappa-\alpha) }}.
    \end{equation}
\end{lemma}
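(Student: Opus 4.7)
The plan is elementary: peel off the extra decay in $\kappa$ beyond the threshold $\kappa_0^{(\textrm{Higgs})}+\varepsilon$, and then invoke the definition of $C_\varepsilon$ for what remains. Since the cluster weight is multiplicative in $\|\mathcal{S}\|_1$ and the Ursell factor $U(\mathcal{S})$ does not depend on $\beta$ or $\kappa$, such a splitting is immediate.

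First, I would use $\beta\geq 0$ to throw away the plaquette contribution, observing that $\phi_{\beta,\kappa}(\mathcal{S}) \leq \phi_{0,\kappa}(\mathcal{S}) = e^{-4\kappa\|\mathcal{S}\|_1}$. Writing $4\kappa = 4(\kappa_0^{(\textrm{Higgs})}+\varepsilon) + 4(\kappa-\kappa_0^{(\textrm{Higgs})}-\varepsilon)$ yields the bound
\begin{equation*}
|\Psi_{\beta,\kappa}(\mathcal{S})|
\;\leq\; |U(\mathcal{S})|\, e^{-4(\kappa_0^{(\textrm{Higgs})}+\varepsilon)\|\mathcal{S}\|_1}\cdot e^{-4(\kappa-\kappa_0^{(\textrm{Higgs})}-\varepsilon)\|\mathcal{S}\|_1}
\;=\; \bigl|\Psi_{0,\kappa_0^{(\textrm{Higgs})}+\varepsilon}(\mathcal{S})\bigr|\cdot e^{-4(\kappa-\kappa_0^{(\textrm{Higgs})}-\varepsilon)\|\mathcal{S}\|_1}.
\end{equation*}

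Second, I would exploit the constraint $\|\mathcal{S}\|_1\geq k$ that is built into the definition of $\Xi_{1+,k+,e}$. In the regime of interest $\kappa-\kappa_0^{(\textrm{Higgs})}-\varepsilon>0$, this gives $e^{-4(\kappa-\kappa_0^{(\textrm{Higgs})}-\varepsilon)\|\mathcal{S}\|_1}\leq e^{-4k(\kappa-\kappa_0^{(\textrm{Higgs})}-\varepsilon)}$. Pulling this constant out and enlarging the index set from $\Xi_{1+,k+,e}$ to $\Xi_{1+,1+,e}$, I obtain
\begin{equation*}
\sum_{\mathcal{S}\in \Xi_{1+,k+,e}}|\Psi_{\beta,\kappa}(\mathcal{S})|
\;\leq\; e^{-4k(\kappa-\kappa_0^{(\textrm{Higgs})}-\varepsilon)}\sum_{\mathcal{S}\in \Xi_{1+,1+,e}}\bigl|\Psi_{0,\kappa_0^{(\textrm{Higgs})}+\varepsilon}(\mathcal{S})\bigr|,
\end{equation*}
and by the very definition of $C_\varepsilon$ in~\eqref{eq: cbeta*} the remaining sum is at most $C_\varepsilon/4$, uniformly in $N$ and $e$. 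This is the inequality claimed.

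Third and finally, to justify that $C_\varepsilon$ is finite and that it satisfies the closed-form upper bound stated in~\eqref{eq: cbeta*}, I would simply apply Lemma~\ref{lemma: general Cbeta} with $\beta$ replaced by $0$ and $\kappa$ replaced by $\kappa_0^{(\textrm{Higgs})}+\varepsilon$; since $\varepsilon>0$, this parameter value is strictly above the threshold, so the hypothesis $\kappa>\kappa_0^{(\textrm{Higgs})}$ there is met and the lemma delivers both finiteness and the geometric-series bound. The only nontrivial decision is the choice of threshold in the exponent splitting, and after that the proof is purely bookkeeping; I do not anticipate any serious obstacle.
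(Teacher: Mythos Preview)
Your proposal is correct and mirrors the paper's argument almost exactly: the paper also observes that $U(\mathcal{S})$ is independent of $\beta,\kappa$, writes $\Psi_{\beta,\kappa}(\mathcal{S}) = e^{-4(\kappa-\kappa_0^{(\textrm{Higgs})}-\varepsilon)\|\mathcal{S}\|_1}\Psi_{\beta,\kappa_0^{(\textrm{Higgs})}+\varepsilon}(\mathcal{S})$, uses $\|\mathcal{S}\|_1\geq k$ to extract the factor $e^{-4k(\kappa-\kappa_0^{(\textrm{Higgs})}-\varepsilon)}$, enlarges to $\Xi_{1+,1+,e}$, and appeals to Lemma~\ref{lemma: general Cbeta} for the finiteness and closed-form bound on $C_\varepsilon$. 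The only cosmetic difference is that you discard the $\beta$-dependence at the outset via $\phi_{\beta,\kappa}\leq\phi_{0,\kappa}$, whereas the paper keeps $\beta$ through the splitting and only implicitly uses $|\Psi_{\beta,\kappa_0^{(\textrm{Higgs})}+\varepsilon}(\mathcal{S})|\leq|\Psi_{0,\kappa_0^{(\textrm{Higgs})}+\varepsilon}(\mathcal{S})|$ at the very end to match the definition of $C_\varepsilon$.
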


\begin{proof}
    Let \( \varepsilon>0. \) From Lemma~\ref{lemma: general Cbeta} we immediately get the upper bound on \( C_{\varepsilon} \) in~\eqref{eq: cbeta*}, and hence \( C_{\varepsilon} \)  is well defined.
    
    For any \( \mathcal{S}\in \Xi,\) we have \( \phi_{\beta,\kappa}(\mathcal{S}) = e^{-2\beta\|\mathcal{S}\|_2 - 2\kappa \|\mathcal{S}\|_1}\) and \( \Psi_{\beta,\kappa}(\mathcal{S}) = U(\mathcal{S}) \psi_{\beta,\kappa}(\mathcal{S}),\) where \( U(\mathcal{S})\) does not depend on \( \beta \) and \( \kappa,\) and hence \[
    \Psi_{\beta,\kappa}(\mathcal{S}) = e^{-2(\kappa-\kappa_0^{(\textrm{Higgs})}-\varepsilon)\| \mathcal{S}\|_1} \Psi_{\beta,\kappa_0^{(\textrm{Higgs})}+\varepsilon}(\mathcal{S}). \]
    Using this observation, we obtain
    \begin{equation*}
        \begin{split}
            & \sum_{\mathcal{S} \in \Xi_{1+,k+,e}} \bigl| \Psi_{\beta,\kappa}(\mathcal{S}) \bigr|   
            \leq  
            e^{-4(\kappa-\kappa_0^{(\textrm{Higgs})}-\varepsilon) k}\sum_{\mathcal{S} \in \Xi_{1+,k+,e}} \bigl| \Psi_{\beta,\kappa_0^{(\textrm{Higgs})}+\varepsilon}(\mathcal{S}) \bigr|     
            \\&\qquad\leq  
            e^{-4(\kappa-\kappa_0^{(\textrm{Higgs})}-\varepsilon) k}\sum_{\mathcal{S} \in \Xi_{1+,1+,e}} \bigl| \Psi_{\beta,\kappa_0^{(\textrm{Higgs})}+\varepsilon}(\mathcal{S}) \bigr|.
        \end{split}
    \end{equation*} 
    This concludes the proof.
\end{proof}

\subsection{Proof of Theorem~\ref{theorem: correlation}}\label{sec: proof of decay}

In this section we give a proof of~Theorem~\ref{theorem: correlation}, which gives an exponential upper bound on the decay of correlations.

\begin{proof}[Proof of Theorem~\ref{theorem: correlation}] 
    If \( \mathcal{S} \in \Xi\) is such that \( \mathcal{S}(\gamma_1) = \mathcal{S}(\gamma_2) = 1, \) then there is  some edge \( e \in \support \gamma_1\) such that \( \mathcal{S} \in \Xi_{1^+,\dist(\gamma_1,\gamma_2),e}.\) Using Lemma~\ref{lemma: covariance expansion}, it follows that  
    \begin{align*} 
        &\bigl| \mathbb{E}^{(U)}_{N,\beta,\kappa}[W_{\gamma_1+\gamma_2}] -\mathbb{E}^{(U)}_{N,\beta,\kappa}[W_{\gamma_1}] \mathbb{E}^{(U)}_{N,\beta,\kappa}[W_{\gamma_2}]  \bigr|
        \leq 
        4
        \Bigl| \sum_{\mathcal{S}\in \Xi} \Psi_{\beta,\kappa}(\mathcal{S}) \mathbb{1}\bigl(\mathcal{S}(\gamma_1) = \mathcal{S}(\gamma_2) = 1\bigr)
        \Bigr|
        \\&\qquad\leq
        4
        \sum_{e \in \support \gamma} \sum_{\mathcal{S}\in \Xi_{1^+,\dist(\gamma_1,\gamma_2),e}} \bigl| \Psi_{\beta,\kappa}(\mathcal{S}) 
        \bigr|. 
    \end{align*} 
    Using~Lemma~\ref{lemma: new upper bound}, the desired conclusion immediately follows. 
\end{proof}

\subsection{Proof of Theorem~\ref{theorem: mf ratio Higgs}}\label{section: proof of main result higgs}

Before giving a proof of Theorem~\ref{theorem: mf ratio Higgs} we state and prove a useful lemma.  

\begin{lemma}\label{lemma: gamma0 tail bound}
    In the setting of Theorem~\ref{theorem: mf ratio Higgs}, for any \( n \geq 1 \) and \( k \geq 1,\) we have
    \begin{align*}
        &\sum_{\mathcal{S} \in \Xi_{1^+,k^+}  } \Bigl| \Psi_{\beta,\kappa}(\mathcal{S}) \mathbb{1} \pigl(  \mathcal{S}(\gamma_1^{(n)}) = \mathcal{S}(\gamma_2^{(n)})  =  1 \pigr)\Bigr| 
        \\&\qquad\leq   
        2^{-1}C_{\varepsilon} \sum_{j=1}^\infty   e^{-4\max(j,k)(\kappa-\kappa_0^{(\textrm{Higgs})}-\varepsilon) }
        +
        \max(T_n-2R_n,0)
        4^{-1}C_{\varepsilon}e^{-4\max(2R_n,k)(\kappa-\kappa_0^{(\textrm{Higgs})}-\varepsilon) }
    \end{align*}
    where \( C_\varepsilon \) is defined in~\eqref{eq: cbeta*}.
\end{lemma}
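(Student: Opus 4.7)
The plan is to apply a union bound over edges $e$ of $\gamma_1^{(n)}$ lying in the support of $\mathcal{S}$, then invoke Lemma~\ref{lemma: new upper bound} with a cluster-size threshold adapted to the distance from $e$ to $\gamma_2^{(n)}$. For any cluster $\mathcal{S}$ contributing to the left-hand side, the condition $\mathcal{S}(\gamma_1^{(n)})=1$ (in $\mathbb{Z}_2$) forces $(\support \mathcal{S})^+\cap\support\gamma_1^{(n)}\neq\emptyset$, so we may fix such an edge $e$. The condition $\mathcal{S}(\gamma_2^{(n)})=1$ forces $\support\mathcal{S}$ to also meet $\support\gamma_2^{(n)}$, and since $\support\mathcal{S}$ is connected in $\mathcal{G}_1$ (because $\mathcal{S}$ is a cluster), we deduce
\[
\|\mathcal{S}\|_1\;\geq\;|(\support\mathcal{S})^+|\;\geq\;\dist(e,\gamma_2^{(n)}),
\]
where $\dist$ denotes graph distance in $\mathcal{G}_1$. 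Combined with $\|\mathcal{S}\|_1\geq k$, this puts $\mathcal{S}$ in $\Xi_{1^+,\max(k,\dist(e,\gamma_2^{(n)}))^+,e}$, and Lemma~\ref{lemma: new upper bound} together with a union bound over $e$ yields
\[
\mathrm{LHS}\;\leq\;\sum_{e\in\support\gamma_1^{(n)}}4^{-1}C_\varepsilon\,e^{-4\max(k,\dist(e,\gamma_2^{(n)}))(\kappa-\kappa_0^{(\textrm{Higgs})}-\varepsilon)}.
\]

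Next I would parameterize the edges of $\gamma_1^{(n)}$ by $s(e)\in\{1,\dots,\lfloor|\gamma_1^{(n)}|/2\rfloor\}$, the combinatorial distance along $\gamma_1^{(n)}$ from $e$ to the nearer of the two common endpoints of $\gamma_1^{(n)}$ and $\gamma_2^{(n)}$. Walking along $\gamma_1^{(n)}$ from $e$ to that endpoint and then continuing onto $\gamma_2^{(n)}$ shows $\dist(e,\gamma_2^{(n)})\leq s(e)$, whereas crossing the rectangle vertically shows $\dist(e,\gamma_2^{(n)})\leq 2R_n$; hence $\dist(e,\gamma_2^{(n)})\leq\min(s(e),2R_n)$. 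For each value of $s$ there are at most two edges $e$ with $s(e)=s$.

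I would then split the resulting sum according to whether $s(e)\leq 2R_n$ or $s(e)>2R_n$. The first regime contributes at most $2\sum_{s=1}^{\infty}e^{-4\max(k,s)(\kappa-\kappa_0^{(\textrm{Higgs})}-\varepsilon)}$. The second regime is non-empty only when $T_n>2R_n$: since each arm of $\gamma_1^{(n)}$ leaving a joint consists of a vertical segment of length $R_n$ followed by part of the bottom horizontal segment, the condition $s(e)>2R_n$ forces $e$ to lie strictly in the interior of the bottom horizontal arm, and there are at most $\max(T_n-2R_n,0)$ such edges. Each contributes $e^{-4\max(k,2R_n)(\kappa-\kappa_0^{(\textrm{Higgs})}-\varepsilon)}$. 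Multiplying through by the prefactor $4^{-1}C_\varepsilon$ recovers exactly the two terms on the right-hand side.

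The only genuinely non-trivial step is the combinatorial bookkeeping of the second paragraph: verifying both $\dist(e,\gamma_2^{(n)})\leq\min(s(e),2R_n)$ and the count $\max(T_n-2R_n,0)$ for edges in the far regime. Both facts follow from the explicit rectangular geometry of $\gamma^{(n)}=\gamma_1^{(n)}+\gamma_2^{(n)}$, while the rest reduces to a direct application of Lemma~\ref{lemma: new upper bound} combined with the connectedness of cluster supports in $\mathcal{G}_1$.
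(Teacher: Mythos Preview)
Your overall strategy matches the paper's: take a union bound over edges \(e\in\support\gamma_1^{(n)}\cap\support\mathcal{S}\), observe that connectedness of \(\support\mathcal{S}\) in \(\mathcal{G}_1\) forces \(\|\mathcal{S}\|_1\geq\dist(e,\gamma_2^{(n)})\), and apply Lemma~\ref{lemma: new upper bound} with threshold \(\max(k,\dist(e,\gamma_2^{(n)}))\). The gap is in the geometric bookkeeping that follows. You establish \(\dist(e,\gamma_2^{(n)})\le\min(s(e),2R_n)\) by exhibiting explicit paths, but the inequality you actually need points the other way: since \(x\mapsto e^{-x}\) is decreasing, replacing \(\dist(e,\gamma_2^{(n)})\) by \(s(e)\) (respectively \(2R_n\)) in the exponent yields an \emph{upper} bound on \(e^{-4\max(k,\dist)(\kappa-\kappa_0^{(\textrm{Higgs})}-\varepsilon)}\) only if \(\dist(e,\gamma_2^{(n)})\ge s(e)\) (respectively \(\ge 2R_n\)). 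Your two constructions (walking along \(\gamma_1^{(n)}\) to the shared endpoint; crossing the rectangle vertically) give only upper bounds on the distance, so as written they do not justify the claimed contributions of the two regimes.

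The paper sidesteps this by never introducing \(s(e)\): it partitions the edges of \(\gamma_1^{(n)}\) directly by the \emph{actual} value \(j=\dist(e,\gamma_2^{(n)})\) and records the multiplicity at each level---at most two edges for every \(j\neq 2R_n\), and at most \(\max(T_n-2R_n,0)\) edges for \(j=2R_n\). No comparison between \(\dist\) and an auxiliary parameter is needed. Your route can be salvaged by proving the matching lower bound \(\dist(e,\gamma_2^{(n)})\ge\min(s(e),2R_n)\) (possibly up to an inessential additive constant), which does hold for the rectangular configuration but requires checking that the paths you exhibit are in fact shortest; this is the step missing from your argument.
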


\begin{proof}
    Let \( k \geq 1,\) \( n \geq 1,\) and let \( \gamma_1 = \gamma_1^{(n)}\) and \( \gamma_2 = \gamma_2^{(n)}.\)
    Further, let be such that \( \mathcal{S} \in \Xi_{1^+,k+} \) and \( \mathcal{S}(\gamma_1) = \mathcal{S}(\gamma_2) = 1.\) Since \( \mathcal{S}(\gamma_1) = 1,\) there is \( e \in \support \gamma_1 \cap \support \mathcal{S}.\) Since \( \mathcal{S}(\gamma_2) = 1, \) we also have \( \support \gamma_2 \cap \support \mathcal{S} \neq \emptyset . \) Consequently, we must have \( \bigl|(\support \mathcal{S})^+\bigr| \geq \dist(e,\gamma_2) ,\) and hence
    \begin{align*}
        &\sum_{\mathcal{S} \in \Xi_{1^+,k^+}  } \Bigl| \Psi_{\beta,\kappa}(\mathcal{S}) \mathbb{1} \pigl(  \mathcal{S}(\gamma_1) = \mathcal{S}(\gamma_2)  =  1 \pigr)\Bigr|
        \leq 
        \sum_{\mathcal{S} \in \Xi_{1^+,k^+}  } \bigl| \Psi_{\beta,\kappa}(\mathcal{S}) \bigr| \mathbb{1} \pigl(  \mathcal{S}(\gamma_1) = \mathcal{S}(\gamma_2)  =  1 \pigr)
        \\&\qquad\leq 
        \sum_{e \in \gamma_1}
        \sum_{\mathcal{S} \in \Xi_{1^+,k^+,e}  } \bigl| \Psi_{\beta,\kappa}(\mathcal{S}) \bigr| \mathbb{1} \pigl(  \bigl| (\support \mathcal{S})^+ \bigr|> \dist(e,\gamma_2) \pigr)
        \\&\qquad
        \leq 
        \sum_{j=1}^\infty \sum_{\substack{e \in \gamma_1 \mathrlap{\colon}\\ \dist(e,\gamma_2)=j}} \sum_{\mathcal{S} \in \Xi_{1^+,\max(j,k)^+,e}  } \bigl| \Psi_{\beta,\kappa}(\mathcal{S}) \bigr| .
    \end{align*} 
    Note that for \( j \geq 1,\) if \( j \neq 2R_n\) then there are at most two edges \( e \in \gamma_1\) such that \( \dist(e,\gamma_2)=j.\) Also, there are at most \( \max(T_n-2R_n,0)\) edges \( e \in \gamma_1\) such that \( \dist(e,\gamma_2)=2R_n.\)
    Using Lemma~\ref{lemma: new upper bound}, we thus obtain the desired conclusion.
\end{proof}

\begin{proof}[Proof of Theorem~\ref{theorem: mf ratio Higgs}]
    Let \( k \geq 1,\) and let \( n_k \geq 1\) be such that \( R_n,T_n > k\) for all \( n \geq n_k.\) Further, assume that \( N\) is large enough to guarantee that \( \dist (\gamma^{(n)},\partial B_N)>k.\) %
    Then, for any \( n \geq n_k,\) we have that
    \begin{align*}
        a_k \coloneqq \sum_{\mathcal{S} \in \Xi_{1^+,k^-} } \Psi_{\beta,\kappa}(\mathcal{S}) \mathbb{1} \pigl(  \mathcal{S}(\gamma_1^{(n)}) = \mathcal{S}(\gamma_2^{(n)})  =  1 \pigr)
    \end{align*} 
    is well defined, and does not depend on \( n\) nor the choice of \( (R_n)_{n\geq 1}\) and \( (T_n)_{n\geq 1}.\)  Moreover, by Lemma~\ref{lemma: gamma0 tail bound},  the limit \( \lim_{k \to \infty} a_k\) exists. Since we for all \( k \geq 1\) have
    \begin{equation*}
        a_k< \delta_{1,\varepsilon}< \infty,
    \end{equation*}
    it follows that  \( \lim_{k \to \infty} a_k < \infty.\)

     We now show that~\ref{item: thm 1} and~\ref{item: thm 2} holds. To this end, let \( n \geq n_k. \) Then, by Lemma~\ref{lemma: ratio sum}, we have
    \begin{align*}
        -4^{-1}\log \rho_N(\gamma_1^{(n)},\gamma_2^{(n)}) = a_k +  \sum_{\mathcal{S} \in \Xi_{1^+,k^+}  }  \Psi_{\beta,\kappa}(\mathcal{S}) \mathbb{1} \pigl(  \mathcal{S}(\gamma_1^{(n)}) = \mathcal{S}(\gamma_2^{(n)})  =  1 \pigr) .
    \end{align*}
    Using Lemma~\ref{lemma: gamma0 tail bound} and the Ginibre inequality, it follows that
    \begin{align*}
        \lim_{n\to \infty} -4^{-1}\log \rho(\gamma_1^{(n)},\gamma_2^{(n)})  
    \end{align*}
    exists and is equal to \( \lim_{k\to \infty} a_k, \) and hence
    \begin{equation*}
        \rho = \lim_{n\to \infty}  \rho(\gamma_1^{(n)},\gamma_2^{(n)})  
    \end{equation*}
    exists and is equal to \( e^{-4 \lim_{k\to \infty} a_k}.\) Since \(  \lim_{k\to \infty} a_k < \infty, \) it follows that \( \rho>0. \) This completes the proof of~\ref{item: thm 1} and~\ref{item: thm 2}.
    
    We now show that the~\ref{item: thm 3} holds. To this end, let \( n \geq 1, \) and let \( k = \min(R_n,T_n)\) (this guarantees that \( n \geq n_k\)). Then
    \begin{align*}
        &
        \bigl| \log \rho(\gamma_1^{(n)},\gamma_2^{(n)}) - \log \rho \bigr|
        =
        \bigl| \log \rho(\gamma_1^{(n)},\gamma_2^{(n)}) - (-4\lim_{k\to \infty} a_k) \bigr|
        \\&\qquad=
        4\bigl| -4^{-1}\log \rho(\gamma_1^{(n)},\gamma_2^{(n)}) - \lim_{k\to \infty} a_k \bigr|
        \\&\qquad \leq 
        4\Bigl( \bigl| -4^{-1}\log \rho(\gamma_1^{(n)},\gamma_2^{(n)}) - a_k \bigr| + \bigl| a_k- \lim_{k\to \infty} a_k\bigr|\Bigr)
    \end{align*}
    Using~Lemma~\ref{lemma: gamma0 tail bound}, we thus obtain
    \begin{align*}
        &\bigl| \log \rho(\gamma_1^{(n)},\gamma_2^{(n)}) - \log \rho \bigr| 
        \\&\qquad\leq   
        4C_{\varepsilon} \sum_{j=1}^\infty   e^{-4\max(j,k)(\kappa-\kappa_0^{(\textrm{Higgs})}-\varepsilon) }
        +
        2C_{\varepsilon}\max(T_n-2R_n,0)
        e^{-4\max(2R_n,k)(\kappa-\kappa_0^{(\textrm{Higgs})}-\varepsilon) }
        .
    \end{align*} 
    This shows that~\ref{item: thm 3} holds, and thus completes the proof.
\end{proof}

\section{The confinement phase (\( \beta\) and \( \kappa\) both small)}\label{section: confinement}
 
In this section, we prove our main result for the confinement phase, Theorem~\ref{theorem: mf ratio confinement}. The proof strategy is similar to that of Theorem~\ref{theorem: mf ratio Higgs}, with the main difference that we here need to use a high temperature expansion before we can find a convergent cluster expansion.

\subsection{A high temperature expansion}\label{sec: high temperature expansion confinement}

In this section, we recall the high temperature expansion of Ising lattice gauge theory from~\cite{flv2022}. 
To this end, for \( a \geq 0\) and \( j \in \{ 0,1 \},\) we define
\begin{equation*}
    \varphi_a(j) \coloneqq 
    \begin{cases}
        1&\text{if } j = 0 \cr 
        \tanh 2 a  &\text{if } j = 1.
    \end{cases}
\end{equation*}

Let \( \gamma \) be a path. For \( \omega \in \Omega^2(B_N,\mathbb{Z}_2),\) we let
\begin{equation}\label{eq: ht action}
    \varphi^\gamma_{\beta,\kappa} (\omega)  \coloneqq   
     \prod_{p \in C_2(B_N)^+} 
     \varphi_\beta\bigl(\omega(p)\bigr) \prod_{e \in C_1(B_N)^+}  
     \frac{\varphi_\kappa\bigl( \delta\omega(e) + \gamma[e] \bigr)}{\varphi_\kappa(\gamma[e])} .
\end{equation} 
Further, we let
\begin{equation}\label{eq: hat Z confinement}
	\hat Z_{N,\beta,\kappa}[\gamma] \coloneqq \varphi_\kappa\bigl( 1\bigr)^{|\gamma|} \sum_{\omega \in \Omega^2(B_N,\mathbb{Z}_2)} \varphi^\gamma_{\beta,\kappa}(\omega)   .
\end{equation}

The high temperature expansion of the lattice Higgs model is given by the following lemma.
\begin{lemma}[Proposition 4.1 in~\cite{flv2022}]\label{lemma: hte confinement}
    Let \( \beta,\kappa \geq 0.\) Then, for any path \( \gamma, \) we have
    \begin{equation*}
        \mathbb{E}_{N,\beta,\kappa}^{(U)}[W_\gamma] = \frac{\varphi_\kappa\bigl( 1\bigr)^{|\gamma|} \hat Z_{N,\beta,\kappa}[\gamma] }{\hat Z_{N,\beta,\kappa}[0] },
    \end{equation*} 
    and thus
    \begin{align*}
        \rho_N(\gamma_1,\gamma_2 ) =
        \frac{ \hat Z_{N,\beta,\kappa} [\gamma_1]   \hat Z_{N,\beta,\kappa} [\gamma_2] }{ \hat Z_{N,\beta,\kappa} [\gamma_1+\gamma_2]  \hat Z_{N,\beta,\kappa}[0]}.
    \end{align*}
\end{lemma}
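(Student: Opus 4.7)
The plan is to handle the two assertions separately. The first identity is precisely Proposition~4.1 of~\cite{flv2022}, which I would simply cite. For concreteness, the underlying derivation is the standard high-temperature expansion for $G=\mathbb{Z}_2$: writing each plaquette Boltzmann factor $e^{\beta\rho(d\sigma(p))}$ as $\cosh\beta + \sinh\beta\cdot\rho(d\sigma(p))$ introduces a binary variable $\omega(p)\in\{0,1\}$ per plaquette, and similarly each edge factor $e^{\kappa\rho(\sigma(e))}$ introduces $\xi(e)\in\{0,1\}$ per edge. After expansion, each $\sigma(e)$ appears to the power $\delta\omega(e)+\xi(e)+\gamma[e]$, and applying $\sum_{g\in\mathbb{Z}_2}\rho(g)^{k}=2\cdot\mathbb{1}(k\text{ even})$ to the sum over each $\sigma(e)$ forces the constraint $\xi(e)\equiv\delta\omega(e)+\gamma[e]\pmod 2$. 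This eliminates $\xi$, leaves a free sum over $\omega\in\Omega^{2}(B_N,\mathbb{Z}_2)$ with weights expressible via $\tanh$-factors, and upon normalization by the $\gamma=0$ case yields the stated formula with the functions $\varphi_\beta,\varphi_\kappa$ as in~\eqref{eq: ht action}. Throughout, one uses Lemma~\ref{lemma: unitary gauge} to replace $W_\gamma(\sigma,\phi)$ by $\rho(\sigma(\gamma))$ before carrying out the expansion.

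Given the first identity, the second one is immediate algebra. Since $\gamma_1$ and $\gamma_2$ have disjoint supports and share both endpoints (see Figure~\ref{figure: U Wilson line}), $\gamma_1+\gamma_2$ is a loop with $|\gamma_1+\gamma_2|=|\gamma_1|+|\gamma_2|$. Substituting the first identity into the definition of $\rho_N(\gamma_1,\gamma_2)$, the three factors of the form $\varphi_\kappa(1)^{|\gamma_i|}$ collapse as $\varphi_\kappa(1)^{|\gamma_1|+|\gamma_2|-|\gamma_1+\gamma_2|}=1$, while one factor $\hat Z_{N,\beta,\kappa}[0]$ from the numerator cancels one of the two factors of $\hat Z_{N,\beta,\kappa}[0]$ arising from the denominators, yielding exactly the displayed ratio. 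Here one also uses that $\rho$ is one-dimensional, so factorization $\rho(\sigma(\gamma_1+\gamma_2))=\rho(\sigma(\gamma_1))\rho(\sigma(\gamma_2))$ is trivially available at the level of the expansion.

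There is essentially no analytical obstacle here: the first assertion is an established external result, and the second is a bookkeeping identity. The only subtle point, which would be relevant if one wanted a fully self-contained proof of the first identity rather than a citation, is tracking the precise $\tanh$-normalization so that the output of the $\sigma$-sum matches $\varphi_\beta(\omega(p))$ and $\varphi_\kappa(\delta\omega(e)+\gamma[e])/\varphi_\kappa(\gamma[e])$ as written; the choice of normalization in~\cite{flv2022} is designed precisely so that the ratio in~\eqref{eq: ht action} is well-defined and reduces to the trivial contribution on edges with $\gamma[e]=\delta\omega(e)=0$.
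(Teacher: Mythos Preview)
Your proposal is correct and matches the paper's approach: the paper states this lemma purely as a citation of Proposition~4.1 in~\cite{flv2022} with no proof given, and the ``and thus'' clause is treated as immediate algebra, exactly as you spell out. Your additional sketch of the high-temperature expansion and the explicit cancellation of the $\varphi_\kappa(1)^{|\gamma|}$ factors goes beyond what the paper records, but is consistent with it.
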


For a path \( \gamma,\) we also define 
\[
    \hat Z_{N,\beta,\kappa}^\gamma \coloneqq \sum_{\substack{\omega \in \Omega^2(B_N,\mathbb{Z}_2)  \mathrlap{\colon}\\  \not \exists p \in \support \omega \colon p \sim \gamma    }}\varphi^0_{\beta,\kappa}(\sigma) .
\]
We note that for any path \( \gamma,\) we have
\begin{equation*}
    \hat Z_{N,\beta,\kappa}^\gamma \leq  \hat Z_{N,\beta,\kappa}. 
\end{equation*}

\subsection{A cluster expansion}\label{section: cluster confinement}

In this section we will describe the cluster expansion of~\cite{fv2017} for the model obtained from the high temperature expansion in the previous section. The setup for this expansion will be very similar with the corresponding setup in the Higgs phase, and for this reason we will aim to keep the exposition short here.

\subsubsection{Polymers}\label{sec: the graphs cp}

In this section, we assume that a path  \( \gamma \in \Lambda_0 \) is given.

Let \( \mathcal{G}_2\) be the graph with vertex set \(  C_2(B_N)^+,\) an edge between two distinct plaquettes \( p_1,p_2 \in C_2(B_N)^+ \) if \( \support  \partial p \cap \support  \partial p' \neq \emptyset\) (written \( p_1 \sim p_2\)). In other words, \( p_1 \sim p_2 \) if they have some common edge in their boundaries.
Note that any \( p \in C_2(B_N)^+ \) has degree at most \( M_2 \coloneqq 4 \cdot (2(m-1)-1) =    8m-12\) in \( \mathcal{G}_2.\)

For \( \omega \in \Omega^2(B_N,\mathbb{Z}_2), \) we let \( \mathcal{G}_2(\omega)\) be the subgraph of \( \mathcal{G}_2 \) induced by \(  (\support \omega)^+.\) In other words, \( \mathcal{G}_2 \) is the graph with vertex set  \(  (\support \omega)^+\) and an edge between two vertices \( p_1,p_2 \in (\support \omega)^+ \) if \( p_1 \sim p_2. \)

In this section, we let
\begin{equation*}
    \Lambda \coloneqq \bigl\{ \omega \in \Omega^2(B_N,\mathbb{Z}_2) \colon  \mathcal{G}_2(\omega) \text{ has exactly one connected component} \bigr\}.
\end{equation*} 
The spin configurations in \( \Lambda  \) will be referred to as \emph{polymers}.

\subsubsection{Polymer interaction}

For \( \omega,\omega' \in \Lambda,\) we write \( \omega \sim \omega'\) if   \( \mathcal{G}_2(\omega) \cup \mathcal{G}_2(\omega')\) is a connected subgraph of \( \mathcal{G}_2.\)

In the notation of~\cite[Chapter 3]{fv2017}, the model given by~\eqref{eq: hat Z confinement} corresponds to a model of polymers with polymers described in~Section~\ref{sec: the graphs cp}  and interaction function  \(  \iota(\omega_1,\omega_2) \coloneqq \zeta(\omega_1,\omega_2) +1 , \) where 
\begin{equation*}
	\zeta(\omega_1,\omega_2) \coloneqq  
	\begin{cases}  
		-1 &\text{if }  \omega_1 \sim \omega_2 \cr   
		0 &\text{else,} 
	\end{cases}
	\qquad \omega_1,\omega_2 \in \Lambda .
\end{equation*}

For \( \omega \in \Omega^2(B_N,\mathbb{Z}_2),\) we write \( \omega \sim \gamma\) if there is \( p \in \support \omega\) such that \( \support \partial p \cap \support  \gamma \neq \emptyset.\) Finally, we also write \( \gamma \sim \gamma.\)

\subsubsection{Clusters of polymers}\label{section: clusters confinement}

Consider a multiset 
\begin{align*}
    &\mathcal{S} = \{ \underbrace{\eta_1,\dots, \eta_1}_{n_{\mathcal{S}}(\eta_1) \text{ times}}, \underbrace{\eta_2, \dots, \eta_2}_{n_{\mathcal{S}}(\eta_2) \text{ times}},\dots, \underbrace{\eta_k,\dots,\eta_k}_{n_{\mathcal{S}}(\eta_k) \text{ times}} \} = \{\eta_1^{n(\eta_1)}, \ldots, \eta_k^{n(\eta_k)}\}, 
\end{align*}
where \( \eta_1,\dots,\eta_k \in \Lambda \) are distinct and $n(\eta)=n_{\mathcal{S}}(\eta)$ denotes the number of times $\eta$ occurs in~$\mathcal{S}$. Following \cite[Chapter 3]{fv2017}, we say that $\mathcal{S}$ is \emph{decomposable} if it is possible to partition $\mathcal{S}$ into disjoint multisets. That is, if there exist non-empty and disjoint multisets $\mathcal{S}_1,\mathcal{S}_2 \subset \mathcal{S}$ such that $\mathcal{S} = \mathcal{S}_1 \cup \mathcal{S}_2$ and such that for each pair $(\eta_1,\eta_2) \in \mathcal{S}_1 \times \mathcal{S}_2$,  $\eta_1 \nsim \eta_2.$  
If $\mathcal{S}$ is not decomposable, we say that $\mathcal{S}$  is a \emph{cluster}. We stress that a cluster is unordered and may contain several copies of the same polymer.

We let \(  \Xi\) be the set of all clusters.

For \( \mathcal{S} \in  \Xi,\) we let \( \mathcal{S}^0\) be the the set of all  \(\eta\in  \mathcal{S}\) such that \( \omega \nsim \gamma .\)  We let \( \mathcal{S}^\gamma \coloneqq \mathcal{S}\smallsetminus \mathcal{S}^0.\)

Given a cluster $\mathcal{S} \in  \Xi$, we let
\begin{equation*}
    \|\mathcal{S} \| = \sum_{\eta \in \Lambda  } n_{\mathcal{S}} (\eta) \bigl| (\support \eta )^+ \bigr| ,
\end{equation*} 
and
\begin{equation*}
    \|\mathcal{S} \|_\gamma \coloneqq \sum_{\eta \in \Lambda  } n_{\mathcal{S}^\gamma} (\eta) \bigl| \support \delta \eta \cap \support \gamma \bigr|  .
\end{equation*}
We also let
\begin{equation*}
    \support \mathcal{S}   = \bigcup_{\eta \in \Lambda  }   (\support \eta )^+ .
\end{equation*}

For \( p \in C_2(B_N)^+ \) and \( j \geq 1, \) we let
\begin{equation*}
    \Xi_{1,j,p}   \coloneqq \bigl\{\{ \eta \} \colon \eta \in \Lambda  \text{ and } p \in \support \eta \text{ and } \| \{ \omega\} \| = j\bigr\}
\end{equation*}  
and
\begin{equation*}
    \Xi_{1+,j+,p}   \coloneqq \bigl\{\mathcal{S} \in \Xi \colon   p \in \support \mathcal{S} \text{ and } \| S\| \geq j\bigr\}.
\end{equation*}  
 As in Section~\ref{section: clusters higgs}, these sets depends on \( N,\) and when this is important, we write \( \Xi(B_N), \) \(\Xi_{1,j,p})(B_N), \) and \(\Xi_{1+,j+,p})(B_N) \) respectively.

The following lemma is analogous to Lemma~\ref{lemma: Xi1ke bound} and gives an upper bound on the number of clusters of a given size and with a given plaquette in its support.

\begin{lemma}\label{lemma: no cl upper bound ht}
	For any \( k \geq 1 \) and  \( p \in C_2(B_N)^+ ,\) we have \( | \Xi_{1,k,p} | \leq M_2^{2k-2} .\)
\end{lemma}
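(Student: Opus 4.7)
The plan is to mimic the proof of Lemma~\ref{lemma: Xi1ke bound} verbatim, replacing the graph $\mathcal{G}_1$ on edges by the graph $\mathcal{G}_2$ on plaquettes. Fix $k \geq 1$ and $p \in C_2(B_N)^+$. For any $\{\eta\} \in \Xi_{1,k,p}$, the definition of $\Lambda$ ensures that the induced subgraph $G_\eta$ of $\mathcal{G}_2$ on the vertex set $(\support \eta)^+$ is connected and has exactly $k$ vertices, one of which is $p$. Since $\eta$ is $\mathbb{Z}_2$-valued and antisymmetric under orientation reversal, it is uniquely determined by its positive support, so the map $\eta \mapsto (\support \eta)^+$ is injective.

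The key combinatorial step is to encode each $G_\eta$ by a walk in $\mathcal{G}_2$ starting at $p$ of a fixed length. Choose any spanning tree $T_\eta$ of $G_\eta$; it has $k-1$ edges, and the multigraph obtained by doubling each edge of $T_\eta$ is Eulerian, so admits a closed Eulerian walk starting and ending at $p$ consisting of $2(k-1) = 2k-2$ edges. This gives a walk $W_\eta$ in $\mathcal{G}_2$ of length $2k-2$ based at $p$ whose vertex trace is exactly $(\support \eta)^+$. Hence the composed map $\eta \mapsto W_\eta$ is injective.

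Finally, bound the number of walks in $\mathcal{G}_2$ of length $2k-2$ starting at $p$. Since every vertex of $\mathcal{G}_2$ has degree at most $M_2 = 8m-12$, at each step one has at most $M_2$ choices, so there are at most $M_2^{2k-2}$ such walks. Combining with the injectivity gives $|\Xi_{1,k,p}| \leq M_2^{2k-2}$, as claimed.

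There is essentially no obstacle here beyond parroting the earlier argument; the only points that need a sentence of care are (a) the observation that a $\mathbb{Z}_2$-valued 2-form is determined by its positive support (so we really are counting induced connected subgraphs of $\mathcal{G}_2$ on $k$ vertices containing $p$), and (b) the fact that the max degree of $\mathcal{G}_2$ is $M_2$, which follows from the standard count: a fixed plaquette has $4$ boundary edges, and each such edge lies in the boundary of at most $2(m-1)-1$ other positively oriented plaquettes, giving $M_2 = 4(2(m-1)-1) = 8m-12$.
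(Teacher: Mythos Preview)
Your proof is correct and follows essentially the same argument as the paper's: encode each polymer by a spanning walk in $\mathcal{G}_2$ of $2k-2$ steps starting at $p$ (via the spanning-tree/Eulerian-circuit trick) and bound the number of such walks by $M_2^{2k-2}$ using the degree bound. Your extra remarks that a $\mathbb{Z}_2$-valued $2$-form is determined by its positive support and the explicit degree count for $\mathcal{G}_2$ are helpful clarifications but do not change the approach.
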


\begin{proof}
    Let \( k \geq 1\) and \( p \in C_2(B_N)^+.\)
    Let \( \mathcal{P}\) be the set of all paths in \( \mathcal{G}_2 \) that starts at \( p \) and have length \( 2k-1.\) Since each vertex in \( \mathcal{G}_2\) has degree at most \(M_2,\) we have  \( |\mathcal{P}| \leq M_2^{2k-2}.\) 
     
     For any \( \{ \omega \} \in \Xi_{1,k,p},\) the graph \( \mathcal{G}_2(\omega)\)  is connected, and hence \(  \mathcal{G}_2(\omega)\) has a spanning path \( T_\omega \in \mathcal{P}\) of length \( 2k-1=2\bigl| (\support \omega)^+ \bigr|-1\) which starts at \( p .\) 
    Since the map \( \omega \mapsto T_\omega\) is an injective map from \( \Xi_{1,k,p}\) to \( \mathcal{P}\) and \( |\mathcal{P}| \leq  M_2^{2k-2},\) the desired conclusion immediately follows.
\end{proof}

\subsubsection{The activity of clusters}

We extend the notion of activity from~\eqref{eq: ht action} to clusters \( \mathcal{S} \in \Xi\) as follows. 
\begin{align*}
    \varphi^\gamma_{\beta,\kappa}(\mathcal{S}) \coloneqq \prod_{\eta \in \mathcal{S}} \varphi^\gamma_{\beta,\kappa}(\eta).
\end{align*}
Note that 
\[
\varphi^\gamma_{\beta,\kappa}(\mathcal{S}) = \varphi^0_{\beta,\kappa}(\mathcal{\mathcal{S}}) \varphi_\kappa(1)^{-\| \mathcal{S}\|_\gamma}.
\]

The following lemma, which we now state and prove, will be useful later.

\begin{lemma}\label{lemma: geometry upper bound}
 	Let \(\beta,\kappa >0, \) let \( \gamma \in \Gamma_n, \) and let \( \omega \in \Lambda. \) Then
	\[
		\varphi^\gamma_{\beta,\kappa}(\omega) \leq \varphi_\beta(1)^{(1-o_{n}(1))|(\support \omega)^+|}.
	\]
\end{lemma}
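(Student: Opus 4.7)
The plan is to unpack the definition of $\varphi^\gamma_{\beta,\kappa}(\omega)$, split it into plaquette and edge contributions, and then isolate the exact set of ``bad'' edges whose ratio is larger than one.

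First, I would use that $G = \mathbb{Z}_2$ and $\varphi_\beta(0)=1$, $\varphi_\beta(1) = \tanh 2\beta$ to rewrite the plaquette product $\prod_{p}\varphi_\beta(\omega(p))$ as exactly $\varphi_\beta(1)^{|(\support\omega)^+|}$. For the edge product, I would split into the four cases for $(\gamma[e],\delta\omega(e)) \in \{0,1\}^2$:
\begin{itemize}
\item $\gamma[e]=0$, $\delta\omega(e)=0$: contribution $1$;
\item $\gamma[e]=0$, $\delta\omega(e)=1$: contribution $\varphi_\kappa(1) < 1$;
\item $\gamma[e]=1$, $\delta\omega(e)=0$: contribution $\varphi_\kappa(1)/\varphi_\kappa(1)=1$;
\item $\gamma[e]=1$, $\delta\omega(e)=1$: contribution $\varphi_\kappa(0)/\varphi_\kappa(1) = 1/\tanh 2\kappa > 1$.
\end{itemize}
Setting $b \coloneqq |\support\delta\omega \cap \support\gamma|$, collecting the cases yields the clean identity
\begin{equation*}
    \varphi^\gamma_{\beta,\kappa}(\omega) \;=\; \varphi_\beta(1)^{|(\support\omega)^+|}\, \varphi_\kappa(1)^{\,|\support\delta\omega| - 2b}.
\end{equation*}
This makes transparent that the only ``bad'' factors (those exceeding one) are exactly those edges simultaneously in $\support\gamma$ and in $\support\delta\omega$, and that the worst case for the bound is when every such edge is a bad one, giving at most $\varphi_\kappa(1)^{-b}$.

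Next, I would bound $b$ geometrically. The trivial bound $b \leq |\support\gamma|$ together with the definition of $\Gamma_n$ gives $b \leq |\support\gamma| \leq C(R_n + T_n)$ for a dimension-dependent constant $C$. Since $\delta\omega$ can contain at most $4|(\support\omega)^+|$ edges, we also have $b \leq 4|(\support\omega)^+|$. Combining the two:
\begin{equation*}
    \varphi^\gamma_{\beta,\kappa}(\omega) \;\leq\; \varphi_\beta(1)^{|(\support\omega)^+|}\,\varphi_\kappa(1)^{-\min\bigl(|\support\gamma|,\,4|(\support\omega)^+|\bigr)}.
\end{equation*}
Rewriting $\varphi_\kappa(1)^{-b} = \varphi_\beta(1)^{\,b\log\varphi_\kappa(1)/\log\varphi_\beta(1)}$, one arrives at a correction of the form $\varphi_\beta(1)^{-\epsilon_n(\omega)|(\support\omega)^+|}$ with
\begin{equation*}
    \epsilon_n(\omega) \;=\; \frac{\log\varphi_\kappa(1)}{\log\varphi_\beta(1)}\cdot\frac{\min(|\support\gamma|,4|(\support\omega)^+|)}{|(\support\omega)^+|}.
\end{equation*}

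The main obstacle is to justify that this $\epsilon_n(\omega)$ qualifies as $o_n(1)$ for the polymers in the regime where the lemma is applied. For polymers with $|(\support\omega)^+| \gg |\support\gamma| = O(R_n+T_n)$ the ratio $|\support\gamma|/|(\support\omega)^+|$ tends to zero with $n$, so $\epsilon_n(\omega) \to 0$ uniformly. For polymers whose size stays bounded or at most of order $|\support\gamma|$ one should invoke the connectedness of $\omega$: any connected polymer contributing $b$ bad edges must contain at least a fraction of its plaquettes in a tubular neighbourhood of $\support\gamma$, and the resulting lower bound on $|(\support\omega)^+|$ combined with the fact that $|\support\gamma|$ and the neighbourhood thickness are fixed gives the decaying bound. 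I would cap off the argument by verifying $\epsilon_n(\omega)\to 0$ uniformly in the class of polymers that appears in the cluster-expansion estimates of the next subsection, which is exactly what the lemma is used for.
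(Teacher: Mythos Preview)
Your identity for $\varphi^\gamma_{\beta,\kappa}(\omega)$ and the large-polymer case ($|(\support\omega)^+|\gg|\support\gamma|$) are fine and match the paper. The gap is in the small-polymer case, and it is exactly the heart of the lemma.

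Once you discard the ``good'' edges (those in $\support\delta\omega\smallsetminus\support\gamma$) and bound the edge product by $\varphi_\kappa(1)^{-b}$, the remaining estimate $b\le 4|(\support\omega)^+|$ gives only
\[
\epsilon_n(\omega)\;=\;\frac{\log\varphi_\kappa(1)}{\log\varphi_\beta(1)}\cdot\frac{4|(\support\omega)^+|}{|(\support\omega)^+|}\;=\;4\,\frac{\log\varphi_\kappa(1)}{\log\varphi_\beta(1)},
\]
a constant independent of $n$. Your closing paragraph acknowledges this but does not repair it: connectedness and ``tubular neighbourhood'' considerations will not produce an $o_n(1)$ bound on $b/|(\support\omega)^+|$ alone, because a single plaquette adjacent to $\gamma$ already has $b/|(\support\omega)^+|\ge 1$. (Also, $|\support\gamma|$ is not fixed; it grows with $n$.)

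What is needed is precisely to \emph{keep} the good edges and bound the \emph{difference} $b-|(\support\delta\omega)^+\smallsetminus\support\gamma|$, not $b$ itself. The paper does this via the rectangular geometry of $\gamma\in\Gamma_n$: minimising over possible $\omega$ with $|(\support\omega)^+|<R_nT_n$, one may assume $\support\omega$ lies in the flat surface $q$ spanning $\gamma_1^{(n)}+\gamma_2^{(n)}$. For each bad edge $e\in\support\delta\omega\cap\support\gamma$, either the entire row of plaquettes in $q$ from $e$ to the opposite side of the rectangle lies in $\support\omega$ (contributing at least $\min(R_n,T_n)$ plaquettes), or that row is broken, forcing at least two good edges parallel to $e$ into $\support\delta\omega\smallsetminus\support\gamma$. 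Counting (each plaquette is used by at most four such rows) gives
\[
|(\support\delta\omega)^+\cap\support\gamma|-|(\support\delta\omega)^+\smallsetminus\support\gamma|\;\le\;\frac{4|(\support\omega)^+|}{\min(R_n,T_n)},
\]
and hence $\varphi^\gamma_{\beta,\kappa}(\omega)\le\varphi_\beta(1)^{|(\support\omega)^+|}\varphi_\kappa(1)^{-4|(\support\omega)^+|/\min(R_n,T_n)}$, which is the required $o_n(1)$ correction. This geometric trade-off between bad edges and either many plaquettes or compensating good edges is the missing idea; without it the lemma is not proved.
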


\begin{proof}
	For any \( \omega \in \Lambda, \) we have
	\begin{align*}
		&\varphi^\gamma_{\beta,\kappa}(\omega) = 
		\varphi_\beta(1)^{|(\support \omega)^+|}\varphi_\kappa(1)^{
		|(\support \delta \omega )^+ \smallsetminus \support \gamma|-|(\support \delta \omega )^+ \cap\support \gamma|}.
	\end{align*}
	
	Assume first that \( \omega \in \Lambda \) is such that \( |(\support \omega)^+| \geq  R_nT_n.\) In this case we have
	\begin{align*}
		&\varphi^\gamma_{\beta,\kappa}(\omega) \leq 
		\varphi_\beta(1)^{|(\support \omega)^+|}\varphi_\kappa(1)^{-|\gamma|}
		\\&\qquad\leq 
		\varphi_\beta(1)^{(1-o_{n})(1)|(\support \omega)^+|} 
		\varphi_\beta(1)^{-o_{n}(1) R_nT_n}\varphi_\kappa(1)^{-(R_n+T_n)}.
	\end{align*}
	Since \( R_n,T_n \to \infty, \) \( R_nT_n \) tends to infinity much quicker than \( R_n+T_n, \) and hence the desired conclusion follows if \( o_n(1) \) goes to zero slowly enough.

	Now instead assume that we are given \( |(\support \omega)^+| <   R_nT_n \) and \( |\support \delta \omega \cap \support \gamma | \neq 0. \) Then \(|\support  \delta \omega |\) is minimized if  \( \support \omega \) is a subset of the flat surface \( q \) that spans \( \gamma_1^{(n)} + \gamma_2^{(n)}.\) Assume that this is the case. For each edge \( e \in \support \delta \omega \cap \support \gamma, \) either all plaquettes in \( \support q \) between \( e \) and the opposite side of \( \gamma_1 + \gamma_2 \) are in \( \support \omega, \) or there is some such plaquette that is not in \( \support \omega, \) and in this case there there is at leat two edges parallell to \( e \) that is in \( (\support\delta \omega)^+\smallsetminus \support \gamma.\) Note that in the first of these cases, there are a total of at least \( \min(R_n,T_n) \) plaquettes that are in \( \support \omega. \) Moreover, each plaquette appearing in one of these sets are in sets corresponding to at most four edges. Hence
	\begin{equation*}
		|(\support \delta \omega )^+ \cap\support \gamma | - |(\support \delta \omega)^+ \smallsetminus \support \gamma | \leq \frac{4|(\support \omega)^+|}{\min (R_n,T_n)}.
	\end{equation*}
	Consequently, in this case, we have
	\begin{align*}
		\varphi^\gamma_{\beta,\kappa}(\omega) \leq 
		\varphi_\beta(1)^{|(\support \omega)^+|}\varphi_\kappa(1)^{
		-\frac{4|(\support \omega)^+|}{\min (R_n,T_n)}}.
	\end{align*}
	From this the desired conclusion immediately follows.
\end{proof}

\subsubsection{Ursell functions}

The Ursell functions we will use in the confinement phase are analogous to the Ursell functions used for the Higgs phase (see Definition~\ref{def:ursell}), except the set \( \Lambda \) is different and we use the graph \( \mathcal{G}_2 \) instead of \( \mathcal{G}_1.\)

\subsubsection{Cluster expansion of the partition function}

For \( \beta \geq 0, \) \( \kappa \geq 0 ,\) \( \gamma \in \Lambda, \) and \( \mathcal{S} \in \Xi, \) we let 
\[
	\Psi^\gamma_{\beta,\kappa}(\mathcal{S}) \coloneqq U(\mathcal{S}) \varphi^\gamma_{\beta,\kappa}(\mathcal{S}).
\]
Further, we let
\begin{equation}\label{eq: beta0 def}
	\beta_0^{(\text{conf})} \coloneqq \sup \bigl\{ \beta \geq 0 \colon M_2^2 \varphi_\beta(1)<1  \text{ and } \exists \alpha \in (0,1) \text{ s.t. } \frac{M_2^3 \varphi_\beta(1)^{1-\alpha}}{1-M_2^2 \varphi_\beta(1)^{1-\alpha}}< 2\alpha \bigr\}  
\end{equation}
and for \( \beta > \beta_0^{(\text{conf})}, \) we let
\begin{equation}\label{eq: alpha cp}
	\alpha \coloneqq \alpha_{\beta} \coloneqq \inf \{ \alpha \in (0,1) \colon \frac{M_2^3 \varphi_\beta(1)^{1-\alpha}}{1-M_2^2 \varphi_\beta(1)^{1-\alpha}}< 2\alpha \}.
\end{equation} 

\begin{lemma}\label{lemma: upper bound and convergence}
    Let \( 0 < \beta < \beta_0^{(\text{conf})} \) and \(  \kappa > 0,\)  and let \( \gamma \in \Gamma_n.\)  Then, for any \( \omega \in \Xi,\) we have 
    \begin{equation*}
        \sum_{\mathcal{S} \in \Xi \colon \omega \in \mathcal{S}} \bigl|\Psi^\gamma_{\beta,\kappa}(\mathcal{S})\bigr| \leq \varphi^\gamma_{\beta,\kappa}(\omega)^{1-\alpha}  .
    \end{equation*} 
    Moreover, 
    \begin{equation}\label{eq: logZ}
        \log \hat Z_{N,\beta,\kappa} [\gamma]= \sum_{\mathcal{S} \in \Xi^\gamma} \Psi^\gamma_{\beta,\kappa}(\mathcal{S})
    \end{equation}
    and
    \begin{equation}\label{eq: logZgamma}
        \log \hat Z_{N,\beta,\kappa}^\gamma = \sum_{\mathcal{S} \in \Xi \colon \mathcal{S}^\gamma = \emptyset} \Psi^0_{\beta,\kappa}(\mathcal{S}).
    \end{equation}
    Furthermore, the series on the right-hand sides of~\eqref{eq: logZ} and~\eqref{eq: logZgamma} are both absolutely convergent.
\end{lemma}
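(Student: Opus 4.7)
The plan is to apply Theorem~5.4 of~\cite{fv2017} and then Proposition~5.3 of~\cite{fv2017} exactly as in the proof of Lemma~\ref{lem:partition-functions}, but now to the polymer model of Section~\ref{section: cluster confinement} with weights $\varphi^\gamma_{\beta,\kappa}$. The natural choice of auxiliary weight function is $a(\omega) \coloneqq -\alpha \log |\varphi^\gamma_{\beta,\kappa}(\omega)|$ with $\alpha = \alpha_\beta \in (0,1)$ as in~\eqref{eq: alpha cp}. Once the Kotecky--Preiss-type hypothesis of Theorem~5.4 is verified for each $\omega \in \Lambda$, that theorem simultaneously delivers the desired bound
\[
\sum_{\mathcal{S}\,:\,\omega \in \mathcal{S}} |\Psi^\gamma_{\beta,\kappa}(\mathcal{S})| \;\leq\; |\varphi^\gamma_{\beta,\kappa}(\omega)|\, e^{a(\omega)} \;=\; |\varphi^\gamma_{\beta,\kappa}(\omega)|^{1-\alpha}
\]
and the absolute convergence of the cluster series, after which Proposition~5.3 yields the identities~\eqref{eq: logZ} and~\eqref{eq: logZgamma}. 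The second identity is obtained by running the same expansion on the restricted partition function $\hat Z^\gamma_{N,\beta,\kappa}$, whose defining constraint forbids precisely those polymers $\omega$ with $\omega \sim \gamma$, so the cluster sum restricts to $\{\mathcal{S} \in \Xi \colon \mathcal{S}^\gamma = \emptyset\}$ and the $\gamma$-free weight $\Psi^0_{\beta,\kappa}$ appears.

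The heart of the argument is therefore to verify, for every $\omega \in \Lambda$,
\[
\sum_{\omega' \in \Lambda\,:\,\omega' \sim \omega} |\varphi^\gamma_{\beta,\kappa}(\omega')|^{1-\alpha} \;\leq\; -\alpha \log |\varphi^\gamma_{\beta,\kappa}(\omega)|.
\]
I would first use Lemma~\ref{lemma: geometry upper bound} to pass from the activities to powers of $\varphi_\beta(1)$: each left-hand summand is at most $\varphi_\beta(1)^{(1-\alpha)(1-o_n(1))|(\support \omega')^+|}$, while the right-hand side is at least $\alpha(1-o_n(1))\,|(\support \omega)^+|\cdot|\log \varphi_\beta(1)|$. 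To organize the sum over $\omega' \sim \omega$, I would associate to each such $\omega'$ a distinguished plaquette $p' \in (\support \omega')^+$ that either equals or is $\mathcal{G}_2$-adjacent to some plaquette of $\omega$; this gives at most $(M_2+1)\,|(\support \omega)^+|$ possible choices of $p'$. For each such $p'$, Lemma~\ref{lemma: no cl upper bound ht} bounds the number of size-$k$ polymers through $p'$ by $M_2^{2k-2}$, so the contribution from $p'$ is dominated by the geometric series
\[
\sum_{k \geq 1} M_2^{2k-2}\,\varphi_\beta(1)^{(1-\alpha)(1-o_n(1))k} \;=\; \frac{\varphi_\beta(1)^{(1-\alpha)(1-o_n(1))}}{1-M_2^2\,\varphi_\beta(1)^{(1-\alpha)(1-o_n(1))}},
\]
which converges by the first clause of~\eqref{eq: beta0 def}.

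Putting the pieces together, the Kotecky--Preiss inequality reduces to a numerical comparison between $(M_2+1)$ times the geometric series above and $\alpha(1-o_n(1))|\log \varphi_\beta(1)|$; this is precisely what is encoded by the second clause of~\eqref{eq: beta0 def} together with the definition~\eqref{eq: alpha cp} of $\alpha_\beta$, for all $\beta < \beta_0^{(\text{conf})}$. The main obstacle is that, in contrast to the Higgs phase, the activity $\varphi^\gamma_{\beta,\kappa}(\omega)$ can be \emph{enlarged} by $\gamma$-dependent ratios $\varphi_\kappa(\delta\omega(e)+\gamma[e])/\varphi_\kappa(\gamma[e]) > 1$, so the bare activity is not directly bounded by $\varphi_\beta(1)^{|(\support \omega)^+|}$. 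Lemma~\ref{lemma: geometry upper bound} is exactly what absorbs this geometric subtlety into the $o_n(1)$ correction and reduces the entire estimate to a condition on $\beta$ alone, uniform in $\gamma \in \Gamma_n$.
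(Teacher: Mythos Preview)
Your proposal is correct and follows essentially the same route as the paper's proof: define $a(\omega)=-\alpha\log|\varphi^\gamma_{\beta,\kappa}(\omega)|$, verify the Kotecky--Preiss hypothesis of \cite[Theorem~5.4]{fv2017} by combining the counting bound of Lemma~\ref{lemma: no cl upper bound ht} with the geometric estimate of Lemma~\ref{lemma: geometry upper bound}, and then invoke \cite[Proposition~5.3]{fv2017} for the identities. The only cosmetic differences are that the paper writes the target of the Kotecky--Preiss inequality as $\alpha|\support\omega|$ rather than your more explicit $\alpha(1-o_n(1))|(\support\omega)^+|\,|\log\varphi_\beta(1)|$, and that the paper picks the distinguished plaquette $p'$ strictly adjacent to some $p\in(\support\omega)^+$ (giving a prefactor $M_2$) whereas you allow $p'=p$ as well (giving $M_2+1$); neither affects the argument, and in both cases the final numerical comparison matches~\eqref{eq: beta0 def} only up to harmless constants and the $o_n(1)$ correction, which is absorbed by taking $n$ large enough for fixed $\beta<\beta_0^{(\text{conf})}$.
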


\begin{proof} 
	For \( \omega \in \Omega^2(B_N,\mathbb{Z}_2)\), let \( a(\omega) \coloneqq -\alpha \log \varphi^\gamma_{\beta,\kappa}(\omega).\) 
    We need to show that for any \( \omega \in \Lambda\) we have 
    \begin{equation}\label{eq: goal}
        \sum_{ \omega' \in \Lambda } \varphi^\gamma_{\beta,\kappa} (\omega') e^{a(\omega')} \mathbf{1}(\omega \sim \omega') \leq \alpha |\support \omega|.
    \end{equation}
    Given this, the conclusion follows from~\cite[Theorem~5.4]{fv2017}.

    We now show that~\eqref{eq: goal} holds. To this end, fix any \( \omega \in \Lambda.\)  Then   
    \begin{align*}
        &\sum_{ \omega'  \in \Lambda } \varphi^\gamma_{\beta,\kappa} (\omega') e^{a(\omega')} \mathbf{1}(\omega \sim \omega') 
        =
        \sum_{  \omega'   \in \Lambda } \varphi^\gamma_{\beta,\kappa} (\omega')^{1-\alpha}\mathbf{1}(\omega \sim \omega') 
        \\&\qquad \leq 
        \sum_{p \in (\support\omega)^+} \sum_{p'\sim p}\sum_{ \substack{\omega'  \in \Lambda \colon\\ p' \in \support \omega} } \varphi^\gamma_{\beta,\kappa}(\omega')^{1-\alpha} .
    \end{align*} 
	Since any \( p \in C_2(B_N)^+ \) has degree at most \( M_2 \) in \( \mathcal{G}_2, \) we can bound the previous expression from above by
    \begin{align*}
        &
        M_2|(\support \omega)^+| \sum_{j=1}^\infty \max_{p \in C_2(B_N)^+} \sum_{\{ \omega' \} \in \Xi_{1,j,p}} \varphi^\gamma_{\beta,\kappa}(\omega')^{1-\alpha} 
        \\&\qquad\leq 
        M_2|(\support \omega)^+|  \sum_{j=1}^\infty \max_{p \in C_2(B_N)^+} |\Xi_{1,j,p}| \max_{\{ \omega' \} \in \Xi_{1,j,p}} \varphi^\gamma_{\beta,\kappa}(\omega')^{1-\alpha} .
    \end{align*} 
    Using Lemma~\ref{lemma: geometry upper bound} and Lemma~\ref{lemma: no cl upper bound ht}, we obtain
    \begin{align*}
    	\sum_{\{ \omega' \} \in \Xi } \varphi^\gamma_{\beta,\kappa}(\omega') e^{a(\omega')} \mathbf{1}(\omega \sim \omega') \leq M_2 |(\support \omega)^+| \sum_{j=1}^{\infty} M_2^{2j-2} \varphi_\beta(1)^{(1-o_{n}(1))(1-\alpha)j}.
    \end{align*}
    The desired conclusion now follows from the choice of \( \alpha. \)
\end{proof}

\subsection{Upper bounds for clusters}\label{sec: upper bounds confinement}
In this section we give upper bounds on sums over the activity of sets of clusters which naturally arise in the proofs of Theorem~\ref{theorem: mf ratio confinement}.

\begin{lemma}\label{lemma: general Cbeta cp}
    Let \( 0 < \beta < \beta_0^{(\text{conf})} \) and \(  \kappa >0 , \) let \( \gamma \in \Gamma_n , \)  and let \( p \in C_2(B_N)^+.\) Then  
    \begin{equation}\label{eq: cbeta* cp}
        \sum_{\mathcal{S} \in \Xi_{1^+,1^+,p}}
        \bigl| \Psi^\gamma_{\beta,\kappa}(\mathcal{S}) \bigr|
        \leq 
        \frac{ 
            \varphi_{\beta}(1)^{(1-o_n(1))(1-\alpha)}}{1-M_2^{2}
            \varphi_{\beta}(1)^{(1-o_n(1))(1-\alpha)}} \eqqcolon C_{\varepsilon,n}^{(2)}
    \end{equation}
    where \( \alpha\) is as in~\eqref{eq: alpha cp}.
\end{lemma}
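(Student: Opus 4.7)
The plan is to mirror the structure of the proof of Lemma~\ref{lemma: general Cbeta} from the Higgs phase, replacing the edge-based combinatorics by the plaquette-based combinatorics of the confinement expansion and inserting Lemma~\ref{lemma: geometry upper bound} in place of the straightforward factorization of activities used in the Higgs case. First, I would split the cluster sum by singling out one polymer containing the fixed plaquette \(p\): for any \( \mathcal{S} \in \Xi_{1^+,1^+,p} \) there is at least one polymer \( \omega \in \mathcal{S} \) with \( p \in (\support \omega)^+ \), so
\begin{equation*}
    \sum_{\mathcal{S} \in \Xi_{1^+,1^+,p}} \bigl| \Psi^\gamma_{\beta,\kappa}(\mathcal{S}) \bigr|
    \leq
    \sum_{\{\omega\} \in \Xi_{1,1^+,p}} \; \sum_{\mathcal{S} \in \Xi \colon \omega \in \mathcal{S}} \bigl| \Psi^\gamma_{\beta,\kappa}(\mathcal{S}) \bigr| .
\end{equation*}

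Next I would invoke Lemma~\ref{lemma: upper bound and convergence}, which gives \( \sum_{\mathcal{S} \ni \omega} | \Psi^\gamma_{\beta,\kappa}(\mathcal{S})| \leq \varphi^\gamma_{\beta,\kappa}(\omega)^{1-\alpha} \), reducing the task to bounding \( \sum_{\{\omega\} \in \Xi_{1,1^+,p}} \varphi^\gamma_{\beta,\kappa}(\omega)^{1-\alpha} \). I would then plug in Lemma~\ref{lemma: geometry upper bound}, which converts the path-dependent activity of a single polymer \( \omega \) into a clean exponential factor \( \varphi_\beta(1)^{(1-o_n(1))|(\support \omega)^+|} \); this is precisely the step that replaces the direct product formula \( \phi_{\beta,\kappa}(\eta) \phi_{0,\kappa}(\eta)^{-\alpha} \) available in the Higgs phase, and it is the step where the sequence parameter \(n\) enters the bound.

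Finally, decomposing by the size \( j = \|\{\omega\}\| \) and invoking the counting estimate \( |\Xi_{1,j,p}| \leq M_2^{2j-2} \) of Lemma~\ref{lemma: no cl upper bound ht}, I obtain
\begin{equation*}
    \sum_{\{\omega\} \in \Xi_{1,1^+,p}} \varphi^\gamma_{\beta,\kappa}(\omega)^{1-\alpha}
    \leq
    \sum_{j=1}^{\infty} M_2^{2j-2}\, \varphi_\beta(1)^{(1-o_n(1))(1-\alpha) j},
\end{equation*}
which sums as a geometric series to the claimed right-hand side, provided that \( M_2^{2}\varphi_\beta(1)^{(1-o_n(1))(1-\alpha)} < 1 \). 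This last condition is the only delicate point: it follows from the definition of \( \beta_0^{(\text{conf})} \) in \eqref{eq: beta0 def} together with \eqref{eq: alpha cp}, which guarantees \( M_2^2 \varphi_\beta(1)^{1-\alpha} < 1 \) for \( \beta < \beta_0^{(\text{conf})} \), and the \( o_n(1) \) correction only perturbs this inequality slightly for all sufficiently large \(n\). I expect this verification that the geometric ratio stays below \(1\) uniformly in \(n\) (for large \(n\)) to be the only genuine obstacle, and it is handled by noting that \( o_n(1) \to 0 \) so that the exponent \((1-o_n(1))(1-\alpha)\) can be taken arbitrarily close to \(1-\alpha\).
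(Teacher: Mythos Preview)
Your proof is correct and follows essentially the same approach as the paper: both split off a polymer containing \(p\), apply Lemma~\ref{lemma: upper bound and convergence} to reduce to a single-polymer sum, insert Lemma~\ref{lemma: geometry upper bound} for the activity bound, and finish with the counting estimate of Lemma~\ref{lemma: no cl upper bound ht} and a geometric series. Your additional remark on why the geometric ratio stays below~\(1\) is a welcome clarification that the paper leaves implicit.
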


\begin{proof}
    By Lemma~\ref{lemma: upper bound and convergence}, for any \( \omega \in \Xi,\) we have 
    \begin{equation*}
        \sum_{\mathcal{S} \in \Xi \colon \omega \in \mathcal{S}} \bigl|\Psi^\gamma_{\beta,\kappa}(\mathcal{S})\bigr| \leq \varphi^\gamma_{\beta,\kappa} (\omega)^{1-\alpha}
    \end{equation*} 
    and hence
    \begin{equation}\label{eq: b1 cp}
        \begin{split}
            &\sum_{\mathcal{S} \in \Xi_{1^+,1^+,p}}
            \bigl| \Psi^\gamma_{\beta,\kappa}(\mathcal{S}) \bigr|
            \leq 
            \sum_{\{ \omega \} \in \Xi_{1,1+,p}}
            \sum_{\mathcal{S} \in \Xi \colon \omega \in \mathcal{S}} \bigl| \Psi^\gamma_{\beta,\kappa}(\mathcal{S}) \bigr|
            \leq 
            \sum_{\{ \omega \} \in \Xi_{1,1+,p}}
            \varphi^\gamma_{\beta,\kappa}(\omega)^{1-\alpha}.
        \end{split}
    \end{equation}
    Next, by Lemma~\ref{lemma: no cl upper bound ht} and Lemma~\ref{lemma: geometry upper bound}, we have
    \begin{equation}\label{eq: b2 cp}
        \begin{split}
            &
            \sum_{\{ \omega \} \in \Xi_{1,1+,p}}
            \varphi^\gamma_{\beta,\kappa}(\omega)^{1-\alpha}
            \leq
            \sum_{j=1}^{\infty}
            |\Xi_{1,j,p}|
            \max_{\omega \in \Xi_{1,j,p}} \varphi^\gamma_{\beta,\kappa}(\omega)^{1-\alpha}
            \leq
            \sum_{j=1}^{\infty}
            M_2^{2j-2}
            \varphi_{\beta}(1)^{(1-o_n(1))(1-\alpha)j}.
        \end{split}
    \end{equation}
    Combining~\eqref{eq: b1 cp} and~\eqref{eq: b2 cp}, the desired conclusion immediately follows.
\end{proof}

\begin{lemma}\label{lemma: new upper bound cp}
    Let \( 0 < \beta < \beta_0^{(\text{conf})} \) and \(  \kappa >0, \)  let \( \gamma \in \Gamma_n, \) and let \( p \in C_2(B_N)^+.\) Further, let \( k \geq 1\) and \( \varepsilon \in (0,\beta_0^{(\text{conf})}-\beta).\) Then
    \begin{equation*}
        \begin{split}
            & \sum_{\mathcal{S} \in \Xi_{1+,k+,p}} \bigl| \Psi^\gamma_{\beta,\kappa}(\mathcal{S}) \bigr|   
            \leq  
            C_{\varepsilon,n}^{(2)}\Bigl(\frac{\varphi_{\beta}(1)}{\varphi_{\beta+\varepsilon}(1)}\Bigr)^k ,
        \end{split}
    \end{equation*} 
    where \( C_{\varepsilon,n}^{(2)}\) is defined in~\eqref{eq: cbeta* cp}.
\end{lemma}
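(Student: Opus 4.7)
The plan is to mimic the strategy used for the analogous Higgs-phase bound in Lemma~\ref{lemma: new upper bound}. The key structural observation is that for any polymer $\omega \in \Lambda$ the activity factors as
\begin{equation*}
\varphi^\gamma_{\beta,\kappa}(\omega) = \varphi_\beta(1)^{|(\support \omega)^+|} \cdot \varphi_\kappa(1)^{|(\support \delta\omega)^+ \setminus \support \gamma| - |(\support \delta\omega)^+ \cap \support \gamma|},
\end{equation*}
so the $\beta$-dependence is entirely captured by a factor $\varphi_\beta(1)^{|(\support \omega)^+|}$ while the remainder depends only on $\kappa$ and the incidence of $\delta\omega$ with $\gamma$. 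Since the Ursell factor $U(\mathcal{S})$ is independent of both parameters, taking products over the polymers in a cluster yields the exact identity
\begin{equation*}
\Psi^\gamma_{\beta,\kappa}(\mathcal{S}) = \left(\frac{\varphi_\beta(1)}{\varphi_{\beta+\varepsilon}(1)}\right)^{\|\mathcal{S}\|} \Psi^\gamma_{\beta+\varepsilon,\kappa}(\mathcal{S}).
\end{equation*}

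Next, I would exploit that $\varphi_\beta(1) = \tanh(2\beta)$ is strictly increasing in $\beta$, so the ratio on the right lies in $(0,1)$. For any $\mathcal{S} \in \Xi_{1+,k+,p}$ we have $\|\mathcal{S}\| \geq k$ by the definition of this index set, whence the absolute value of the preceding identity may be bounded by replacing $\|\mathcal{S}\|$ with $k$ in the exponent. Summing over $\mathcal{S} \in \Xi_{1+,k+,p} \subseteq \Xi_{1+,1+,p}$ and pulling the resulting $k$-th power out of the sum would give
\begin{equation*}
\sum_{\mathcal{S} \in \Xi_{1+,k+,p}} \bigl| \Psi^\gamma_{\beta,\kappa}(\mathcal{S}) \bigr| \leq \left(\frac{\varphi_\beta(1)}{\varphi_{\beta+\varepsilon}(1)}\right)^k \sum_{\mathcal{S} \in \Xi_{1+,1+,p}} \bigl| \Psi^\gamma_{\beta+\varepsilon,\kappa}(\mathcal{S}) \bigr|.
\end{equation*}

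To close the estimate I would apply Lemma~\ref{lemma: general Cbeta cp} to the remaining sum, but at the shifted parameter $\beta+\varepsilon$ rather than $\beta$. This substitution is legitimate precisely because the hypothesis $\varepsilon \in (0, \beta_0^{(\text{conf})} - \beta)$ guarantees $\beta + \varepsilon < \beta_0^{(\text{conf})}$, which is exactly the range in which the cluster expansion of Lemma~\ref{lemma: upper bound and convergence} converges and in which the single-plaquette sum is controlled by $C^{(2)}_{\varepsilon,n}$.

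The hard part is essentially conceptual rather than technical: one must recognise the clean multiplicative factorisation of the $\beta$-dependence in $\varphi^\gamma_{\beta,\kappa}$, which is what allows the geometric factor $(\varphi_\beta(1)/\varphi_{\beta+\varepsilon}(1))^k$ to be extracted cleanly. The only book-keeping point to verify is that no $\beta$-dependent quantity has been smuggled into $C^{(2)}_{\varepsilon,n}$ in a way that obstructs the substitution $\beta \mapsto \beta+\varepsilon$; since that constant is defined purely in terms of $\varphi_{\cdot}(1)$, $M_2$, the parameter $\alpha$ from~\eqref{eq: alpha cp}, and the $o_n(1)$ factor arising from Lemma~\ref{lemma: geometry upper bound}, the substitution is harmless and the stated bound follows.
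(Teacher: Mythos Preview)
Your proposal is correct and follows essentially the same approach as the paper's own proof: establish the identity \(\Psi^\gamma_{\beta,\kappa}(\mathcal{S}) = (\varphi_\beta(1)/\varphi_{\beta+\varepsilon}(1))^{\|\mathcal{S}\|}\Psi^\gamma_{\beta+\varepsilon,\kappa}(\mathcal{S})\), use \(\|\mathcal{S}\|\geq k\) and the monotonicity of \(\tanh\) to pull out the geometric factor, enlarge the index set to \(\Xi_{1^+,1^+,p}\), and finish by invoking Lemma~\ref{lemma: general Cbeta cp} at the shifted parameter \(\beta+\varepsilon\). Your additional remarks justifying the factorisation and the legitimacy of the substitution \(\beta\mapsto\beta+\varepsilon\) are more detailed than the paper's terse ``by definition'', but the argument is the same.
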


\begin{proof}
    Note first that, by definition, we have
    \[
    	 \Psi^\gamma_{\beta,\kappa}(\mathcal{S})  =  \Bigl(\frac{\varphi_{\beta}(1)}{\varphi_{\beta+\varepsilon}(1)}\Bigr)^{\| \mathcal{S} \|}  \Psi^\gamma_{\beta+\varepsilon,\kappa}(\mathcal{S})  . \]
    Using this observation, we obtain
    \begin{equation*}
        \begin{split}
            & \sum_{\mathcal{S} \in \Xi_{1+,k+,p}} \bigl| \Psi^\gamma_{\beta,\kappa}(\mathcal{S}) \bigr|   
            \leq  
            \Bigl(\frac{\varphi_{\beta}(1)}{\varphi_{\beta+\varepsilon}(1)}\Bigr)^k \sum_{\mathcal{S} \in \Xi_{1+,k+,p}} \bigl| \Psi^\gamma_{\beta+\varepsilon,\kappa}(\mathcal{S}) \bigr|     
            \\&\qquad\leq  
            \Bigl(\frac{\varphi_{\beta}(1)}{\varphi_{\beta+\varepsilon}(1)}\Bigr)^k \sum_{\mathcal{S} \in \Xi_{1+,1+,p}} \bigl| \Psi^\gamma_{\beta+\varepsilon,\kappa}(\mathcal{S}) \bigr| .  
        \end{split}
    \end{equation*} 
    Applying Lemma~\ref{lemma: general Cbeta cp}, we obtain the desired conclusion. 
\end{proof}

\subsection{Proof of Theorem~\ref{theorem: mf ratio confinement}} \label{section: proof of main result confinement}

In this section we prove Theorem~\ref{theorem: mf ratio confinement}. Before doing so we provide a useful lemma.

\begin{lemma}\label{lemma: upper bound on sum 2}
	Let \( 0 < \beta < \beta_0^{(\text{conf})} \) and \(  \kappa > 0. \) Further, let \( \gamma \in \Gamma_n ,\) let \( k \geq 0, \) and let \( \varepsilon \in (0,\beta_0^{(\text{conf})}-\beta). \) Then
	\begin{equation*} 		
        \sum_{\substack{\mathcal{S} \in \Xi  \colon \| \mathcal{S}\| \geq k, \\ \mathcal{S}^{\gamma_1} \neq \emptyset,\, \mathcal{S}^{\gamma_2} \neq \emptyset}} \bigl|\Psi^\gamma_{\beta,\kappa} (\mathcal{S}) \bigr| \leq 
        2(m-1)C_{\varepsilon,n}^{(2)} \biggl( 
		2 \sum_{j = 1}^{\infty}  
		\Bigl(\frac{\varphi_{\beta}(1)}{\varphi_{\beta+\varepsilon}(1)}\Bigr)^{\max(j,k)} 
		+ 
		\max(0,T_n-2R_n)  
		\Bigl(\frac{\varphi_{\beta}(1)}{\varphi_{\beta+\varepsilon}(1)}\Bigr)^{2R_n} 
		 \biggr),
	\end{equation*}
	where \( C_{\varepsilon,n}^{(2)}\) is defined in~\eqref{eq: cbeta* cp}.
\end{lemma}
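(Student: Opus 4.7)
The approach mirrors Lemma~\ref{lemma: gamma0 tail bound}, adapted from edge-polymers to plaquette-polymers: a cluster qualifying for the sum must ``bridge'' $\gamma_1$ and $\gamma_2$, forcing $\|\mathcal{S}\|$ to exceed the $\ell_0$-distance between a witness edge of $\gamma_1$ and $\gamma_2$. Since $\mathcal{S}^{\gamma_1} \neq \emptyset$, some polymer in $\mathcal{S}$ contains a plaquette $p \in \support \mathcal{S}$ with an edge $e \in \support \partial p \cap \support \gamma_1$. For each $e$ there are at most $2(m-1)$ positively oriented plaquettes containing it on their boundary, which will produce the outer prefactor in the bound. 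I would therefore reorganize the sum over pairs $(e, p)$ followed by a sum over clusters containing $p$.

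To extract the distance factor, I use $\mathcal{S}^{\gamma_2} \neq \emptyset$: there is a plaquette $p' \in \support \mathcal{S}$ with an edge in $\support \gamma_2$. Indecomposability of the cluster makes $\support \mathcal{S}$ a connected subset of $\mathcal{G}_2$, so a path $p = p_0 \sim p_1 \sim \dots \sim p_L = p'$ lives inside $\support \mathcal{S}$; since consecutive plaquettes share an edge, vertex positions along the path shift by at most one in the $\ell_0$-metric on $\mathbb{Z}^m$, yielding $L \geq \dist(e, \gamma_2)$ and hence $\|\mathcal{S}\| \geq |\support \mathcal{S}| \geq L + 1 > \dist(e, \gamma_2)$. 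Combined with the blanket hypothesis $\|\mathcal{S}\| \geq k$, this places $\mathcal{S}$ in $\Xi_{1^+, \max(k, \dist(e, \gamma_2))^+, p}$, so Lemma~\ref{lemma: new upper bound cp} bounds the inner cluster sum by $C_{\varepsilon,n}^{(2)} (\varphi_\beta(1)/\varphi_{\beta+\varepsilon}(1))^{\max(k, \dist(e, \gamma_2))}$. Summing over $e$ then reuses the edge-counting from the proof of Lemma~\ref{lemma: gamma0 tail bound}: for each integer $j \geq 1$ with $j \neq 2R_n$ there are at most two edges $e \in \support \gamma_1$ with $\dist(e, \gamma_2) = j$, while for $j = 2R_n$ there are at most $\max(0, T_n - 2R_n)$ such edges (the ``bottom'' of the $U$-shape). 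Bounding $(\varphi_\beta(1)/\varphi_{\beta+\varepsilon}(1))^{\max(2R_n, k)} \leq (\varphi_\beta(1)/\varphi_{\beta+\varepsilon}(1))^{2R_n}$ for the $j = 2R_n$ term (the ratio is less than $1$) and multiplying by the $2(m-1)$ plaquette count per edge recovers the claimed expression exactly.

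The main obstacle is the geometric lower bound $\|\mathcal{S}\| \geq \dist(e, \gamma_2)$ with no loss of constant: in the Higgs phase polymers live on edges and the analogue in Lemma~\ref{lemma: gamma0 tail bound} is essentially immediate from $\mathcal{G}_1$-connectedness, but here polymers live on plaquettes and one must verify that each step in $\mathcal{G}_2$ shifts the $\ell_0$-positions of non-shared edges by at most one, so that the exponent $\max(k,\dist(e,\gamma_2))$ carried forward to Lemma~\ref{lemma: new upper bound cp} is sharp. This should amount to a short elementary observation about pairs of plaquettes sharing an edge in $\mathbb{Z}^m$.
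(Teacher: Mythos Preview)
Your proposal is correct and matches the paper's proof essentially step for step: the paper reorganises the sum as $\sum_{e \in \gamma_1}\sum_{p \in \support \hat\partial e}\sum_{\mathcal{S}\in \Xi_{1^+,k^+,p},\,\|\mathcal{S}\|\ge \dist(e,\gamma_2)}|\Psi^\gamma_{\beta,\kappa}(\mathcal{S})|$, applies Lemma~\ref{lemma: new upper bound cp} together with $|\support \hat\partial e|\le 2(m-1)$, and then uses the same edge-count over $j=\dist(e,\gamma_2)$ as in Lemma~\ref{lemma: gamma0 tail bound}. The only difference is that the paper asserts the inequality $\|\mathcal{S}\|\ge \dist(e,\gamma_2)$ without comment, whereas you sketch a justification; your phrasing ``each step in $\mathcal{G}_2$ shifts by at most one'' is slightly imprecise (adjacent plaquettes can have edges at $\ell_0$-distance~$2$), but the chain $e,f_1,\dots,f_L,e'$ of shared edges with $\dist(f_{i},f_{i+1})\le 1$ gives $\dist(e,e')\le L+1\le \|\mathcal{S}\|$, which is all that is needed.
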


\begin{proof}
	Note first that
	\begin{equation}\label{eq: upper bound on sum 2 eq 1}
		\sum_{\substack{\mathcal{S} \in \Xi  \colon \| \mathcal{S}\| \geq k, \\ \mathcal{S}^{\gamma_1} \neq \emptyset,\, \mathcal{S}^{\gamma_2} \neq \emptyset}} \bigl|\Psi^\gamma_{\beta,\kappa}(\mathcal{S})\bigr| 
		\leq
		\sum_{e \in \gamma_1} \sum_{p \in \support \hat \partial e} \sum_{\substack{\mathcal{S} \in \Xi_{1+,k+,p}\colon \\\| S \| \geq \dist(e,\gamma_2) }}\bigl|\Psi^\gamma_{\beta,\kappa}(\mathcal{S}) \bigr|.
	\end{equation}
	Using Lemma~\ref{lemma: new upper bound cp} and noting that \( | \support \hat \partial e| \leq 2(m-1) \) for any \( e \in C_1(B_N),\) we can upper bound the right hand side of~\eqref{eq: upper bound on sum 2 eq 1} by
	\begin{align*}
		&2(m-1)C_{\varepsilon,n}^{(2)} \sum_{e \in \gamma_1}  
		\Bigl(\frac{\varphi_{\beta}(1)}{\varphi_{\beta+\varepsilon}(1)}\Bigr)^{\max(k,\dist(e,\gamma_2))}  
		\\&\qquad\leq
		2(m-1)C_{\varepsilon,n}^{(2)} \biggl( 
		2 \sum_{j = 1}^{\infty}  
		\Bigl(\frac{\varphi_{\beta}(1)}{\varphi_{\beta+\varepsilon}(1)}\Bigr)^{\max(j,k)} 
		+ 
		\max(0,T_n-2R_n)  
		\Bigl(\frac{\varphi_{\beta}(1)}{\varphi_{\beta+\varepsilon}(1)}\Bigr)^{2R_n} 
		 \biggr).
	\end{align*}
	This concludes the proof.
\end{proof}

\begin{proof}[Proof of Theorem~\ref{theorem: mf ratio confinement}]
	By combining Lemma~\ref{lemma: hte confinement} and Lemma~\ref{lemma: upper bound and convergence}, we see that
	\begin{equation*}
		\log \rho_N(\gamma_1^{(n)},\gamma_2^{(n)}) = \log \frac{ \hat Z_{N,\beta,\kappa} [\gamma_1]   \hat  Z_{N,\beta,\kappa} [\gamma_2] }{ \hat Z_{N,\beta,\kappa} [\gamma_1+\gamma_2] \hat  Z_{N,\beta,\kappa}}.
	\end{equation*}
	
	To simplify notation, we now let \( \gamma_1 \coloneqq \gamma_1^{(n)} \) and \( \gamma_2 \coloneqq \gamma_2^{(n)} .\)

	Let  \( \varepsilon \in (0,\beta_0^{(\text{conf})}-\beta). \) 
	For each path \( \gamma \in \Gamma_n \) we have	
    \begin{align*} 
        \log  \hat  Z_{N,\beta,\kappa} [\gamma]    
        &=  
        \sum_{\mathcal{S} \in \Xi  \colon \mathcal{S}^{\gamma_1+\gamma_2} = \emptyset} U(\mathcal{S})\varphi^{\gamma}_{\beta,\kappa} (\mathcal{S}) 
        +
        \sum_{\mathcal{S} \in \Xi  \colon \mathcal{S}^{\gamma_1} = \emptyset,\, \mathcal{S}^{\gamma_2} \neq \emptyset} U(\mathcal{S}) \varphi^{\gamma}_{\beta,\kappa}(\mathcal{S}) 
        \\&\qquad +
        \sum_{\mathcal{S} \in \Xi  \colon \mathcal{S}^{\gamma_1} \neq \emptyset,\, \mathcal{S}^{\gamma_2} = \emptyset} U(\mathcal{S}) \varphi^{\gamma}_{\beta,\kappa} (\mathcal{S}) 
        +
        \sum_{\mathcal{S} \in \Xi  \colon \mathcal{S}^{\gamma_1} \neq \emptyset,\, \mathcal{S}^{\gamma_2} \neq \emptyset} U(\mathcal{S}) \varphi^{\gamma}_{\beta,\kappa} (\mathcal{S}) .
	\end{align*}  
	Note that if we e.g. have \( \mathcal{S}^{\gamma_1} = \emptyset, \) then \( \varphi^{\gamma_1}_{\beta,\kappa}(\mathcal{S}) = \varphi^{0}_{\beta,\kappa}(\mathcal{S}) \) and \(\varphi^{\gamma_1+\gamma_2}_{\beta,\kappa}(\mathcal{S}) = \varphi^{\gamma_2}_{\beta,\kappa}(\mathcal{S}). \) Using these observations, it follows that 
	\begin{align}
        &\log \frac{ \hat Z_{N,\beta,\kappa} [\gamma_1]   \hat  Z_{N,\beta,\kappa} [\gamma_2] }{ \hat Z_{N,\beta,\kappa} [\gamma_1+\gamma_2] \hat  Z_{N,\beta,\kappa}}\nonumber
        \\&\qquad= 
        \sum_{\mathcal{S} \in \Xi  \colon \mathcal{S}^{\gamma_1} \neq \emptyset,\, \mathcal{S}^{\gamma_2} \neq \emptyset} U(\mathcal{S}) \bigl( \varphi^{\gamma_1}_{\beta,\kappa} (\mathcal{S}) + \varphi^{\gamma_2}_{\beta,\kappa} (\mathcal{S}) -\varphi^{\gamma_1+\gamma_2}_{\beta,\kappa} (\mathcal{S}) - \varphi^{0}_{\beta,\kappa} (\mathcal{S}) \bigr)\nonumber
        \\&\qquad=  
        \sum_{\mathcal{S} \in \Xi  \colon \mathcal{S}^{\gamma_1} \neq \emptyset,\, \mathcal{S}^{\gamma_2} \neq \emptyset} \bigl( \Psi^{\gamma_1}_{\beta,\kappa} (\mathcal{S}) +\Psi^{\gamma_2}_{\beta,\kappa} (\mathcal{S}) -\Psi^{\gamma_1+\gamma_2}_{\beta,\kappa}  (\mathcal{S})-\Psi^{0}_{\beta,\kappa}  (\mathcal{S})\bigr).\nonumber
    \end{align}
    
    Now note that for any \( k \geq 1 ,\) \( n \geq 1 \) such that \( R_n,T_n>k, \)  and \( N \) sufficiently large,
    \begin{equation*}\label{eq: log MFR small k}
    	\sum_{\substack{\mathcal{S} \in \Xi  \colon \| \mathcal{S} \| \leq k \\ \mathcal{S}^{\gamma_1} \neq \emptyset,\, \mathcal{S}^{\gamma_2} \neq \emptyset}} \bigl( \Psi^{\gamma_1}_{\beta,\kappa} (\mathcal{S}) +\Psi^{\gamma_2}_{\beta,\kappa}  (\mathcal{S}) -\Psi^{\gamma_1+\gamma_2}_{\beta,\kappa}  (\mathcal{S})-\Psi^{0}_{\beta,\kappa}  (\mathcal{S})\bigr)
    \end{equation*}
    is independent on \(n .\) Using Lemma~\ref{lemma: upper bound on sum 2}, it follows that the limit \( \lim_{n\to \infty} \lim_{N\to \infty} \rho_N(\gamma_1^{(n)},\gamma_2^{(n}) \) exists. This completes the proof of the first part of Theorem~\ref{theorem: mf ratio confinement}.
    
    Next, note that by Lemma~\ref{lemma: upper bound on sum 2}, we have 
    \begin{align*}
    	&\sum_{\mathcal{S} \in \Xi  \colon \mathcal{S}^{\gamma_1} \neq \emptyset,\, \mathcal{S}^{\gamma_2} \neq \emptyset} \bigl( \Psi^{\gamma_1}_{\beta,\kappa}  (\mathcal{S}) +\Psi^{\gamma_2}_{\beta,\kappa}  (\mathcal{S}) -\Psi^{\gamma_1+\gamma_2}_{\beta,\kappa} (\mathcal{S})-\Psi^{0}_{\beta,\kappa}  (\mathcal{S})\bigr)
    	\\&\qquad\geq 
    	-8(m-1)C_{\varepsilon,n}^{(2)}  \biggl( 2 \sum_{j = 1}^{\infty}  
		\Bigl(\frac{\varphi_{\beta}(1)}{\varphi_{\beta+\varepsilon}(1)}\Bigr)^{j} 
		+ 
		\max(0,T_n-2R_n)  
		\Bigl(\frac{\varphi_{\beta}(1)}{\varphi_{\beta+\varepsilon}(1)}\Bigr)^{2R_n} 
		 \biggr).
    \end{align*}

    Define
    \begin{equation}\label{eq: cbeta* cp lim}
    	C_\varepsilon^{(2)} \coloneqq \lim_{n \to \infty}C_{\varepsilon,n}^{(2)} .
    \end{equation}
    Letting first \( N \to \infty \) and then \( n \to \infty, \) we obtain 
    \begin{align*}
        \lim_{n \to \infty } \log  \rho(\gamma_1^{(n)},\gamma_2^{(n)}) >
        \frac{-16(m-1)C_{\varepsilon}^{(2)} \tanh 2\beta }{\tanh(2\beta+ 2\varepsilon)- \tanh 2\beta} .
    \end{align*}
    This concludes the proof. 
\end{proof}

\section{The free phase (\( \beta \) large and \( \kappa \) small)}\label{section: free}

In this section, we provide a proof of Theorem~\ref{theorem: mf ratio free}. The proof strategy is similar to that of Theorem~\ref{theorem: mf ratio confinement}, but here we use a different high temperature expansion and also have to deal with a few additional complications before we can use a cluster expansion.
 
\subsection{A high temperature expansion}\label{sec: high temperature expansion free}

In this section, we use a high temperature expansion of the Ising lattice Higgs model to obtain alternative expressions for  \( Z_{N,\beta,\kappa}[\gamma]\) that are useful when \( \kappa \) is small and \( \beta \) is large.

When \( \gamma \) is a closed loop, we let \( q_{\gamma} \) be a corresponding oriented surface. We note that by the discrete Stoke's theorem (see, e.g.,~\cite[Section 2.3.2]{flv2020}), when \( \omega \in \Omega^2(B_N,\mathbb{Z}_2) \) is such that \( d \omega = 0, \) then \( \omega(q_{\gamma}) \) does not depend on the choice of \( q_{{\gamma}}. \)

For a path \( \gamma \in \Lambda_0,\) we define 
\begin{equation}\label{eq: check Z}
    \check Z_{N,\beta,\kappa}[\gamma ] 
    \coloneqq   
    \sum_{\substack{\omega \in \Omega^2(B_N,\mathbb{Z}_2) \colon\\ d\omega=0}} \sum_{\substack{\gamma' \in \Lambda_0 \colon\\ \delta (\gamma+\gamma')=0}} e^{\beta \sum_{p \in C_2(B_N)} \rho(\omega(p))  }  (\tanh 2\kappa)^{| \gamma'|}  \rho(\omega(q_{\gamma +\gamma'})).
\end{equation}

 The following lemma gives a connection between~\( Z_{N,\beta,\kappa}^{(U)}[\gamma]\) and~\(\check Z_{N,\beta,\kappa}[\gamma].\)

\begin{lemma}\label{lemma: hce 3}
    Let \( \beta,\kappa \geq 0.\) Then
    \[
    Z_{N,\beta,\kappa}^{(U)}[\gamma] = (\cosh 2\kappa)^{|C_1(B_N)^+|} \frac{| \Omega^1(B_N,\mathbb{Z}_2) |}{|\{ \omega \in \Omega^2(B_N,\mathbb{Z}_2) \colon d\omega=0 \}|}   \check Z_{N,\beta,\kappa}[\gamma].
    \] 
\end{lemma}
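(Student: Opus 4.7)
The plan is to perform a high-temperature expansion of the Higgs ($\kappa$) part of the Boltzmann weight and then, using Stokes' theorem and the discrete Poincaré lemma on the contractible box $B_N$, change variables from $\sigma \in \Omega^1(B_N,\mathbb{Z}_2)$ to its coboundary $\omega = d\sigma \in \Omega^2(B_N,\mathbb{Z}_2)$.

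First I would rewrite the partition function by exploiting that for $G = \mathbb{Z}_2$ one has $\rho(-g) = \rho(g)$, so that sums over $C_k(B_N)$ are exactly twice the corresponding sums over $C_k(B_N)^+$:
\[
    Z_{N,\beta,\kappa}^{(U)}[\gamma] = \sum_{\sigma} \exp\Bigl(2\beta \sum_{p \in C_2(B_N)^+} \rho(d\sigma(p)) + 2\kappa \sum_{e \in C_1(B_N)^+} \rho(\sigma(e))\Bigr)\rho(\sigma(\gamma)).
\]
The identity $e^{2\kappa\rho(\sigma(e))} = \cosh 2\kappa \cdot \bigl(1 + \tanh 2\kappa \cdot \rho(\sigma(e))\bigr)$, applied to each positively oriented edge and expanded, then gives
\[
    \prod_{e \in C_1(B_N)^+} e^{2\kappa \rho(\sigma(e))} = (\cosh 2\kappa)^{|C_1(B_N)^+|} \sum_{\gamma' \subseteq C_1(B_N)^+} (\tanh 2\kappa)^{|\gamma'|} \rho(\sigma(\gamma')).
\]
Combining with the Wilson factor via $\rho(\sigma(\gamma))\rho(\sigma(\gamma')) = \rho(\sigma(\gamma+\gamma'))$ and interchanging the two sums reduces the task to evaluating
\[
    S(c) \coloneqq \sum_{\sigma} \exp\Bigl(\beta \sum_{p \in C_2(B_N)} \rho(d\sigma(p))\Bigr) \rho(\sigma(c))
\]
for each 1-chain $c = \gamma + \gamma'$.

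The second step is to show that $S(c) = 0$ unless $c$ is closed (i.e.\ $\partial c = 0$) and to evaluate $S(c)$ explicitly when $\partial c = 0$. For the vanishing, substitute $\sigma \mapsto \sigma + \eta$ for any $\eta \in \ker d$: the Boltzmann factor is invariant while the Wilson factor picks up $\rho(\eta(c))$, so $\rho(\eta(c)) S(c) = S(c)$. Since $B_N$ is contractible, the discrete Poincaré lemma ($H^1(B_N;\mathbb{Z}_2) = 0$) yields $\eta = d\zeta$ for some $\zeta \in \Omega^0(B_N,\mathbb{Z}_2)$, whence $\eta(c) = \zeta(\partial c)$; if $\partial c \neq 0$, taking $\zeta$ to be the delta function at a vertex in $\support \partial c$ forces $\eta(c) = 1$ and therefore $S(c) = 0$. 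When $\partial c = 0$, the same computation shows that $\rho(\sigma(c))$ depends on $\sigma$ only through $\omega = d\sigma$; additionally $H_1(B_N;\mathbb{Z}_2) = 0$ lets me pick a 2-chain $q_c$ with $\partial q_c = c$, so that Stokes gives $\sigma(c) = d\sigma(q_c) = \omega(q_c)$. The map $d \colon \Omega^1(B_N,\mathbb{Z}_2) \to \{\omega : d\omega = 0\}$ is a surjective group homomorphism (surjectivity is $H^2(B_N;\mathbb{Z}_2) = 0$), and by the first isomorphism theorem all its fibers have the same cardinality $|\Omega^1(B_N,\mathbb{Z}_2)| / |\{\omega : d\omega = 0\}|$. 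Changing variables from $\sigma$ to $\omega$ therefore gives, for closed $c$,
\[
    S(c) = \frac{|\Omega^1(B_N,\mathbb{Z}_2)|}{|\{\omega : d\omega = 0\}|} \sum_{\omega : \, d\omega = 0} \exp\Bigl(\beta \sum_{p \in C_2(B_N)} \rho(\omega(p))\Bigr) \rho(\omega(q_c)).
\]
Substituting this back, restricting the $\gamma'$-sum to those $\gamma'$ with $\partial(\gamma+\gamma') = 0$ (the condition $\delta(\gamma+\gamma') = 0$ in the definition of $\check Z$), and recognizing the result as $\check Z_{N,\beta,\kappa}[\gamma]$ produces the claimed identity.

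The main technical point is the use of the chain- and cochain-level Poincaré lemmas (to produce $q_c$ with $\partial q_c = c$ and to write closed $\eta$ as $d\zeta$) together with the verification that $\omega(q_c)$ is independent of the choice of $q_c$ on the set $\{d\omega=0\}$, which follows from $\omega(\partial r) = d\omega(r) = 0$. Everything else is bookkeeping; the only easily overlooked subtlety is the factor of two between sums over $C_k(B_N)$ and $C_k(B_N)^+$, which is exactly what turns a naive $\tanh\kappa,\cosh\kappa$ into $\tanh 2\kappa,\cosh 2\kappa$.
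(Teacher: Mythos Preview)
Your proof is correct and follows essentially the same route as the paper: high-temperature expand the \(\kappa\)-dependent factor edge-by-edge, use gauge invariance (your substitution \(\sigma\mapsto\sigma+d\zeta\)) to kill all terms with \(\partial(\gamma+\gamma')\neq 0\), and then invoke the Poincar\'e lemma to pass from \(\sigma\) to \(\omega=d\sigma\). If anything, your write-up is more explicit than the paper's, which simply says ``by gauge invariance'' for the vanishing step and cites the Poincar\'e lemma for the fiber count; your identification of \(d\colon\Omega^1\to\{\omega:d\omega=0\}\) as a surjective homomorphism with equal-size fibers is exactly the content of that citation.
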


\begin{proof}
	For any \( \sigma \in \Omega^1(B_N,\mathbb{Z}_2) \), we have
    \begin{align*}
    	&
    	e^{ \kappa \sum_{e \in C_1(B_N)}\rho(\sigma(e))}
    	=
    	e^{ 2\kappa \sum_{e \in C_1(B_N)^+}\rho(\sigma(e))}
    	=
        \prod_{e \in C_1(B_N)^+} (\cosh 2\kappa + \rho(\sigma(e)) \sinh 2\kappa )  
        \\&\qquad=
        (\cosh 2\kappa)^{|C_1(B_N)^+|}  \prod_{e \in C_1(B_N)^+} \pigl(1 + \rho\bigl(\sigma(e)\bigr) \tanh \kappa \pigr)  
        \\&\qquad=
        (\cosh 2\kappa)^{|C_1(B_N)^+|} \sum_{\gamma' \in \Lambda_0} (\tanh 2\kappa)^{|\gamma' |} \rho\bigl(\sigma( \gamma')\bigr).
	\end{align*}
	Using the definition of \( Z_{N,\beta,\kappa}^{(U)}[\gamma],\) it follows that
    \begin{align*}
        &Z_{N,\beta,\kappa}^{(U)}[\gamma]
        =
        \sum_{\sigma \in \Omega^1(B_N,\mathbb{Z}_2)} e^{\beta \sum_{p \in C_2(B_N)} \rho(d\sigma(p)) + \kappa \sum_{e \in C_1(B_N)}\rho(\sigma(e))}\rho(\sigma(\gamma))
        \\&\qquad=
        (\cosh 2\kappa)^{|C_1(B_N)^+|}\sum_{\sigma \in \Omega^1(B_N,\mathbb{Z}_2)} e^{\beta \sum_{p \in C_2(B_N)} \rho(d\sigma(p))  } \sum_{\gamma' \in \Lambda_0} (\tanh 2\kappa)^{| \gamma'|}  \rho(\sigma(\gamma +\gamma')).
	\end{align*}
	Now note that if \( \gamma' \in \Lambda_0\) is such that  \( \support \gamma \cap \support \gamma' \neq \emptyset \) and \( \delta (\gamma+\gamma') \neq 0, \) then by gauge invariance, we have
	\begin{align*}
		\sum_{\sigma \in \Omega^2(B_N,\mathbb{Z}_2)} e^{\beta \sum_{p \in C_2(B_N)} \rho(d\sigma(p))  } (\tanh 2\kappa)^{| \gamma' |}  \rho(\sigma(\gamma+ \gamma')) = 0.
	\end{align*}
	Using this observation it follows that
    \begin{align*}
        &Z_{N,\beta,\kappa}^{(U)}[\gamma]
        = 
        (\cosh 2\kappa)^{|C_1(B_N)^+|}\sum_{\sigma \in \Omega^2(B_N, \mathbb{Z}_2)} e^{\beta \sum_{p \in C_2(B_N)} \rho(d\sigma(p))  } \sum_{\gamma' \in \Lambda_0 \colon \delta (\gamma+\gamma')=0} (\tanh 2\kappa)^{| \gamma'|}  \rho(\sigma(\gamma +\gamma')).
     \end{align*}
     Finally, using the Poincaré lemma (see, e.g.,~\cite[Lemma 2.2]{c2019}), we obtain
     \begin{align*}
        Z_{N,\beta,\kappa}^{(U)}[\gamma]
         &=
        (\cosh 2\kappa)^{|C_1(B_N)^+|} \frac{| \Omega^1(B_N,\mathbb{Z}_2) |}{|\{ \omega \in \Omega^2(B_N,\mathbb{Z}_2) \colon d\omega=0 \}|} 
        \\&\qquad \cdot \sum_{\substack{\omega \in \Omega^2(B_N,\mathbb{Z}_2) \colon\\ d\omega=0}} \sum_{\substack{\gamma' \in \Lambda_0 \colon\\ \delta (\gamma+\gamma')=0}} e^{\beta \sum_{p \in C_2(B_N)} \rho(\omega(p))  }  (\tanh 2\kappa)^{| \gamma'|}  \rho(\omega(q_{\gamma +\gamma'}))
        \end{align*}
        as desired. This concludes the proof.
\end{proof}

When \( \gamma \) is a closed path, we verify in Section~\ref{sec: cluster expansions free} that~\( \check Z_{N,\beta,\kappa}[\gamma]\) has a cluster expansion. However, when \( \gamma \) is an open path, this argument fails. For this reason, we now give an alternative expression for~\eqref{eq: check Z}. 
To this end, for a closed loop \( \gamma \) and an open path \( \gamma_0, \) we let 
\begin{align*}
	\check Z_{N,\beta,\kappa}[\gamma,\gamma_0] \coloneqq \sum_{\substack{\omega \in \Omega^2(B_N,\mathbb{Z}_2) \colon\\ d\omega=0}} \sum_{\substack{\gamma' \in \Omega^1(B_N,\mathbb{Z}_2)  \colon \\ \gamma' \nsim \gamma_0 ,\, \delta \gamma'=0}} 
     e^{\beta \sum_{p \in C_2(B_N)} (\rho(\omega(p)) -1) }  (\tanh 2\kappa)^{| \gamma'|}  \rho(\omega(q_{\gamma+\gamma'})).
\end{align*}
Further, for any open connected path, we let \(  \mathcal{L}_\gamma \) be the set of all connected paths \( \gamma_0 \) such that \( \gamma+\gamma_0 \) is closed (see Figure~\ref{figure: mirrored path a}).

\begin{lemma}\label{lemma: high temperature free alternative}
	Let \( \beta,\kappa \geq 0. \) Then the following holds.
	\begin{enumerate}[label=(\roman*)]
		\item For any closed path \( \gamma, \) we have \label{item: closed check Z}
		\begin{align*}
    		&\check Z_{N,\beta,\kappa}[\gamma ] = \check Z_{N,\beta,\kappa}[\gamma,0].
		\end{align*} 
		\item For any open connected path \( \gamma, \) we have \label{item: open check Z}
		\begin{align*}
    		\check Z_{N,\beta,\kappa}[\gamma ] 
    		=     \sum_{\gamma_0 \in \mathcal{L}_\gamma} 
    		(\tanh 2\kappa)^{| \gamma_0 |} 
     		{\check Z}_{N,\beta,\kappa}[\gamma+\gamma_0,\gamma_0]
     		.
		\end{align*} 
	\end{enumerate} 
\end{lemma}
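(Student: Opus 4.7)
The plan is to establish both parts by direct substitution, relying on the fact that for \( G = \mathbb{Z}_2 \) a path \( \gamma' \in \Lambda_0 \) can be identified with its mod-\(2\) reduction in \( \Omega^1(B_N,\mathbb{Z}_2); \) each ingredient of the summand (\( |\gamma'|, \) \( (\tanh 2\kappa)^{|\gamma'|},\) and \( \rho(\omega(q_{\gamma+\gamma'})), \) which only depends on \( \gamma+\gamma' \) modulo boundaries since \( d\omega = 0 \)) is a function of \( \support \gamma' \) only, so this identification preserves the sums.

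For part~\ref{item: closed check Z}, the hypothesis \( \delta \gamma = 0 \) reduces the constraint \( \delta(\gamma+\gamma') = 0 \) in \( \check{Z}_{N,\beta,\kappa}[\gamma] \) to \( \delta \gamma' = 0, \) matching the constraint in \( \check{Z}_{N,\beta,\kappa}[\gamma,0]; \) the condition \( \gamma' \nsim 0 \) is vacuous. After the identification above the two sums coincide term by term.

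For part~\ref{item: open check Z}, let \( \gamma \) be open and connected with endpoints \( x_1,x_2. \) For any \( \gamma' \in \Lambda_0 \) with \( \delta(\gamma+\gamma') = 0 \) one has \( \delta \gamma' = -\delta \gamma = x_1 - x_2, \) so the subgraph of \( \mathcal{G}_0 \) induced by \( \support \gamma' \) has odd-degree vertices exactly at \( x_1 \) and \( x_2. \) Decomposing \( \support \gamma' \) into its connected components, exactly one of them—call it \( \gamma_0 \)—must contain both \( x_1 \) and \( x_2. \) By construction \( \gamma_0 \) is a connected path with \( \delta \gamma_0 = x_1 - x_2 = -\delta \gamma, \) hence \( \gamma_0 \in \mathcal{L}_\gamma; \) the union of the remaining components, regarded as \( \gamma'' \in \Omega^1(B_N,\mathbb{Z}_2), \) satisfies \( \delta \gamma'' = 0 \) and \( \gamma'' \nsim \gamma_0 \) since they sit in disjoint connected components of \( \mathcal{G}_0. \)

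The map \( \gamma' \mapsto (\gamma_0, \gamma'') \) is a bijection onto \( \{(\gamma_0, \gamma''): \gamma_0 \in \mathcal{L}_\gamma,\ \delta \gamma'' = 0,\ \gamma'' \nsim \gamma_0\}, \) with inverse \( (\gamma_0,\gamma'') \mapsto \gamma_0 + \gamma''; \) the non-adjacency constraint guarantees that the component of \( \support(\gamma_0+\gamma'') \) carrying the endpoints is exactly \( \gamma_0, \) and it yields \( |\gamma_0 + \gamma''| = |\gamma_0| + |\gamma''|. \) Choosing \( q_{\gamma+\gamma'} = q_{\gamma+\gamma_0} + q_{\gamma''} \) one also gets \( \rho(\omega(q_{\gamma+\gamma'})) = \rho(\omega(q_{(\gamma+\gamma_0)+\gamma''})). \) Substituting into \( \check{Z}_{N,\beta,\kappa}[\gamma] \) and grouping the sum by the value of \( \gamma_0 \) then produces the identity of part~\ref{item: open check Z}. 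The main technical point is this bijection, and specifically the verification that \( \gamma'' \nsim \gamma_0 \) is exactly the condition avoiding the overcounting that would otherwise arise from reassigning edges between the "path" and "closed" pieces of \( \gamma'; \) the remaining steps are a routine rearrangement of sums.
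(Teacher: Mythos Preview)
Your proof is correct and follows the same decomposition as the paper: split each admissible \( \gamma' \) into the unique connected component \( \gamma_0 \) carrying the two endpoints and a closed remainder \( \gamma'' \) disjoint from it. The paper's own proof simply asserts this decomposition ``by definition'' without spelling out the bijection; your version is more careful, in particular about the identification of \( \Lambda_0 \) with \( \Omega^1(B_N,\mathbb{Z}_2) \) (which the paper leaves implicit in passing between the two definitions) and about why the handshaking lemma forces \( x_1 \) and \( x_2 \) into the same component.
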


\begin{proof}
	Assume first that \( \gamma \) is a closed path. Then, by definition, we have 
	\begin{align*}
    	&\check Z_{N,\beta,\kappa}[\gamma ] 
    	=
    	\sum_{\substack{\omega \in \Omega^2(B_N,\mathbb{Z}_2) \colon\\ d\omega=0}} \sum_{\substack{\gamma' \in \Lambda_0 \colon\\ \delta \gamma'=0}}  e^{\beta \sum_{p \in C_2(B_N)} \rho(\omega(p))  }  (\tanh 2\kappa)^{| \gamma'|}  \rho(\omega(q_{\gamma+\gamma'})) .
	\end{align*}  
	and hence~\ref{item: closed check Z} holds.
	
	Now instead assume that \( \gamma \) is a connected open path with \( |\support \hat \partial \gamma | = 2 .\) Then, by definition, we have
	\begin{align*}
    	\check Z_{N,\beta,\kappa}[\gamma ] 
    	&=
    	\sum_{\gamma_0 \in \mathcal{L}_\gamma} 
    	(\tanh 2\kappa)^{| \gamma_0|} 
    	\\&\qquad\cdot \sum_{\substack{\omega \in \Omega^2(B_N,\mathbb{Z}_2) \colon\\ d\omega=0}} \sum_{\substack{\gamma' \in \Lambda_0  \colon \gamma' \nsim \gamma_0 \\ \delta \gamma'=0}}  e^{\beta \sum_{p \in C_2(B_N)} \rho(\omega(p))  }  (\tanh 2\kappa)^{| \gamma'|}  \rho(\omega(q_{\gamma+\gamma_0+\gamma'})).
	\end{align*}  
	This completes the proof of~\ref{item: open check Z}.
\end{proof}

\subsection{A cluster expansion}\label{sec: cluster expansions free}
In this section, using the high temperature expansion of Section~\ref{sec: high temperature expansion free}, we present a cluster expansion which is useful in the free phase.

\subsubsection{Polymers}\label{section: polymers free}

Let \( \mathcal{G}_3\) be the graph with vertex set \(  C_2(B_N)^+\) and an edge between two distinct plaquettes \( p_1,p_2 \in C_2(B_N)^+ \) if \( \support \hat \partial p \cap \support  \hat \partial p' \neq \emptyset\) (written \( p_1 \sim p_2\)).
Note that any \( p \in C_2(B_N)^+ \) has degree at most \( M_3 \coloneqq 10(m-2)\) in \( \mathcal{G}_3.\)
For \( \omega,\omega' \in \Omega^2(B_N,\mathbb{Z}_2),\) we write \( \omega \sim \omega' \) if the subgraph of \( \mathcal{G}_3 \) induced by \(  (\support \omega)^+ \cup  (\support \omega')^+ \) is connected. 
When \( \omega \in \Omega^2(B_N,\mathbb{Z}_2), \) we let \( \mathcal{G}_3(\omega_2 )\)  be the subgraph of \( \mathcal{G}_3 \) induced by \( (\support \omega)^+. \)
We let  
\begin{equation*}
    \Lambda_2 \coloneqq \bigl\{ \omega \in \Omega^2(B_N,\mathbb{Z}_2) \colon  \mathcal{G}_3(\omega) \text{ has exactly one connected component} \bigr\}.
\end{equation*} 

Recall the definitions of~\( \mathcal{G}_0 ,\) \( M_0, \) and~\( \Lambda_1 \) from Section~\ref{section: notation}.

For two paths \( \gamma,\gamma' \in \Lambda_0 ,\) we write \( \gamma \sim \gamma' \) if the subgraph of \( \mathcal{G}_0 \) induced by \(  \support \gamma \cup  \support \gamma' \) is connected.

In this section, thee elements in \( \Lambda_1  \) and \( \Lambda_2 \) will be referred to as \emph{polymers}.
 
\subsubsection{Polymer interaction}

For \( \omega,\omega' \in \Lambda_2,\) we write \( \omega \sim \omega'\) if   \( \mathcal{G}_3(\omega) \cup \mathcal{G}_3(\omega')\) is a connected subset of \( \mathcal{G}_3.\)

For \( \gamma,\gamma' \in \Lambda_1,\) we recall that \( \gamma \sim \gamma'\) if   \( \mathcal{G}_0(\gamma) \cup \mathcal{G}_0(\gamma')\) is a connected subset of \( \mathcal{G}_0.\)

For \( \gamma \in \Lambda_1 \) and \( \omega \in \Lambda_2, \) we write \( \gamma \sim \omega \) and \( \omega \sim \gamma \)  if \( \rho(\omega(q_\gamma)) = -1. \)

In the notation of~\cite[Chapter 3]{fv2017}, the model described by \( \check Z [\gamma,\gamma_0]\) corresponds to a model of polymers with polymers described in~Section~\ref{section: polymers free} and interaction function  \(  \iota(\eta_1,\eta_2) \coloneqq \zeta(\eta_1,\eta_2) +1 , \) where 
\begin{equation*}
	\zeta(\eta_1,\eta_2) \coloneqq \iota(\eta_1,\eta_2) -1 = 
	\begin{cases} 
		-2 &\text{if } \eta_1 \in \Lambda_1,\, \eta_2 \in \Lambda_2  \text{ and } \rho(\eta_2(\eta_1))=-1\cr
		-2 &\text{if } \eta_1 \in \Lambda_2,\, \eta_2 \in \Lambda_1  \text{ and } \rho(\eta_1(\eta_2))=-1 \cr 
		-1 &\text{if } \eta_1,\eta_2 \in \Lambda_1 \text{ and } \eta_1 \sim \eta_2 \cr  
		-1 &\text{if } \eta_1,\eta_2 \in \Lambda_2 \text{ and } \eta_1 \sim \eta_2 \cr  
		0 &\text{else.} 
	\end{cases}
\end{equation*}

\subsubsection{Clusters of polymers}

Consider a multiset 
\begin{align*}
    &\mathcal{S} = \{ \underbrace{\eta_1,\dots, \eta_1}_{n_{\mathcal{S}}(\eta_1) \text{ times}}, \underbrace{\eta_2, \dots, \eta_2}_{n_{\mathcal{S}}(\eta_2) \text{ times}},\dots, \underbrace{\eta_k,\dots,\eta_k}_{n_{\mathcal{S}}(\eta_k) \text{ times}} \} = \{\eta_1^{n(\eta_1)}, \ldots, \eta_k^{n(\eta_k)}\}, 
\end{align*}
where \( \eta_1,\dots,\eta_k \in \Lambda_1 \cup \Lambda_2 \) are distinct and $n(\eta)=n_{\mathcal{S}}(\eta)$ denotes the number of times $\eta$ occurs in~$\mathcal{S}$. Following~\cite[Chapter 3]{fv2017}, we say that $\mathcal{S}$ is \emph{decomposable} if it is possible to partition $\mathcal{S}$ into disjoint multisets. That is, if there exist non-empty and disjoint multisets $\mathcal{S}_1,\mathcal{S}_2 \subset \mathcal{S}$ such that $\mathcal{S} = \mathcal{S}_1 \cup \mathcal{S}_2$ and such that for each pair $(\eta_1,\eta_2) \in \mathcal{S}_1 \times \mathcal{S}_2$,  $\eta_1 \nsim \eta_2.$  
If $\mathcal{S}$ is not decomposable, we say that $\mathcal{S}$  is a \emph{cluster of polymers}. We stress that such a cluster is unordered and may contain several copies of the same polymer.

In this section, we let \(  \Xi \) be the set of all clusters.

When \( \mathcal{S} \in \Xi, \) we let \( \mathcal{S}_1 \) denote the multiset \( \{ \eta^{n(\eta)} \}_{\eta \in \mathcal{S} \colon \eta \in \Lambda_1} \) and analogously let \( \mathcal{S}_2 \) denote the multiset \( \{ \eta^{n(\eta)} \}_{\eta \in \mathcal{S} \colon \eta \in \Lambda_2}. \) Further, we let
\begin{equation*}
	\| \mathcal{S} \| \coloneqq \sum_{\eta \in \mathcal{S}} n(\eta) |(\support \eta)^+| \quad \text{and} \quad n(\mathcal{S}) \coloneqq \sum_{\eta \in \mathcal{S}} n(\mathcal{S}).
\end{equation*}

When \( \gamma \in \Lambda_0 \) and \( v \in C_0(B_N)^+, \) we write \( v \sim \gamma \) if there is \( e \in \gamma \) such that \( v \in (\support \partial e)^+ .\) Similarly, if \( \omega \in \Omega^2(B_N,\mathbb{Z}_2) \) and \( p \in C_2(B_N)^+\) we write \( \omega \sim p \) if there is \( p' \in (\support \omega)^+ \) such that \( \support \hat \partial p \cap \support \hat \partial p' \neq \emptyset. \)

When \( \mathcal{S} \in \Xi \) and \( \gamma_0 \) is a path, we write \( \mathcal{S}_1 \sim \gamma_0 \) if there is \( \gamma' \in \mathcal{S}_1 \) such that \( \gamma \sim \gamma_0. \)

For \( i \geq 0, \) we let
\begin{equation*}
	\Xi_i \coloneqq \bigl\{ \mathcal{S} \in \Xi \colon n(\mathcal{S}) = 1 \bigr\}.
\end{equation*}
 As in Sections~\ref{section: clusters higgs} and~\ref{section: clusters confinement}, the sets \( \Xi  \) and \( \Xi_i \) depend on \( N, \) but we usually suppress this dependency.

\subsubsection{The activity of clusters}

For a closed path \( \gamma \in \Lambda_0 \) and a path \( \gamma_0 \in \Lambda_0, \) we define the activity of clusters \( \mathcal{S} \in \Xi\) by
\begin{align*}
    \varphi^{\gamma,\gamma_0}_{\beta,\kappa} (\mathcal{S}) 
    \coloneqq 
    \prod_{\omega \in \mathcal{S}_2} \rho(\omega(\gamma)) e^{\beta \sum_p (\rho(\omega(p))-1)} 
    \prod_{\gamma' \in \mathcal{S}_1} (\tanh 2\kappa)^{| \gamma'|}\mathbf{1}(\gamma' \nsim \gamma_0).
\end{align*}

\subsubsection{Ursell functions}

The Ursell function which is relevant in the free phase, which we define below, is slightly different than the Ursell function associated to hard-core interaction which was used in the Higgs phase and the confinement phase.

Recall the definition of \( \mathcal{G}^k\) from Section~\ref{sec: ursell 1}. 

\begin{definition}[The Ursell functions]\label{def:ursell 2}
    For \( k \geq 1 \) and  \( \eta_1,\eta_2,\dots , \eta_k \in \Lambda_1 \cup \Lambda_2 ,\) we let
    \begin{align*}
        U(  \eta_1, \ldots, \eta_k  ) 
        \coloneqq 
        \frac{1}{k!} 
        \sum_{G \in \mathcal{G}^k} 
        &(-1)^{|E(\mathcal{G} )|} 
        \prod_{\substack{(i,j) \in E(\mathcal{G}) \colon\\ \eta_i,\eta_j \in \Lambda_1}} \mathbb{1}( \eta_i \sim \eta_j)
        \prod_{\substack{(i,j) \in E(\mathcal{G}) \colon\\ \eta_i,\eta_j \in \Lambda_2}} \mathbb{1}( \eta_i \sim \eta_j)
        \\&\qquad\cdot 
        \prod_{\substack{(i,j) \in E(\mathcal{G}) \colon \\ |\{ \eta_i,\eta_j \} \cap  \Lambda_1|=1  }} 2 \cdot \mathbb{1}( \eta_i \sim \eta_j).
    \end{align*} 
    Note that this definition is invariant under permutations of the polymers \( \eta_1,\eta_2,\dots,\eta_k.\)

   For \( \mathcal{S} \in \Xi_k,\) and any enumeration \( \eta_1,\dots, \eta_k \) (with multiplicities) of the polymers in \( \mathcal{S},\) we define
   \begin{equation}\label{eq: ursell functions} 
        U(\mathcal{S}) = k! \, U(\eta_1,\dots, \eta_1).
   \end{equation}
\end{definition} 
Note that for any \( \mathcal{S} \in \Xi_1,\) we have \( U(\mathcal{S})=1,\) and for any \( \mathcal{S} \in \Xi_2,\)  we have either
\( U(\mathcal{S}) = -1 \) or \( U(\mathcal{S}) = -2. \)

\subsubsection{Cluster expansion of the partition function}

Before we state and prove the main result of this section, we will state and prove a few useful lemmas.

\begin{lemma}\label{lemma: intersection bound for path-path}
	Let \( \beta \geq 0 \) and \( \kappa \geq 0 . \) Further, let \( \gamma \) be a closed path, let \( \gamma_0 \) be a path, and let \( \gamma' \) be a non-empty path. Then, for any \( a \in (0,1), \) we have
	\begin{equation*}
		\sum_{\substack{ \gamma''  \in \Lambda_1 \mathrlap{\colon} \\ \gamma'' \sim \gamma'}} \varphi^{\gamma,\gamma_0}_{\beta,\kappa}(\{\gamma''\})^{a } 
		\leq \bigl| \{ v \in C_0(B_N)^+ \colon v \sim \gamma'  \}  \bigr| \sum_{j=2}^\infty 2j(2m)^{2j} (\tanh 2 \kappa)^{2aj}. 
	\end{equation*} 
\end{lemma}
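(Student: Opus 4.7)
The plan is to reduce the sum to a walk-counting problem in $\mathbb{Z}^m$. First, since $\mathbf{1}(\gamma'' \nsim \gamma_0) \leq 1$, the definition of $\varphi^{\gamma,\gamma_0}_{\beta,\kappa}$ yields the pointwise bound $\varphi^{\gamma,\gamma_0}_{\beta,\kappa}(\{\gamma''\})^a \leq (\tanh 2\kappa)^{a|\gamma''|}$, so it suffices to prove the inequality with $(\tanh 2\kappa)^{a|\gamma''|}$ in place of $\varphi^a$. Next, I would extract the factor $\bigl|\{v \in C_0(B_N)^+ \colon v \sim \gamma'\}\bigr|$ by a union bound over a shared vertex: the relation $\gamma'' \sim \gamma'$ in $\mathcal{G}_0$ produces some $v \in C_0(B_N)^+$ with $v \sim \gamma'$ and $v \sim \gamma''$, whence
\[
\sum_{\substack{\gamma'' \in \Lambda_1 \\ \gamma'' \sim \gamma'}} (\tanh 2\kappa)^{a|\gamma''|}
\;\leq\;
\sum_{\substack{v \in C_0(B_N)^+ \\ v \sim \gamma'}}\;\;
\sum_{\substack{\gamma'' \in \Lambda_1 \\ v \sim \gamma''}} (\tanh 2\kappa)^{a|\gamma''|}.
\]

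For the inner sum at a fixed $v$, the plan is to encode each polymer by a walk in $\mathbb{Z}^m$. The polymers $\gamma''$ that actually contribute to $\check Z_{N,\beta,\kappa}[\gamma,\gamma_0]$ arise as connected components of a $1$-chain with vanishing boundary, so each such $\gamma''$ is itself closed; since $\mathbb{Z}^m$ is bipartite with $m \geq 3$, this forces $|\gamma''|$ to be even and at least $4$, and so I may write $|\gamma''| = 2j$ with $j \geq 2$. Such a $\gamma''$ admits an Eulerian circuit of length $2j$ starting at any of its vertices, which is in particular a walk of length $2j$ in $\mathbb{Z}^m$; from the fixed $v$ there are at most $(2m)^{2j}$ walks of length $2j$. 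The extra factor $2j$ comes from an over-counting in the walk encoding, e.g.\ from marking the position of $v$ within the Eulerian circuit (at most $2j$ such positions) or, equivalently, from choosing one of the at most $2j$ edges of $\gamma''$ incident to $v$. This yields $\#\{\gamma'' \in \Lambda_1 \colon v \sim \gamma'',\ |\gamma''|=2j\} \leq 2j(2m)^{2j}$, and substituting into the display completes the bound.

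The main obstacle is the combinatorial walk-counting step: matching the stated bound requires exploiting the closed (Eulerian) structure of the polymers rather than a generic depth-first traversal, which would yield a walk of length $2|\gamma''|=4j$ and hence a weaker bound of order $(2m)^{4j}(\tanh 2\kappa)^{2aj}$. Once the correct encoding is chosen and the $2j$ over-counting factor is correctly accounted for, the remainder of the argument is a routine substitution and a geometric-type summation over $j \geq 2$.
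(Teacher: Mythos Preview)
Your proposal is correct and follows essentially the same route as the paper: bound $\varphi^{\gamma,\gamma_0}_{\beta,\kappa}(\{\gamma''\})^a$ by $(\tanh 2\kappa)^{a|\gamma''|}$, union-bound over a shared vertex $v$, use closedness of the polymers to get even length $\geq 4$, and encode each polymer by a walk of length $2j$ from $v$. Your explanation of the extra factor $2j$ is slightly backwards (over-counting in the encoding can only \emph{help} the inequality), but since your Eulerian-circuit encoding already yields $(2m)^{2j}\leq 2j(2m)^{2j}$ the stated bound holds regardless.
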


\begin{proof}
	If \( \gamma'' \in \Lambda_1 \) is such that \( \gamma''\sim \gamma', \) then there must exist some \( v \in C_0(B_N)^+ \) such that \( v \sim \gamma' \) and \( v \in \gamma''. \) Hence
	 	\begin{align*}
		&\sum_{\substack{ \gamma''  \in \Lambda_1 \mathrlap{\colon} \\ \gamma'' \sim \gamma'}}  \varphi^{\gamma,\gamma_0}_{\beta,\kappa} (\{ \gamma''\})^{a }
		\leq
		\sum_{\substack{ \gamma''  \in \Lambda_1 \mathrlap{\colon} \\ \gamma'' \sim \gamma'}} \varphi^{0,0}_{\beta,\kappa} (\{ \gamma''\})^{a }
		= 
		\sum_{\substack{ \gamma''  \in \Lambda_1 \mathrlap{\colon} \\ \gamma'' \sim \gamma'}} 
		(\tanh 2\kappa)^{a|\gamma''|} .
	\end{align*} 
	Since any \( \gamma'' \in \Lambda_1 \) is closed, \( |\gamma''| \) is even. Moreover, any non-trivial \( \gamma'' \in \Lambda_1 \) satisfies  \( |\gamma''| \geq 4.\) Combining these observations, we obtain
	\begin{align*}
		& 
		\sum_{\substack{ \gamma''  \in \Lambda_1 \mathrlap{\colon} \\ \gamma'' \sim \gamma'}} 
		(\tanh 2\kappa)^{a|\gamma''|} 
		\leq
		\sum_{v \sim \gamma'} \sum_{j=2}^\infty \bigl| \{ \gamma'' \in \Lambda_1 \colon v \sim \gamma'' \text{ and } |\gamma''|=2j \} \bigr| (\tanh 2\kappa)^{ 2aj}.
	\end{align*}
	
	Since any \( v' \in C_0(B_N)^+ \) has degree at most \( 2m ,\) for any \( j \geq 1 \)  there can be at most \( 2j (2m)^j \) paths \( \gamma'' \in \Lambda_1 \) such that \( v \in \gamma'' \) and \( |\gamma''|=j, \) and hence
	
	\[ 
	 \bigl| \{ \gamma'' \in \Lambda_1 \colon v \sim \gamma'' \text{ and } |\gamma''|=2j \} \bigr| \leq 2j (2m)^{2j}.
	 \]
	 Combining the above inequalities, the desired conclusion now follows.
\end{proof}

\begin{lemma}\label{lemma: intersection bound for spin-spin}
	Let \( \beta \geq 0\) and \( \kappa \geq 0. \) Further, let \( \gamma \) be a closed path, let \( \gamma_0 \) be a path,  let \( \omega \in \Lambda_2 , \)  and let \( a \in (0,1) .\)   Then 
	\begin{equation*}
		\sum_{\substack{ \omega'  \in \Lambda_2 \mathrlap{\colon} \\\omega' \sim \omega}} \bigl|\varphi^{\gamma,\gamma_0}_{\beta,\kappa}(\{\omega'\}) \bigr|^{a}  \leq 
		\bigl|(\support \omega)^+\bigr|\sum_{j=2(m-1)}^\infty 
		M_3^{2j+1} e^{-4\beta a j} .
	\end{equation*}
\end{lemma}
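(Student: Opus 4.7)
The plan is to extract a clean per-polymer bound from $|\varphi^{\gamma,\gamma_0}_{\beta,\kappa}(\{\omega'\})|^a$ using unitarity, and then reduce the sum to a standard enumeration of connected subgraphs of $\mathcal{G}_3$ via a spanning tree argument, in complete analogy with Lemma~\ref{lemma: Xi1ke bound} and Lemma~\ref{lemma: no cl upper bound ht}.

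I would begin with the activity bound. For a singleton cluster $\{\omega'\}$ with $\omega' \in \Lambda_2$, the definition of $\varphi^{\gamma,\gamma_0}_{\beta,\kappa}$ gives $\varphi^{\gamma,\gamma_0}_{\beta,\kappa}(\{\omega'\}) = \rho(\omega'(q_\gamma))\, e^{\beta \sum_{p \in C_2(B_N)} (\rho(\omega'(p)) - 1)}$. Since $\rho$ is unitary, $|\rho(\omega'(q_\gamma))| = 1$. Since $\omega'$ takes values in $\mathbb{Z}_2$ with $\rho(0) - 1 = 0$ and $\rho(1) - 1 = -2$, and since $\omega'(p) = \omega'(-p)$, the exponent equals $-4\beta\, |(\support \omega')^+|$. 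Raising to the $a$-th power yields $|\varphi^{\gamma,\gamma_0}_{\beta,\kappa}(\{\omega'\})|^a = e^{-4a\beta\, |(\support \omega')^+|}$, so the problem reduces to counting connected polymers of each prescribed size.

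Next I would enumerate the $\omega' \in \Lambda_2$ with $\omega' \sim \omega$ in three layers. By definition of the relation $\sim$ on $\Lambda_2$, the induced subgraph of $\mathcal{G}_3$ on $(\support \omega)^+ \cup (\support \omega')^+$ is connected, so there exist $p \in (\support \omega)^+$ and $p' \in (\support \omega')^+$ with $p = p'$ or $p \sim p'$ in $\mathcal{G}_3$. There are at most $|(\support \omega)^+|$ choices for $p$, and for each $p$ at most $M_3 + 1$ choices for $p'$ (since each vertex of $\mathcal{G}_3$ has degree at most $M_3$). Finally, for each $p'$ and each $j \geq 1$, the number of connected polymers $\omega' \in \Lambda_2$ with $|(\support \omega')^+| = j$ and $p' \in (\support \omega')^+$ is at most $M_3^{2j-2}$, by the exact same spanning tree argument as in Lemma~\ref{lemma: no cl upper bound ht}: $\mathcal{G}_3(\omega')$ is connected, hence admits a spanning walk of length $2j - 1$ starting at $p'$, and there are at most $M_3^{2j-2}$ such walks. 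Combining these three factors with the per-polymer activity bound yields
\[
\sum_{\substack{\omega' \in \Lambda_2 \colon \\ \omega' \sim \omega}} \bigl|\varphi^{\gamma,\gamma_0}_{\beta,\kappa}(\{\omega'\})\bigr|^a \leq |(\support \omega)^+| \sum_{j \geq 2(m-1)} (M_3 + 1)\, M_3^{2j-2}\, e^{-4a\beta j},
\]
which is bounded by the stated right-hand side since $(M_3 + 1)\, M_3^{2j-2} \leq M_3^{2j+1}$ when $M_3 \geq 1$.

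The principal subtlety is justifying the lower bound $j \geq 2(m-1)$ on the range of summation. This arises from the implicit constraint $d\omega' = 0$ inherited from the restriction $d\omega = 0$ in the definition of $\check Z_{N,\beta,\kappa}[\gamma, \gamma_0]$: since no edge is shared between different $\mathcal{G}_3$-components, the constraint $d = 0$ is preserved componentwise, so the $\omega'$ appearing in the cluster expansion must themselves be closed. Verifying that any non-trivial $\omega' \in \Lambda_2$ with $d\omega' = 0$ contains at least $2(m-1)$ positively oriented plaquettes is a purely geometric statement about closed discrete $2$-surfaces in $\mathbb{Z}^m$, and is the main obstacle; the remainder of the argument is routine counting.
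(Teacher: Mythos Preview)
Your proof is correct and follows essentially the same approach as the paper: reduce the activity to $e^{-4a\beta|(\support\omega')^+|}$ via unitarity, then count connected polymers by a spanning-walk argument in $\mathcal{G}_3$. The only organizational difference is that you split the enumeration into three layers (choose $p\in(\support\omega)^+$, then a neighbour $p'$, then a polymer through $p'$), obtaining $(M_3+1)M_3^{2j-2}$, whereas the paper counts $\{\omega':\omega'\sim p,\ |(\support\omega')^+|=j\}$ in one step by running a spanning walk on $\{p\}\cup(\support\omega')^+$, obtaining $M_3^{2j+1}$ directly; both are equivalent and your intermediate bound is in fact slightly sharper. Your treatment of the constraint $j\geq 2(m-1)$ is more explicit than the paper's (which simply cites \cite{c2019}); one small wording issue: the reason $d\omega'=0$ is preserved componentwise is that all plaquettes in the boundary of a given $3$-cell are mutually $\mathcal{G}_3$-adjacent (they share that $3$-cell in their coboundaries), not that ``no edge is shared''.
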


\begin{proof}
	Since any non-trivial \( \omega' \in \Lambda_2 \) satisfies \( |(\support \omega')^+| \geq 2(m-1)\) (see, e.g.,~\cite{c2019}), we can write
	\begin{align*}
		&\sum_{\substack{ \omega'  \in \Lambda_2 \mathrlap{\colon} \\\omega' \sim \omega}} \bigl|\varphi^{\gamma,\gamma_0}_{\beta,\kappa} (\{\omega'\}) \bigr|^{a} 
		=
		\sum_{\substack{ \omega'  \in \Lambda_2 \mathrlap{\colon} \\\omega' \sim \omega}} e^{-2\beta a|\support \omega'|}  
		\\&\qquad\leq
		\sum_{p \in (\support \omega)^+} \sum_{j=2(m-1)}^\infty 
		\bigl| \{ \omega' \in \Lambda_2 \colon \omega' \sim p \text{ and } |(\support \omega')^+|=j\} \bigr| e^{-4\beta aj}   .
	\end{align*} 
	If \( \omega' \in \Lambda_2 \) and \( p \in (\support \omega)^+ \) are such that \( \omega' \sim p, \) then  \( \{ p \} \cup (\support \omega')^+ \) is a induces a connected subgraph \( G \) of \( \mathcal{G}_3. \) Since \( G \) is a connected graph, it has a spanning path of length at most \( 2|(\support \omega')^+|+1 \) that starts at \( p. \) The number of paths in \( \mathcal{G}_3 \) of length \( 2|(\support \omega')^+|+1 \) that starts at \( p \) is at most \( M_3^{ 2|(\support \omega')^+|+1 }. \) Hence
	\begin{equation*}
		\bigl| \{ \omega' \in \Lambda_2 \colon \omega' \sim p \text{ and } |(\support \omega')^+|=j\} \bigr|
		\leq M_3^{2j+1}.
	\end{equation*}
	Combining the above inequalities, we obtain the desired conclusion.
\end{proof}

\begin{lemma}\label{lemma: intersection bound for path-spin}
	Let \( \beta \geq 0 \) and \( \kappa \geq 0. \) Further, let \( \gamma \) be a closed path, let \( \gamma_0 \) be a path,  let \( \gamma' \in \Lambda_1 ,\) and let \( a \in (0,1). \)  Then
	\begin{equation*} 
        \sum_{\substack{\omega'  \in \Lambda_2 \mathrlap{\colon}\\ \omega' \sim \gamma'}}
         \bigl| \varphi^{\gamma,\gamma_0}_{\beta,\kappa}(\{\omega'\})\bigr|^{a}  \leq | \gamma'|\sum_{j=1}^\infty D_0 j^3  \sum_{k=\max(4j, 2(d-1))}^\infty 
         M_3^{2k-1} e^{-4\beta a k}.
	\end{equation*}
\end{lemma}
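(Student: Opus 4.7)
The plan is to bound the left-hand side by an over-counting argument. Since $|\rho(\omega'(q_\gamma))| = 1$ and each plaquette in $(\support \omega')^+$ contributes $-4$ to the exponent in the activity formula, we have $\bigl|\varphi^{\gamma,\gamma_0}_{\beta,\kappa}(\{\omega'\})\bigr|^a = e^{-4\beta a |(\support \omega')^+|}$. Writing $k$ for $|(\support \omega')^+|$, the problem reduces to controlling the total weight $e^{-4\beta a k}$ summed over connected closed polymers $\omega' \in \Lambda_2$ satisfying $\omega' \sim \gamma'$.

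The natural over-count iterates over pairs $(e,p)$ with $e \in \support \gamma'$ and $p \in (\support \omega')^+$ at $\ell_0$-distance $j$ from $e$: there are $|\gamma'|$ choices of $e$, at most $D_0 j^{m-1}$ plaquettes $p$ at distance exactly $j$ from a fixed $e$ by the definition of $D_0$, and at most $M_3^{2k-1}$ connected polymers $\omega' \in \Lambda_2$ of fixed size $k$ containing a given plaquette by a spanning-path argument in $\mathcal{G}_3$ analogous to Lemma~\ref{lemma: no cl upper bound ht}.

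The crucial geometric input is then the following claim: whenever $\omega' \in \Lambda_2$ satisfies $\omega' \sim \gamma'$ and contains a plaquette at distance $j$ from $\gamma'$, the size $k$ satisfies $k \geq \max(4j, 2(m-1))$. The bound $k \geq 2(m-1)$ is the minimum size of any nontrivial connected closed 2-polymer in $\mathbb{Z}^m$: since (by consistency of the cluster decomposition) $d\omega' = 0$, the Poincar\'e lemma gives $\omega' = d\sigma$ for some 1-form $\sigma$, and the smallest nontrivial such $\omega'$ is the ``star'' of a single edge, consisting of the $2(m-1)$ plaquettes incident to it, which one verifies is $\mathcal{G}_3$-connected (the resulting graph is $(m-1)$-partite with two plaquettes per part, complete between parts). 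The bound $k \geq 4j$ uses the linking condition: by discrete Stokes, $\rho(\omega'(q_{\gamma'})) = -1$ is equivalent to $\sigma(\gamma') \equiv 1 \pmod{2}$, so $\support \sigma$ must contain at least one edge of $\gamma'$; combined with $\omega'$ reaching distance $j$ from $\gamma'$, a minimal such $\sigma$ contains a path of length at least $j$ leaving $\gamma'$, and each edge of this path contributes at least $4$ plaquettes to $\support d\sigma = \support \omega'$ after accounting for the cancellations with the stars of neighbouring edges.

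Combining the three estimates yields precisely the claimed inequality. The main obstacle is the geometric step $k \geq 4j$: one must produce a 1-form $\sigma$ with $d\sigma = \omega'$ whose support is a minimal path from $\gamma'$ to a far plaquette, and then carefully control the plaquette contributions of $d\sigma$ along this path net of cancellations between the stars of neighbouring edges. This requires a delicate structural analysis of minimal representatives in the cohomology class $[\sigma]$ and of $\mathcal{G}_3$-connectedness for thin ``tube''-like configurations.
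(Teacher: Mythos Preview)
Your overall scheme matches the paper's: reduce $|\varphi^{\gamma,\gamma_0}_{\beta,\kappa}(\{\omega'\})|^a$ to $e^{-4\beta a k}$ with $k=|(\support\omega')^+|$, over-count by an anchor $(e,p)$ with $e\in\gamma'$ and $p\in\support\omega'$ at distance $j$, count anchors by $|\gamma'|\cdot D_0 j^{m-1}$, and count connected $\omega'$ of size $k$ through a fixed $p$ by $M_3^{2k-1}$ via a spanning path in $\mathcal{G}_3$. Your argument for $k\geq 2(m-1)$ is correct.

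The gap is in $k\geq 4j$. First, the anchoring must take $p$ to be a \emph{closest} plaquette, so that $j=\dist(\omega',\gamma')$; your formulation (``contains a plaquette at distance $j$'') is false for non-minimal $j$ (attach a far protrusion to an $\omega'$ adjacent to $\gamma'$). The paper enforces this via the extra constraint $\dist(\omega',\gamma')\geq j$ in the inner sum. Second, and more seriously, your route through a potential $\sigma$ with $d\sigma=\omega'$ cannot work. If $\dist(\omega',\gamma')\geq j$ then $d\sigma$ vanishes on the entire $j$-tube around $\gamma'$, so $\sigma$ is locally a gradient there; in particular your picture of $\sigma$ as a thin path leaving $\gamma'$ is incompatible with the hypothesis, since the star of the very first edge of such a path would already place plaquettes of $d\sigma$ at distance $0$ from $\gamma'$. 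No ``minimal representative'' of $\sigma$ avoids this, and in any case $|\support d\sigma|=|\support\omega'|$ is gauge-invariant, so optimising over $\sigma$ is not the right lever. The paper's argument is different and does not pass through $\sigma$: from $\omega'(q_{\gamma'})=1$ for \emph{every} surface $q_{\gamma'}$ with boundary $\gamma'$, the support of $\omega'$ intersects all such surfaces, and together with $d\omega'=0$ this forces $\support\omega'$ to form a closed configuration winding around $\gamma'$; such a configuration kept at distance $\geq j$ from $\gamma'$ must contain at least $4j$ plaquettes.
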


\begin{proof}
	Note first that 
	\begin{align*}
		&\sum_{\substack{\omega'  \in \Lambda_2 \mathrlap{\colon}\\\omega' \sim \gamma' }}
         \bigl| \varphi^{\gamma,\gamma_0}_{\beta,\kappa}(\{\omega'\})\bigr|^{a}  
         \leq 
         \sum_{e \in \gamma'} \sum_{j=1}^\infty 
         \sum_{\substack{p \in C_2(B_N)^+ \mathrlap{\colon}\\ \dist(p,e)=j}}  
         \sum_{\substack{\omega' \in \Lambda_2 \colon p \in \support \omega' ,\\   \dist(\omega',\gamma') \geq j }} \bigl| \varphi^{\gamma,\gamma_0}_{\beta,\kappa}(\{\omega'\})\bigr|^{a}  .
        \end{align*}
        
         If \( \omega' \in \Lambda_2 \) is such that \( \omega' \sim \gamma', \) then  every oriented surface \( q \) with \( \partial q = \gamma' \) \( \omega \) must intersect \( q,\) and hence the support of \( \omega' \) must loop around \( \gamma' .\)  Consequently, if \( \omega' \in \Lambda_2 \) is such that  \( \dist(\omega',\gamma') = j \) and \( \omega' \sim \gamma', \) then we must have \( |(\support \omega')^+| \geq 4j. \) Moreover, for any \( \omega' \in \Lambda_2, \) we have \( |(\support \omega')| \geq 2(m-1)  \) (see, e.g.,~\cite{c2019}).
         Consequently, for any \( j\geq 1 \) and \( p \in C_2(B_N)^+ \) such that \( \dist(p,e)=j, \) we have  
         \begin{align*}
         	&\sum_{\substack{\omega' \in \Lambda_2 \colon p \in \support \omega' ,\\   \dist(\omega',\gamma') \geq j }} \bigl| \varphi^{\gamma,\gamma_0}_{\beta,\kappa}(\{\omega'\})\bigr|^{a}  
         	\\&\qquad\leq
         	\sum_{k=\max(4j, 2(d-1))}^\infty 
         \bigl| \{ \omega' \in \Lambda_2 \colon p \in \support \omega' \text{ and } |(\support \omega')^+| = k\}\bigr| e^{-2\beta a \cdot  2k} .
         \end{align*}
         Now note that any \( \omega' \in \Lambda_2 \) corresponds to a connected component in \( \mathcal{G}_3 \) which has a spanning path of length at most \( 2|(\support \omega')^+|-1 \) that starts at some given \( p \in \support \omega'. \) This implies in particular that for any \( p \in C_2(B_N)^+ \) and any \( k \geq 1, \) we have
    	\begin{align*}
    		\bigl| \{ \omega' \in \Lambda_2 \colon p \in \support \omega' \text{ and } |(\support \omega')^+| = k\}\bigr| \leq M_3^{2k-1}.
    	\end{align*}
    	Finally, we note that
    	for any edge \( e \in C_1(B_N)^+ \) and \( j \geq 1, \) we have
    	\[
    	\bigl| \{ p \in C_2(B_N)^+ \colon \dist(e,p)=j \} \bigr| \leq D_0 j^3.
    	\]
    	 Combining the above equations,  the desired conclusion follows.
\end{proof}

\begin{lemma}\label{lemma: intersection bound for spin-path}
	Let \( \beta \geq 0  \) and let \( \kappa \geq 0. \) Further, let \( \gamma  \) be a closed path, let \( \gamma_0 \) be a path, and let \( \omega \in \Lambda_2 .\) Further, let \( a \in (0,1). \)  Then
	\begin{equation*}
		\sum_{\substack{ \gamma'  \in \Lambda_1 \mathrlap{\colon}\\ \gamma' \sim \omega}}  \bigl|\varphi^{\gamma,\gamma_0}_{\beta,\kappa}(\{\gamma'\})  \bigr|^{a} \leq |(\support \omega)^+| \sum_{j=1}^\infty D_0j^3  \sum_{k=4j}^\infty (2m)^k (\tanh 2\kappa)^{ak}.
	\end{equation*}
\end{lemma}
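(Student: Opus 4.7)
The plan is to mirror the argument of Lemma~\ref{lemma: intersection bound for path-spin} with the roles of the path and the 2-chain swapped. First I would use the trivial bound
\[
|\varphi^{\gamma,\gamma_0}_{\beta,\kappa}(\{\gamma'\})|^{a} \leq (\tanh 2\kappa)^{a|\gamma'|},
\]
which holds since the indicator factor lies in $\{0,1\}$ and the only other factor in the activity of a single path polymer is $(\tanh 2\kappa)^{|\gamma'|}$. This reduces the problem to estimating $\sum_{\gamma' \sim \omega}(\tanh 2\kappa)^{a|\gamma'|}$.

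Next I would decompose the sum by, for each $\gamma' \in \Lambda_1$ with $\gamma' \sim \omega$, fixing a plaquette $p \in (\support \omega)^+$ and an edge $e \in \support \gamma'$ that realize the distance $\dist(\gamma',\omega) = \dist(e,p) \eqqcolon j$, and then overcounting by summing freely over all admissible quadruples $(p,j,e,\gamma')$. This gives
\[
\sum_{\substack{\gamma' \in \Lambda_1 \\ \gamma' \sim \omega}} (\tanh 2\kappa)^{a|\gamma'|} \leq \sum_{p \in (\support \omega)^+} \sum_{j = 1}^{\infty} \sum_{\substack{e \in C_1(B_N)^+ \\ \dist(p,e) = j}} \sum_{\substack{\gamma' \in \Lambda_1 \\ e \in \gamma',\, \gamma' \sim \omega}} (\tanh 2\kappa)^{a|\gamma'|}.
\]

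The main step is a linking inequality: if $\gamma' \in \Lambda_1$ is closed, $\gamma' \sim \omega$ (so $\rho(\omega(q_{\gamma'})) = -1$ for any spanning surface $q_{\gamma'}$ of $\gamma'$), and $\dist(\gamma',\omega) = j$, then $|\gamma'| \geq 4j$. Indeed, the relation $\gamma' \sim \omega$ forces every surface $q_{\gamma'}$ to pass through $\support \omega$ at an odd number of plaquettes; since the nearest such plaquette is at distance $j$ from $\gamma'$, the loop $\gamma'$ must encircle a plaquette at distance $\geq j$ from itself, which forces $|\gamma'| \geq 4j$. This is precisely the analog of the corresponding step in Lemma~\ref{lemma: intersection bound for path-spin}.

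With the linking inequality in hand, two routine counting bounds finish the argument: the number of edges in $C_1(B_N)^+$ at distance $j$ from a fixed plaquette $p$ is at most $D_0 j^3$ (by an argument analogous to the defining property of $D_0$ with the roles of edge and plaquette swapped), and the number of connected paths $\gamma' \in \Lambda_1$ of size $k$ that contain a given edge $e$ is at most $(2m)^k$ (by bounding them by walks of length $k$ starting from an endpoint of $e$, using that the lattice vertex degree is $2m$). Substituting these into the decomposition above gives exactly the claimed bound. The hard part is the topological linking inequality $|\gamma'| \geq 4j$; everything else is routine enumeration following the template already established in this section.
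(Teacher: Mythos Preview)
Your proposal is correct and is exactly what the paper intends: the paper omits the proof entirely, stating only that it is ``completely analogous to the proof of Lemma~\ref{lemma: intersection bound for path-spin}'', and your write-up carries out precisely that analogy (activity bound, decomposition over $p\in(\support\omega)^+$, $j$, and $e$ at distance $j$, the linking inequality $|\gamma'|\ge 4j$, and the two counting bounds). The only cosmetic difference is that you bound the number of closed connected paths of size $k$ through a fixed edge by $(2m)^k$ via an Eulerian-walk argument, whereas the paper's parallel count in Lemma~\ref{lemma: intersection bound for path-path} anchors at a vertex; both lead to the stated $(2m)^k$ factor.
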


As the proof of Lemma~\ref{lemma: intersection bound for spin-path} is completely analogous to the proof of Lemma~\ref{lemma: intersection bound for path-spin}, we omit it here.

For \( \beta,\kappa \geq 0, \) a closed loop \( \gamma ,\) a path \( \gamma_0 ,\) and \( \mathcal{S} \in \Xi, \) we let 
\[
	\Psi^{\gamma,\gamma_0}_{\beta,\kappa}(\mathcal{S}) \coloneqq U(\mathcal{S}) \varphi^{\gamma,\gamma_0}_{\beta,\kappa}(\mathcal{S}).
\]
Note that with this notation, we have
\begin{equation*}
	\Psi^{\gamma,\gamma_0}_{\beta,\kappa} (\mathcal{S}) = \Psi^{0,0}_{\beta,\kappa} (\mathcal{S}) \prod_{\gamma' \in \mathcal{S}} \mathbf{1}(\gamma' \nsim \gamma_0) \prod_{\omega \in \mathcal{S}}\rho(\omega(\gamma)).
\end{equation*}

\begin{proposition}\label{proposition: cluster convergence 3}
	For \( \alpha \in (0,1), \) there are \( \beta_0^{(\textrm{free})}(\alpha)>0 \) and  \( \kappa_0^{(\textrm{free})} (\alpha)>0\)  such that the following holds.
	
	\begin{enumerate}
		\item For all \( \alpha\in (0,1) ,\) \( \beta>   \beta_0^{(\textrm{free})}(\alpha) , \)  \( \kappa < \kappa_0^{(\textrm{free})} (\alpha), \)  \( \gamma \in \Lambda_1\),  \( \gamma_0 \in \Lambda_0, \) and \( \eta \in \Xi,\) we have 
    \begin{equation*}
        \sum_{\mathcal{S} \in \Xi \colon \eta \in \mathcal{S}} \bigl|\Psi^{\gamma,\gamma_0}_{\beta,\kappa} (\mathcal{S})\bigr| \leq \bigl| \varphi^{\gamma,\gamma_0}_{\beta,\kappa}(\eta)\bigr|^{1-\alpha }  .
    \end{equation*}  
    \item Let \( \beta>  \beta_0^{(\textrm{free})}(\alpha)  \)  and \( \kappa <  \kappa_0^{(\textrm{free})}(\alpha)  \) for some \( \alpha \in (0,1). \)  Then, for any    \( \gamma \in \Lambda_,\) and  \( \gamma_0 \in \Lambda_0\) we have
	\begin{equation}\label{eq: cluster expansion 3}
		\log \check Z[\gamma,\gamma_0]
		=
		\sum_{\mathcal{S}\in \Xi} \Psi^{\gamma,\gamma_0}_{\beta,\kappa} (\mathcal{S}).
	\end{equation} 
    Furthermore, series on the right-hand side of~\eqref{eq: cluster expansion 3} is absolutely convergent.
	\end{enumerate}  
\end{proposition}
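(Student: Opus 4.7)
The plan is to reduce Proposition~\ref{proposition: cluster convergence 3} to an application of the Kotecký--Preiss criterion \cite[Theorem~5.4]{fv2017}, exactly as was done for the Higgs phase in Lemma~\ref{lemma: the assumption on a} and for the confinement phase in Lemma~\ref{lemma: upper bound and convergence}. The only new feature is that the polymer collection $\Lambda_1 \cup \Lambda_2$ contains two qualitatively different species, so the Kotecký--Preiss inequality will need to be verified by a four-case analysis. Fix $\alpha \in (0,1)$, set
\[
a(\eta) \coloneqq -\alpha \log \bigl|\varphi^{\gamma,\gamma_0}_{\beta,\kappa}(\eta)\bigr|
\]
on polymers of non-zero activity, and discard any $\gamma' \in \Lambda_1$ with $\gamma' \sim \gamma_0$ (which trivially has zero activity). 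With this choice, \cite[Theorem~5.4]{fv2017} yields both the bound $\sum_{\mathcal{S} \ni \eta} |\Psi^{\gamma,\gamma_0}_{\beta,\kappa}(\mathcal{S})| \leq |\varphi^{\gamma,\gamma_0}_{\beta,\kappa}(\eta)|\, e^{a(\eta)} = |\varphi^{\gamma,\gamma_0}_{\beta,\kappa}(\eta)|^{1-\alpha}$ and the absolutely convergent identity~\eqref{eq: cluster expansion 3}, provided the Kotecký--Preiss inequality
\[
\sum_{\eta' \in \Lambda_1 \cup \Lambda_2 \colon \eta' \sim \eta} \bigl|\varphi^{\gamma,\gamma_0}_{\beta,\kappa}(\eta')\bigr|^{1-\alpha} \;\leq\; a(\eta)
\]
is established uniformly in $\eta$.

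To verify this inequality I would split the sum according to whether $\eta' \in \Lambda_1$ or $\eta' \in \Lambda_2$, producing four separate sums depending also on the type of $\eta$. Since $|\varphi^{\gamma,\gamma_0}_{\beta,\kappa}(\{\gamma'\})| = (\tanh 2\kappa)^{|\gamma'|}$ for admissible $\gamma' \in \Lambda_1$ and $|\varphi^{\gamma,\gamma_0}_{\beta,\kappa}(\{\omega\})| = e^{-4\beta |(\support \omega)^+|}$ for $\omega \in \Lambda_2$, each of these four sums is precisely the quantity estimated in one of Lemmas~\ref{lemma: intersection bound for path-path}, \ref{lemma: intersection bound for path-spin}, \ref{lemma: intersection bound for spin-path}, and \ref{lemma: intersection bound for spin-spin}, applied with exponent $a = 1-\alpha$. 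Adding the two sums relevant for a fixed $\eta = \gamma' \in \Lambda_1$ then produces a quantity of the form $C_1(\beta,\kappa)\, |\gamma'|$, while adding the two relevant for $\eta = \omega \in \Lambda_2$ produces $C_2(\beta,\kappa)\, |(\support \omega)^+|$, where $C_1$ and $C_2$ are explicit convergent geometric series in $(\tanh 2\kappa)^{1-\alpha}$ and $e^{-4\beta(1-\alpha)}$ with polynomially growing coefficients.

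Since $a(\{\gamma'\}) = -\alpha |\gamma'| \log \tanh 2\kappa$ diverges as $\kappa \to 0$ and $a(\{\omega\}) = 4\alpha\beta\, |(\support \omega)^+|$ diverges as $\beta \to \infty$, the required inequalities $C_1(\beta,\kappa) \leq -\alpha \log \tanh 2\kappa$ and $C_2(\beta,\kappa) \leq 4\alpha\beta$ can be enforced simultaneously by taking $\kappa$ sufficiently small and $\beta$ sufficiently large; the resulting thresholds define $\kappa_0^{(\textrm{free})}(\alpha)$ and $\beta_0^{(\textrm{free})}(\alpha)$. The main obstacle is controlling the two \emph{mixed-type} sums at once: Lemma~\ref{lemma: intersection bound for path-spin} pays a combinatorial price $M_3^{2k-1}$ when summing $\Lambda_2$-polymers linked to a path polymer and therefore forces $\beta$ to be large, whereas Lemma~\ref{lemma: intersection bound for spin-path} pays $(2m)^k$ when summing $\Lambda_1$-polymers linked to a $2$-form polymer and therefore forces $\kappa$ to be small; only when both conditions hold together does the Kotecký--Preiss condition close, which is exactly the free-phase regime.
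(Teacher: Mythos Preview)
Your approach is essentially the same as the paper's: both reduce the proposition to the Koteck\'y--Preiss criterion with $a(\eta)=-\alpha\log|\varphi^{\gamma,\gamma_0}_{\beta,\kappa}(\eta)|$ and then verify the criterion via the four-case analysis handled by Lemmas~\ref{lemma: intersection bound for path-path}--\ref{lemma: intersection bound for spin-path}. One small point: the interaction here is \emph{not} hard-core in the mixed case ($\zeta(\gamma',\omega)=-2$ when $\omega\sim\gamma'$), so the correct Koteck\'y--Preiss inequality from \cite[Theorem~5.4]{fv2017} carries a factor $|\zeta(\eta,\eta')|$, and your mixed-type sums should be doubled, as the paper does in its cases (iii) and (iv); this is absorbed harmlessly into your constants $C_1,C_2$ and does not affect the argument.
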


\begin{proof} 
	We will show that for all \( \alpha \in (0,1), \) if \( \beta \) is sufficiently large and \( \kappa \) is sufficiently small, then, if we for \( \eta \in \Lambda_1 \cup \Lambda_2 \) let   \( a(\eta) \coloneqq -\alpha \log \bigl| \varphi^{\gamma,\gamma_0}_{\beta,\kappa}(\eta) \bigr|,\)  
	we have 
	\begin{equation}\label{eq: goal ineq free}
		\sum_{\substack{ \eta'  \in \Lambda_1 \cup \Lambda_2 \mathrlap{\colon} \\ \eta' \sim \eta}} \bigl| \varphi^{\gamma,\gamma_0}_{\beta,\kappa}( \{\eta' \})  \zeta(\eta,\eta')\bigr| e^{a(\eta')} \leq \alpha |(\support \eta')^+|.
	\end{equation} 
	Given this, the conclusion of the proposition follows from~\cite[Theorem~5.4]{fv2017}. 
	To this end, let  \( \gamma' \in \Lambda_1\) and \( \omega \in \Lambda_2. \) Note that there are four different cases in~\eqref{eq: goal ineq free}, corresponding to (i) \( \eta,\eta' \in \Lambda_1, \) (ii) \( \eta_1 \in \Lambda_1 \) and \( \eta_2 \in \Lambda_2, \) (iii) \( \eta_1 \in \Lambda_2 \) and \( \eta_2 \in \Lambda_1, \) and (iv) \( \eta_1,\eta_2 \in \Lambda_2 \) respectively. We now treat these cases separately.

	(i) Let \( \gamma' \in \Lambda_1. \) Since \( \gamma' \) is closed, we have \( \bigl| \{ v \colon v \sim \gamma'  \}  \bigr| \leq |\gamma'|.\) Using, by Lemma~\ref{lemma: intersection bound for path-path}, is follows that for any \( \alpha \in (0,1) \) we have
    \begin{align*} 
        &\sum_{\substack{ \gamma''  \in \Lambda_1 \mathrlap{\colon} \\ \gamma'' \sim \gamma'}} \bigl| \varphi^{\gamma,\gamma_0}_{\beta,\kappa}(\{ \gamma''\})  \zeta(\gamma',\gamma'')\bigr| e^{a(\gamma'')} 
        =\sum_{\substack{ \gamma''  \in \Lambda_1 \mathrlap{\colon} \\ \gamma'' \sim \gamma'}} \varphi^{\gamma,\gamma_0}_{\beta,\kappa}(\{ \gamma''\})^{1-\alpha}  
        \\&\qquad\leq
        |\gamma'| \sum_{j=2}^\infty 2j(2m)^{2j} (\tanh 2 \kappa)^{2(1-\alpha)j}. 
    \end{align*} 
    
    (ii) Let \( \omega \in \Lambda_2. \) Then, by Lemma~\ref{lemma: intersection bound for spin-spin}, for any \( \alpha \in (0,1) \) we have
    \begin{align*} 
        &\sum_{ \omega'  \in \Lambda_2 } \bigl| \varphi^{\gamma,\gamma_0}_{\beta,\kappa}( \{\omega'\}) \zeta(\omega,\omega') \bigr|e^{a(\omega')}  
        =
        \sum_{\substack{ \omega'  \in \Lambda_2 \mathrlap{\colon} \\\omega' \sim \omega}} \bigl|\varphi^{\gamma,\gamma_0}_{\beta,\kappa}(\{\omega'\}) \bigr|^{1-\alpha} 
        \\&\qquad\leq 
         \bigl|(\support \omega)^+\bigr|\sum_{j=2(m-1)}^\infty 
		M_3^{2j+1} e^{-4\beta (1-\alpha) j} .
    \end{align*}

    (iii) Let \( \gamma'\in \Lambda_1. \) Then, by Lemma~\ref{lemma: intersection bound for path-spin}, for any \( \alpha \in (0,1) \) we have
    \begin{align*} 
        &\sum_{ \omega'  \in \Lambda_2 } \bigl| \varphi^{\gamma,\gamma_0}_{\beta,\kappa}(\{\omega'\}) \zeta(\gamma,\omega') \bigr| e^{a(\omega')}  
        =
        2
        \sum_{\substack{\omega'  \in \Lambda_2 \mathrlap{\colon}\\\gamma' \sim \omega' }}
         \bigl| \varphi^{\gamma,\gamma_0}_{\beta,\kappa}(\{\omega'\})\bigr|^{1-\alpha}   
        \\&\qquad\leq  
        2 | \gamma'|\sum_{j=1}^\infty D_0 j^3  \sum_{k=\max(4j, 2(d-1))}^\infty 
         M_3^{2k-1} e^{-4\beta a k}
\end{align*}

    (iv) Let \( \omega \in \Lambda_2. \) Then, by Lemma~\ref{lemma: intersection bound for spin-path}, for any \( \alpha \in (0,1) \) we have
    \begin{align*} 
        &\sum_{ \gamma'  \in \Lambda_1 } \bigl|\varphi^{\gamma,\gamma_0}_{\beta,\kappa}(\{\gamma'\}) \zeta(\omega,\gamma') \bigr|e^{a(\gamma')}  
        =
        2\sum_{ \substack{\gamma'  \in \Lambda_1 \mathrlap{\colon} \\\gamma' \sim \omega}}  \varphi^{0,0}_{\beta,\kappa}(\{\gamma'\})^{1-\alpha}  
        \\&\qquad\leq 
         2|(\support \omega)^+| \sum_{j=1}^\infty D_0j^3  \sum_{k=4j}^\infty (2m)^k (\tanh 2\kappa)^{ak}.
    \end{align*}
    
    Note that for any \( \alpha \in (0,1), \)  if \( \beta \) is sufficiently large and \( \kappa \) is sufficiently small, then the upper bounds for all cases are finite. In particular, for any \( \alpha \in (0,1), \)  if \( \beta \) is sufficiently large and \( \kappa \) is sufficiently small, then for any \( \eta \in \Lambda_1 \cup \Lambda_2 \) we have the upper bound
    \begin{align*} 
        &\sum_{ \eta'  \in \Lambda_1 \cup \Lambda_2} \bigl|\varphi^{\gamma,\gamma_0}_{\beta,\kappa} (\eta') \zeta(\eta,\eta') \bigr|e^{a(\eta')}  
        \leq \alpha |(\support \eta)^+|.
    \end{align*} 
    This concludes the proof.
\end{proof}

\subsection{Upper bounds for clusters}\label{section: upper bounds free}

Since the limiting measure \( \langle \cdot \rangle_{\beta,\kappa} \) is translation invariant, without loss of generality we can and will assume that for each \( n \geq 1 \) and all \( N \) sufficiently large, we have \( \check Z_{N,\beta,\kappa} [\gamma_1]  = \check Z_{N,\beta,\kappa} [\gamma_2] .\) 
Also, without loss of generality, we can and will assume that for each \( n \geq 1, \) \( \gamma_1^{(n)} \) and \( \gamma_2^{(n)} \) lie in the \( x_1x_2 \)-plane and has its endpoints at the \( x_1\)-axis. When \( \gamma \in \mathcal{L}_{\gamma_1^{(n)}}, \) we will let \( \hat \gamma \) denote the path defined for each edge \( ((x_1,x_2,\dots, x_m),((x_1',x_2',\dots, x_m'))) \in C_1(B_N)^+ \) by
\begin{equation*}
	\hat \gamma [((x_1,x_2,\dots, x_m),(x_1',x_2',\dots, x_m'))] = -\gamma[((x_1,-x_2,\dots, x_m),(x_1',-x_2',\dots, x_m'))]  
\end{equation*}
(see Figure~\ref{figure: mirrored path}). 

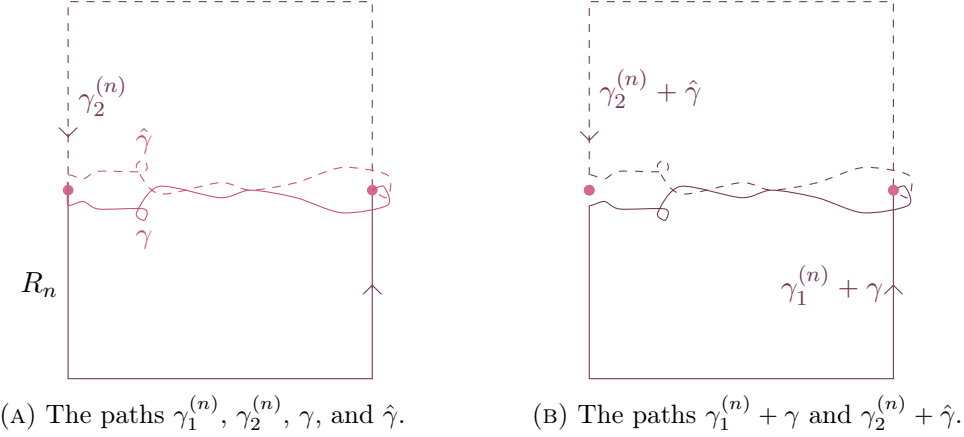
\begin{figure}[!htp]
    \centering
    \begin{subfigure}[b]{0.45\textwidth}
        \centering
        \begin{tikzpicture}
            \draw[detailcolor00] (0,2.5) -- (0,0) node[midway, left] {\color{black}\( R_n \)} -- (4,0) -- (4,2.5);

            \draw[detailcolor00, dashed] (0,2.5) -- (0,5)  node[midway,anchor=west] {\(\gamma_2^{(n)}\)}  -- (4,5)   -- (4,2.5);

           \draw[detailcolor00,-{Straight Barb[length=1.2mm,color=detailcolor00!40!black]}] (4,1.2) -- (4,1.25);
            
           \draw[detailcolor00,-{Straight Barb[length=1.2mm,color=detailcolor00!40!black]}] (0,3.25) -- (0,3.2);
           
            \fill[detailcolor04] (0,2.5) circle (2pt) node[anchor=south] {\color{black}\( \)};
            \fill[detailcolor04] (4,2.5) circle (2pt) node[anchor=south] {\color{black}\( \)};
             
             \draw[detailcolor03] (1,1.85) node[] {\( \gamma \)};
             
            \draw[detailcolor03] plot[smooth] coordinates {(0,2.5) (0,2.3) (0.2,2.35) (0.4,2.25) (1,2.25) (1,2.1) (0.9,2.2) (1.2,2.55) (2,2.4) (2.4,2.5) (3,2.4) (3.6,2.2) (4.2,2.3) (4.15,2.55) (4.0,2.5)};
              
             \begin{scope}[yscale=-1,xscale=1,yshift=-5cm]
              	\draw[detailcolor03, dashed,line cap=round] plot[smooth] coordinates {(0,2.5) (0,2.3) (0.2,2.35) (0.4,2.25) (1,2.25) (1,2.1) (0.9,2.2) (1.2,2.55) (2,2.4) (2.4,2.5) (3,2.4) (3.6,2.2) (4.2,2.3) (4.2,2.6) (4.0,2.5)};
              	
              	\draw[detailcolor03] (1,1.85) node[] {\( \hat \gamma \)};
              \end{scope}

        \end{tikzpicture}
        \caption{The paths \( \gamma_1^{(n)} ,\)   \( \gamma_2^{(n)} ,\) \( \gamma ,  \) and \( \hat \gamma. \)} \label{figure: mirrored path a}
    \end{subfigure}
    \hfil
    \begin{subfigure}[b]{0.45\textwidth}
        \centering
        \begin{tikzpicture}
             
            \draw[detailcolor00] plot[smooth] coordinates {(0,2.5) (0,2.3) (0.2,2.35) (0.4,2.25) (1,2.25) (1,2.1) (0.9,2.2) (1.2,2.55) (2,2.4) (2.4,2.5) (3,2.4) (3.6,2.2) (4.2,2.3) (4.15,2.55) (4.0,2.5)};

            \draw[white,thick] (-0.01,2.5) -- (-0.01,2.303);
            \draw[white,thick] (-0.01,2.4) -- (-0.01,2.2);
            \draw[white,thick] (0.015,2.317) -- (-0.01,2.312);
            
            \draw[detailcolor00] (0,2.3) -- (0,0)  -- (4,0)  -- (4,2.5) node[midway,anchor=east] {\(\gamma_1^{(n)} + \gamma\)}; 

           \draw[detailcolor00,-{Straight Barb[length=1.2mm,color=detailcolor00!40!black]}] (4,1.2) -- (4,1.25);
             
            \fill[detailcolor04] (0,2.5) circle (2pt) node[anchor=south] {\color{black}\( \)};
            \fill[detailcolor04] (4,2.5) circle (2pt) node[anchor=south] {\color{black}\( \)};

            \begin{scope}[yscale=-1,xscale=1,yshift=-5cm]
            	\draw[detailcolor00,dashed] plot[smooth] coordinates {(0,2.5) (0,2.3) (0.2,2.35) (0.4,2.25) (1,2.25) (1,2.1) (0.9,2.2) (1.2,2.55) (2,2.4) (2.4,2.5) (3,2.4) (3.6,2.2) (4.2,2.3) (4.2,2.6) (4.0,2.5)};  
            
            	\draw[white,thick] (-0.01,2.5) -- (-0.01,2.303);
            	\draw[white,thick] (-0.01,2.4) -- (-0.01,2.2);
            	\draw[white,thick] (0.015,2.317) -- (-0.01,2.312);
            
            	\draw[detailcolor00,dashed] (0,2.3) -- (0,0) node[midway,anchor=west] {\(\gamma_2^{(n)} + \hat\gamma\)} -- (4,0)  -- (4,2.5); 

           		\draw[detailcolor00,-{Straight Barb[length=1.2mm,color=detailcolor00!40!black]}] (0,1.8) -- (0,1.85); 
           
            	\fill[detailcolor04] (0,2.5) circle (2pt) node[anchor=south] {\color{black}\( \)};
            	\fill[detailcolor04] (4,2.5) circle (2pt) node[anchor=south] {\color{black}\( \)};
            	
            \end{scope} 
        \end{tikzpicture}
        \caption{The paths \(  \gamma_1^{(n)}+ \gamma \) and \(  \gamma_2^{(n)} + \hat \gamma. \) } \label{figure: mirrored path b}
    \end{subfigure}
    \caption{In the two figures above we illustrate the setting of the Proof of Theorem~\ref{theorem: mf ratio free}. In both pictures, we draw the open paths \( \gamma_1^{(n)} \) and  \( \gamma_2^{(n)} ,\) and also draw a path \( \gamma \in \mathcal{L}_{\gamma_1^{(n)}} .\) }
    \label{figure: mirrored path}
\end{figure}%

Before we prove Theorem~\ref{theorem: mf ratio free} we will state and prove three lemmas.

\begin{lemma}\label{lemma: convergence for gamma0 term}
	Let \( \alpha \in (0,1), \) and let \( \beta > \beta_0^{\text{(free)}} (\alpha)\) and \( \kappa <  \kappa_0^{\text{(free)}}(\alpha). \) Further, let \( \gamma \in \Lambda_0\)  Then
	\begin{align*}
		\sum_{ \mathcal{S}\in \Xi \colon  \mathcal{S}_1 \sim \gamma } \bigl|\Psi^{0,0}_{\beta,\kappa} (\mathcal{S}) \bigr|
		\leq 
		\bigl( |\gamma|+1\bigr) \sum_{j=2}^\infty 2j(2m)^{2j} (\tanh 2 \kappa)^{2(1-\alpha)j}. 
	\end{align*}
\end{lemma}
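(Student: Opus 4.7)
The strategy is a two-step reduction: first use a union bound over the choice of an ``anchor'' polymer $\gamma' \in \mathcal{S}_1$ adjacent to $\gamma$ to reduce to a single-polymer sum controlled by Proposition~\ref{proposition: cluster convergence 3}, and then use Lemma~\ref{lemma: intersection bound for path-path} to control the remaining sum over anchors.

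\textbf{Step 1 (union bound).} Any cluster $\mathcal{S} \in \Xi$ with $\mathcal{S}_1 \sim \gamma$ must, by definition of that relation, contain at least one polymer $\gamma' \in \mathcal{S}_1$ with $\gamma' \sim \gamma$. Selecting such a $\gamma'$ as an ``anchor'' and allowing clusters with several such polymers to be counted more than once yields
\begin{equation*}
    \sum_{\mathcal{S} \in \Xi \colon \mathcal{S}_1 \sim \gamma} \bigl|\Psi^{0,0}_{\beta,\kappa}(\mathcal{S})\bigr|
    \;\leq\;
    \sum_{\substack{\gamma' \in \Lambda_1 \colon \\ \gamma' \sim \gamma}} \;\sum_{\mathcal{S} \in \Xi \colon \gamma' \in \mathcal{S}} \bigl|\Psi^{0,0}_{\beta,\kappa}(\mathcal{S})\bigr|.
\end{equation*}

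\textbf{Step 2 (single-polymer bound).} For each fixed $\gamma'$, Proposition~\ref{proposition: cluster convergence 3}(1), applied with reference pair $(\gamma,\gamma_0) = (0,0)$ and polymer $\eta = \gamma'$, gives
\begin{equation*}
    \sum_{\mathcal{S} \in \Xi \colon \gamma' \in \mathcal{S}} \bigl|\Psi^{0,0}_{\beta,\kappa}(\mathcal{S})\bigr|
    \;\leq\; \bigl|\varphi^{0,0}_{\beta,\kappa}(\{\gamma'\})\bigr|^{1-\alpha}.
\end{equation*}

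\textbf{Step 3 (sum over anchors).} Substituting this bound and recognizing the resulting quantity, I invoke Lemma~\ref{lemma: intersection bound for path-path} with (in its notation) $\gamma = 0$, $\gamma_0 = 0$, reference path $\gamma'$ taken to be our $\gamma$, and $a = 1-\alpha$:
\begin{equation*}
    \sum_{\substack{\gamma' \in \Lambda_1 \colon \\ \gamma' \sim \gamma}}\bigl|\varphi^{0,0}_{\beta,\kappa}(\{\gamma'\})\bigr|^{1-\alpha}
    \;\leq\; \bigl|\{v \in C_0(B_N)^+ \colon v \sim \gamma\}\bigr|\sum_{j=2}^{\infty} 2j(2m)^{2j}(\tanh 2\kappa)^{2(1-\alpha)j}.
\end{equation*}
The prefactor is then bounded by $|\gamma|+1$ from the elementary fact that any (connected) path of $|\gamma|$ edges touches at most $|\gamma|+1$ vertices, which finishes the proof.

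\textbf{Main obstacle.} There is essentially none; the lemma is a straightforward composition of the two previously established ingredients. The only mildly delicate point is bookkeeping in the union bound of Step~1, where I rely on the fact that the bound from Proposition~\ref{proposition: cluster convergence 3}(1) holds uniformly in $\gamma'$ without a multiplicity factor, so that overcounting clusters with several anchor candidates only weakens an inequality we are already using. The vertex-count bound $|\gamma|+1$ presumes $\gamma$ is a (connected) path as used in the intended applications; if one needed to treat a disconnected $\gamma \in \Lambda_0$, the same argument would give the slightly weaker $2|\gamma|$ prefactor.
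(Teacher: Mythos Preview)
Your proposal is correct and follows essentially the same approach as the paper: a union bound over the anchor polymer $\gamma'\in\mathcal{S}_1$ with $\gamma'\sim\gamma$, then Proposition~\ref{proposition: cluster convergence 3}(1) for the inner sum, and finally Lemma~\ref{lemma: intersection bound for path-path} with $a=1-\alpha$ to bound the outer sum. Your treatment is in fact slightly more explicit than the paper's, which does not spell out the vertex-count step $|\{v: v\sim\gamma\}|\le|\gamma|+1$; your remark about the disconnected case is a fair caveat that the paper does not address.
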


\begin{proof}
	By Proposition~\ref{proposition: cluster convergence 3}, we have
	\begin{align*}
		&\sum_{ \mathcal{S}\in \Xi \colon   \mathcal{S}_1  \sim \gamma } \bigl|\Psi^{0,0}_{\beta,\kappa}(\mathcal{S}) \bigr|
		\leq
		\sum_{\gamma' \in \Lambda_1 \colon \gamma' \sim \gamma} \sum_{\mathcal{S}\in \Xi \colon \gamma' \in \mathcal{S}} \bigl|\Psi^{0,0}_{\beta,\kappa}(\mathcal{S}) \bigr|
		\leq
		\sum_{\gamma' \in \Lambda_1 \colon \gamma' \sim \gamma} \bigl| \varphi^{0,0}_{\beta,\kappa}(\{ \gamma' \})\bigr|^{1-\alpha}.
	\end{align*}
	Applying Lemma~\ref{lemma: intersection bound for path-path} with \( a = 1-\alpha, \) we obtain the desired conclusion.  
\end{proof}

\begin{lemma}\label{lemma: convergence for gamma0 term 2}
	Let \( \alpha \in (0,1), \) and let \( \beta > \beta_0^{\text{(free)}} (\alpha)\) and \( \kappa <  \kappa_0^{\text{(free)}}(\alpha). \)  Further, let \( \gamma \) be a closed path. Then
	\begin{align*}
		\sum_{ \mathcal{S}\in \Xi \colon  \mathcal{S}_2 \sim \gamma } \bigl|\Psi^{0,0}_{\beta,\kappa} (\mathcal{S}) \bigr|
		\leq 
		D_0 | \gamma|\sum_{j=1}^\infty  j^3  \sum_{k=\max(4j, 2(d-1))}^\infty 
         M_3^{2k-1} e^{-4\beta (1-\alpha) k}.
	\end{align*}
\end{lemma}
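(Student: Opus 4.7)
The plan is to mimic the proof of the previous lemma (Lemma on the bound for $\mathcal{S}_1 \sim \gamma$) but replace the path-path intersection bound (Lemma \ref{lemma: intersection bound for path-path}) by the path-spin intersection bound (Lemma \ref{lemma: intersection bound for path-spin}). The structure is the same in two steps: (a) reduce the sum over clusters containing some polymer in $\Lambda_2$ that interacts with $\gamma$ to a sum over single such polymers by means of the polymer-level bound of Proposition \ref{proposition: cluster convergence 3}; (b) bound the resulting one-polymer sum via the already established intersection estimate.

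Concretely, I would first write, using the union bound over the choice of a witness polymer $\omega \in \mathcal{S}_2$ with $\omega \sim \gamma$,
\begin{equation*}
    \sum_{\mathcal{S} \in \Xi \colon \mathcal{S}_2 \sim \gamma} \bigl| \Psi^{0,0}_{\beta,\kappa}(\mathcal{S}) \bigr|
    \leq
    \sum_{\omega \in \Lambda_2 \colon \omega \sim \gamma} \sum_{\mathcal{S} \in \Xi \colon \omega \in \mathcal{S}} \bigl| \Psi^{0,0}_{\beta,\kappa}(\mathcal{S}) \bigr|.
\end{equation*}
Next, I would apply Proposition \ref{proposition: cluster convergence 3} (with $\gamma = \gamma_0 = 0$ in the activity, which is allowed since we are computing $\Psi^{0,0}$) to each inner sum, yielding
\begin{equation*}
    \sum_{\mathcal{S} \in \Xi \colon \omega \in \mathcal{S}} \bigl| \Psi^{0,0}_{\beta,\kappa}(\mathcal{S}) \bigr|
    \leq
    \bigl| \varphi^{0,0}_{\beta,\kappa}(\{\omega\}) \bigr|^{1-\alpha}.
\end{equation*}

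Finally, I would invoke Lemma \ref{lemma: intersection bound for path-spin} with the roles $\gamma' = \gamma$, $\gamma_0 = 0$, and $a = 1-\alpha$, which gives exactly
\begin{equation*}
    \sum_{\omega \in \Lambda_2 \colon \omega \sim \gamma} \bigl| \varphi^{0,0}_{\beta,\kappa}(\{\omega\}) \bigr|^{1-\alpha}
    \leq
    |\gamma| \sum_{j=1}^\infty D_0 j^3 \sum_{k=\max(4j,2(d-1))}^\infty M_3^{2k-1} e^{-4\beta(1-\alpha)k},
\end{equation*}
matching the desired right-hand side. There is no real obstacle here: the hypothesis $\beta > \beta_0^{(\textrm{free})}(\alpha)$, $\kappa < \kappa_0^{(\textrm{free})}(\alpha)$ guarantees both that Proposition \ref{proposition: cluster convergence 3} applies and that the geometric series in the final bound converge, so the proof is essentially a direct combination of the two ingredients, entirely parallel to the proof of Lemma \ref{lemma: convergence for gamma0 term}.
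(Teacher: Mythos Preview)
Your proposal is correct and follows essentially the same approach as the paper: first pass to a witness polymer $\omega \in \Lambda_2$ with $\omega \sim \gamma$, apply Proposition~\ref{proposition: cluster convergence 3} to bound the inner cluster sum by $|\varphi^{0,0}_{\beta,\kappa}(\{\omega\})|^{1-\alpha}$, and then invoke Lemma~\ref{lemma: intersection bound for path-spin} with $a = 1-\alpha$. The paper's proof is line-for-line the same argument.
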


\begin{proof}
	By Proposition~\ref{proposition: cluster convergence 3}, we have
	\begin{align*}
		&\sum_{ \mathcal{S}\in \Xi \colon   \mathcal{S}_2  \sim \gamma } \bigl|\Psi^{0,0}_{\beta,\kappa}(\mathcal{S}) \bigr|
		\leq
		\sum_{\omega \in \Lambda_2 \colon \omega \sim \gamma} \sum_{\mathcal{S}\in \Xi \colon \omega \in \mathcal{S}} \bigl|\Psi^{0,0}_{\beta,\kappa}(\mathcal{S}) \bigr|
		\leq
		\sum_{\omega \in \Lambda_2 \colon \omega \sim \gamma} \bigl| \varphi^{0,0}_{\beta,\kappa}(\{ \omega \})\bigr|^{1-\alpha}
	\end{align*}
	Applying Lemma~\ref{lemma: intersection bound for path-spin} with \( 1=1-\alpha,\) we obtain the desired conclusion.  
\end{proof}

\begin{lemma}\label{lemma: convergence for gamma0 term 4}
	Let \( \alpha \in (0,1), \) and let \( \beta > \beta_0^{\text{(free)}} (\alpha)\) and \( \kappa <  \kappa_0^{\text{(free)}} (\alpha). \) Further, let \( \gamma \in \mathcal{L}_{\gamma_1}, \) let \( k \geq 2(m-1), \) and let \( \varepsilon>0 \) be such that \( (1-\varepsilon)\beta > \beta_0^{(free)}(\alpha) \) and \( (\tanh 2\kappa)^{1-\varepsilon} < \tanh(2\kappa_0^{(free)}(\alpha)). \) Then  
	\begin{align*}
		\sum_{\substack{\mathcal{S}\in \Xi \colon \mathcal{S}_2 \sim \gamma_1+\hat \gamma,\\ \mathcal{S}_2 \sim \gamma_2+ \hat \gamma}} \bigl| \Psi^{0,0}_{\beta,\kappa} (\mathcal{S})   \bigr|
		&\leq 
		2D_0 |\gamma|e^{-8\beta\varepsilon  (m-1)} 
		  \sum_{k=2(m-1)}^\infty M_3^{2k-1} e^{-4(1-\varepsilon)\beta k}
		  \\&\hspace{4em}\cdot \sum_{j=1}^\infty  j^3 \max(e^{-4\beta},\tanh 2\kappa)^{\varepsilon\max(4j-2(m-1),0)} .
	\end{align*} 
\end{lemma}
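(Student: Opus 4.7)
The plan is to apply Proposition~\ref{proposition: cluster convergence 3} to collapse the cluster sum to a sum over a single witness polymer, and then to exploit the geometric constraint that the two linking conditions impose on such a witness. Set $\Gamma_i \coloneqq \gamma_i + \hat\gamma$ for $i=1,2$ and observe that $\Gamma_1$ and $\Gamma_2$ share precisely the path $\hat\gamma$ (with $\support\hat\gamma = \support\gamma$), while $\Gamma_1 + \Gamma_2 = \gamma_1 + \gamma_2$ in $\mathbb{Z}_2$.

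For each cluster $\mathcal{S}$ contributing to the sum, pick a witness polymer $\omega \in \mathcal{S}_2$ with $\omega \sim \Gamma_1$. The second condition $\mathcal{S}_2 \sim \Gamma_2$, combined with the fact that any polymer close only to $\gamma_1$ is too far from $\Gamma_2 = \gamma_2 + \hat\gamma$ to link it, forces $\omega$ (or a neighbouring polymer of the cluster) to be close to the common path $\hat\gamma$. Quantitatively, if $j \coloneqq \dist(\support\omega,\support\gamma)$ then, as in the proof of Lemma~\ref{lemma: intersection bound for path-spin}, $|(\support\omega)^+| \geq \max(4j, 2(m-1))$. Proposition~\ref{proposition: cluster convergence 3} then gives
\begin{equation*}
\sum_{\mathcal{S} \ni \omega} \bigl|\Psi^{0,0}_{\beta,\kappa}(\mathcal{S})\bigr| \leq \bigl|\varphi^{0,0}_{\beta,\kappa}(\{\omega\})\bigr|^{1-\alpha} = e^{-4(1-\alpha)\beta|(\support\omega)^+|}.
\end{equation*}

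The remaining sum is indexed by an edge $e \in \support\gamma$ (contributing the factor $|\gamma|$), a plaquette $p \in \support\omega$ at distance $j$ from $e$ (contributing $D_0 j^3$, from the standard bound on plaquettes at distance $j$), and a connected polymer of size $k$ through $p$ (contributing $M_3^{2k-1}$, by the spanning-tree argument used throughout Section~\ref{sec: cluster expansions free}). Using the hypothesis $(1-\varepsilon)\beta > \beta_0^{(\textrm{free})}(\alpha)$, the exponential $e^{-4(1-\alpha)\beta k}$ is split as $e^{-4(1-\varepsilon)\beta k}\cdot e^{-4(\varepsilon-\alpha)\beta k}$. From $k \geq \max(4j, 2(m-1))$ the trailing factor decomposes as
\begin{equation*}
e^{-4(\varepsilon-\alpha)\beta k} \leq e^{-8(\varepsilon-\alpha)\beta(m-1)}\cdot \max(e^{-4\beta},\tanh 2\kappa)^{(\varepsilon-\alpha)\max(4j-2(m-1),0)},
\end{equation*}
which (after absorbing $(\varepsilon-\alpha)$ into the ambient choice of $\varepsilon$) matches the prefactor $e^{-8\beta\varepsilon(m-1)}$ and the distance-dependent saving appearing in the statement. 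The alternative $\tanh 2\kappa$ in the maximum appears because the portion of the cluster reaching from $\omega$ to $\support\gamma$ may pass through $\Lambda_1$-polymers of activity $(\tanh 2\kappa)^{|\gamma'|}$; the two possible per-step weights must be bounded uniformly. The factor $2$ in $2D_0|\gamma|$ accommodates the symmetric witness choice $\omega \sim \Gamma_2$ in place of $\omega \sim \Gamma_1$.

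The main difficulty is the case where no single polymer in $\mathcal{S}_2$ simultaneously witnesses both linking conditions, and the linking information is instead transferred through a chain of polymers in the cluster. I expect this to be absorbed by applying the witness argument to whichever polymer is closer to $\hat\gamma$ and using the $(1-\alpha)$-strength of Proposition~\ref{proposition: cluster convergence 3} to dominate the contribution of the more distant polymer, which must itself be large enough to reach $\gamma_1$ or $\gamma_2$ and hence is super-exponentially suppressed.
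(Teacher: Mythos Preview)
Your order of operations is the wrong way round, and this creates a real gap. You first apply Proposition~\ref{proposition: cluster convergence 3} at the original parameters \((\beta,\kappa)\), reducing to a single witness polymer \(\omega\in\Lambda_2\) with bound \(e^{-4(1-\alpha)\beta|(\support\omega)^+|}\), and only then try to extract the geometric decay. But once the cluster has been collapsed to a single \(\Lambda_2\)-polymer there is no \(\kappa\)-dependence left, so the factor \(\max(e^{-4\beta},\tanh 2\kappa)\) cannot arise from your argument; your explanation that the chain ``may pass through \(\Lambda_1\)-polymers'' is precisely information that has already been discarded. More seriously, your size constraint \(|(\support\omega)^+|\ge\max(4j,2(m-1))\) with \(j=\dist(\support\omega,\support\gamma)\) is not justified: the witness \(\omega\) only needs to link \(\Gamma_1=\gamma_1+\gamma\), and it can do so near \(\gamma_1\) while being arbitrarily far from \(\gamma\). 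The constraint that forces largeness is on the \emph{whole cluster} \(\|\mathcal{S}\|\ge 4j\) (with \(j=\dist(\mathcal{S}_2,\{\gamma,\hat\gamma\})\)), not on any single polymer; your ``main difficulty'' paragraph acknowledges this but does not resolve it.

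The paper reverses the order. It introduces auxiliary parameters \(\beta'=(1-\varepsilon)\beta\) and \(\kappa'\) with \(\tanh 2\kappa'=(\tanh 2\kappa)^{1-\varepsilon}\), chosen (via the hypothesis on \(\varepsilon\)) so that Proposition~\ref{proposition: cluster convergence 3} still applies at \((\beta',\kappa')\). One then factors
\[
\bigl|\Psi^{0,0}_{\beta,\kappa}(\mathcal{S})\bigr|
=\bigl|\varphi^{0,0}_{\beta,\kappa}(\mathcal{S})\bigr|^{\varepsilon}\,\bigl|\Psi^{0,0}_{\beta',\kappa'}(\mathcal{S})\bigr|.
\]
The first factor is bounded \emph{before} summing, using the cluster-level constraints \(\|\mathcal{S}_2\|\ge 2(m-1)\) and \(\|\mathcal{S}\|\ge 4j\); since \(\|\mathcal{S}\|=\|\mathcal{S}_1\|+\|\mathcal{S}_2\|\) and the per-cell weights are \(e^{-4\beta}\) and \(\tanh 2\kappa\) respectively, one gets exactly the prefactor \(e^{-8\beta\varepsilon(m-1)}\max(e^{-4\beta},\tanh 2\kappa)^{\varepsilon\max(4j-2(m-1),0)}\). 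Only then is Proposition~\ref{proposition: cluster convergence 3} invoked, at \((\beta',\kappa')\), to sum \(|\Psi^{0,0}_{\beta',\kappa'}(\mathcal{S})|\) over clusters containing a fixed \(\omega\) with a plaquette at distance \(j\) from \(\gamma\cup\hat\gamma\); the counts \(2D_0 j^3|\gamma|\) and \(M_3^{2k-1}\) enter at this stage. The point is that the geometric saving must be harvested from the full cluster activity while you still have access to \(\|\mathcal{S}\|\), and the role of the \(\varepsilon\)-hypothesis is precisely to leave enough weight behind for the Kotecký--Preiss bound to converge afterwards.
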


\begin{proof}
	To simplify notation, let \(  \beta' \coloneqq (1-\varepsilon)\beta   \) and \( \kappa' \coloneqq \tanh^{-1}((\tanh 2\kappa)^{1-\varepsilon})/2 . \) Note that, by assumption, we have
	\(  \beta'   > \beta_0^{(free)}(\alpha) \) and \( \kappa' \  < \kappa_0^{(free)}(\alpha). \) Further, note that with this notation, for any \( \mathcal{S} \in \Xi \) we have
	\[
	|\Psi^{0,0}_{\beta,\kappa} (\mathcal{S})|^{1-\varepsilon} = |\Psi^{0,0}_{\beta',\kappa'}(\mathcal{S})|.
	\]

	Let \( j \geq 1 \) and  \( \mathcal{S} \in \Xi \) be such that \(  \mathcal{S}_2 \sim \gamma_1+\gamma,\) \( \mathcal{S}_2 \sim \gamma_2+\hat \gamma, \) and \( \dist(\mathcal{S}_2,\{\gamma,\hat \gamma\})=j . \) Then we must have \( \|\mathcal{S} \| \geq 4j \) and \( \|\mathcal{S}_2 \| \geq 2(m-1), \)  and hence
	\begin{align*}
		&\bigl| \Psi^{0,0}_{\beta,\kappa}(\mathcal{S}) \bigr|
		=
		\bigl| \Psi^{0,0}_{\beta,\kappa}(\mathcal{S}) \bigr|^\varepsilon
		\bigl| \Psi^{0,0}_{\beta,\kappa}(\mathcal{S}) \bigr|^{1-\varepsilon}
		=
		\bigl| \Psi^{0,0}_{\beta,\kappa}(\mathcal{S}) \bigr|^\varepsilon
		\bigl| \Psi^{0,0}_{\beta',\kappa'}(\mathcal{S}) \bigr|
		\\&\qquad\leq
		e^{-8\beta\varepsilon (m-1)} \max(e^{-4\beta},\tanh 2\kappa )^{\varepsilon \max(4j-2(m-1),0)}
		 \bigl| \Psi^{0,0}_{\beta',\kappa'}(\mathcal{S}) \bigr|.
	\end{align*} 
	Consequently, we have 
	\begin{align*}
		&\sum_{\substack{\mathcal{S}\in \Xi \colon \mathcal{S}_2 \sim \gamma_1+\gamma,\\\mathcal{S}_2 \sim \gamma_2+\hat \gamma}} \bigl| \Psi^{\gamma,\gamma_0}_{\beta,\kappa} (\mathcal{S}) \bigr|
		=
		\sum_{j=1}^\infty \sum_{\substack{\mathcal{S}\in \Xi \colon \mathcal{S}_2 \sim \gamma_1+\gamma,\gamma_2+\hat \gamma,\\ \substack{\dist(\mathcal{S}_2,\{\gamma,\hat \gamma\})=j }}} \bigl| \Psi^{0,0}_{\beta,\kappa} (\mathcal{S}) \bigr| 
		\\&\qquad\leq 
		e^{-8\beta\varepsilon  (m-1)} \sum_{j=1}^\infty  \max(e^{-4\beta},\tanh 2\kappa )^{\varepsilon\max(4j-2(m-1),0)}
		\sum_{\substack{\mathcal{S}\in \Xi \colon \\\ \substack{\dist(\mathcal{S}_2,\{\gamma,\hat\gamma\})=j }}} \bigl| \Psi^{0,0}_{\beta',\kappa'}(\mathcal{S}) \bigr|.
	\end{align*}  
	By Proposition~\ref{proposition: cluster convergence 3}, we have
	\begin{align*}
		&\sum_{\substack{\mathcal{S}\in \Xi \colon \\ \substack{\dist(\mathcal{S}_2,\{\gamma,\hat \gamma\})=j }}} \bigl| \Psi^{0,0}_{\beta',\kappa'}(\mathcal{S}) \bigr|
		\leq 
		\sum_{\substack{\omega \in \Lambda_2 \colon  \\ \dist(\mathcal{S}_2,\{\gamma,\hat \gamma\})=j}}
		\sum_{\substack{\mathcal{S}\in \Xi \colon \omega \in \mathcal{S}}} \bigl| \Psi^{0,0}_{\beta',\kappa'}(\mathcal{S}) \bigr|
		\\&\qquad\leq 
		\sum_{\substack{\omega \in \Lambda_2 \colon \\ \dist(\mathcal{S}_2,\{\gamma,\hat \gamma\})=j}}
		 \bigl| \varphi^{0,0}_{\beta',\kappa'}(\omega) \bigr|.
	\end{align*}
	Recall that there are at most \( 2D_0 j^3 |\gamma| \) positively oriented plaquettes at distance \( j \) from \( \support  \gamma \cup \support \hat \gamma. \) Also, note that each \( \omega \in \Lambda_2 \) corresponds to a connected subgraph of \( \mathcal{G}_3 \) which has a spanning path of length at most \( M_3^{2|(\support \omega)^+|-1}.\) Combining these observations, we obtain the  upper bound 
	\begin{align*}
		&\sum_{\substack{\omega \in \Lambda_2 \colon \\ \dist(\mathcal{S}_2,\{\gamma,\hat \gamma\})=j}}
		 \bigl| \varphi^{0,0}_{\beta',\kappa'}(\omega) \bigr| 
		 \leq
		  2D_0j^3 |\gamma| \sum_{k=2(m-1)}^\infty M_3^{2k-1} e^{-4\beta' k}.
	\end{align*}
	Combining the above equations, the desired conclusion follows.
\end{proof}

\subsection{Proof of Theorem~\ref{theorem: mf ratio free}}\label{section: proof of main result free}

\begin{proof}[Proof of Theorem~\ref{theorem: mf ratio free}]
	For some \( \alpha \in (0,1), \) let \( \beta > \beta_0^{\text{(free)}}(\alpha) \) and \( \kappa < \kappa_0^{\text{(free)}}(\alpha) \) 
	
	Fix \( n \geq 0. \) To simplify notation, let \( \gamma_1 = \gamma_1^{(n)} \) and \( \gamma_2 = \gamma_2^{(n)}. \)

	By Proposition~\ref{proposition: cluster convergence 3} 
	that \( \check Z_{N,\beta,\kappa} [\gamma_1]  ,\) \(  \check Z_{N,\beta,\kappa} [\gamma_2] ,\) 
	\( \check Z_{N,\beta,\kappa} [\gamma_1+\gamma_2] , \) and \( \check Z_{N,\beta,\kappa}[0] \) are all strictly positive.
	By combining Lemma~\ref{lemma: hce 3} and Lemma~\ref{lemma: high temperature free alternative}, we can thus write
	\begin{align*}
		&\rho_N(\gamma_1,\gamma_2 )^{1/2} =
		\biggl( \frac{ \check Z_{N,\beta,\kappa} [\gamma_1]   \check Z_{N,\beta,\kappa} [\gamma_2] }{ \check Z_{N,\beta,\kappa} [\gamma_1+\gamma_2]  \check Z_{N,\beta,\kappa}} \biggr)^{1/2}
		=
		\frac{ \check Z_{N,\beta,\kappa} [\gamma_1] }{ (\check Z_{N,\beta,\kappa} [\gamma_1+\gamma_2]  \check Z_{N,\beta,\kappa})^{1/2}}
		\\&\qquad= 
    	\sum_{\gamma \in \mathcal{L}_{\gamma_1}} 
    	(\tanh 2\kappa)^{|\gamma|} 
    	\frac{  {\check Z}_{N,\beta,\kappa} [\gamma_1+\gamma,\gamma] }{ \bigl( \check Z_{N,\beta,\kappa} [\gamma_1+\gamma_2,0] \check Z_{N,\beta,\kappa}[0,0]\bigr)^{1/2}}.
	\end{align*} 
	
	Now fix any \( \gamma \in \mathcal{L}_{\gamma_1} \) and  let
	\begin{align*}
		&\rho_N(\gamma_1,\gamma_2,\gamma)  \coloneqq
    	\frac{  {\check Z}_{N,\beta,\kappa} [\gamma_1+\gamma,\gamma]^2 }{ \check Z_{N,\beta,\kappa} [\gamma_1+\gamma_2,0] {\check Z}_{N,\beta,\kappa}[0,0] }.
	\end{align*}

	 By Proposition~\ref{proposition: cluster convergence 3}, we have
	\begin{align*}
		&\log \rho_N(\gamma_1,\gamma_2,\gamma) 
		=  
    	2\log    {\check Z}_{N,\beta,\kappa} [\gamma_1+\gamma,\gamma]  -\log   {\check Z}_{N,\beta,\kappa} [\gamma_1+\gamma_2,0]-\log  {\check Z}_{N,\beta,\kappa}[0,0]
    	\\&\qquad=
    	2\sum_{\mathcal{S}\in \Xi} \Psi^{\gamma_1+\gamma,\gamma}_{\beta,\kappa}(\mathcal{S})
    	-\sum_{\mathcal{S}\in \Xi} \Psi^{\gamma_1+\gamma_2,0}_{\beta,\kappa}(\mathcal{S})
    	-\sum_{\mathcal{S}\in \Xi} \Psi^{0,0}_{\beta,\kappa}(\mathcal{S}) .
	\end{align*}
	Now note that 
		\begin{align*}
		&
		2\sum_{\mathcal{S}\in \Xi} \Psi^{\gamma_1+\gamma,\gamma}_{\beta,\kappa}(\mathcal{S})
    	-\sum_{\mathcal{S}\in \Xi} \Psi^{\gamma_1+\gamma_2,0}_{\beta,\kappa}(\mathcal{S})
    	-\sum_{\mathcal{S}\in \Xi} \Psi^{0,0}_{\beta,\kappa}(\mathcal{S}) 
    	\\&\qquad=
		\sum_{\mathcal{S}\in \Xi} \Psi^{0,0}_{\beta,\kappa}(\mathcal{S}) 
		\Bigl( 2\prod_{\omega \in \mathcal{S}}\rho\bigl(\omega(q_{\gamma_1+\gamma})\bigr)  \prod_{\gamma'\in \mathcal{S}} \mathbf{1}(\gamma' \nsim \gamma)-\prod_{\omega \in \mathcal{S}} \rho\bigl(\omega(q_{\gamma_1+\gamma_2})\bigr)-1 \Bigr) 
    	\\&\qquad=
    	-
		2\sum_{\mathcal{S}\in \Xi} \Psi^{0,0}_{\beta,\kappa}(\mathcal{S}) 
		\Bigl( \prod_{\omega \in \mathcal{S}} \rho\bigl(\omega(q_{\gamma+\gamma_1})\bigr)\Bigr) \Bigl( 1-\prod_{\gamma'\in \mathcal{S}} \mathbf{1}(\gamma' \nsim \gamma) \Bigr)  \qquad (\eqqcolon A_0  ) 
		\\&\qquad\qquad+
		\sum_{\mathcal{S}\in \Xi} \Psi^{0,0}_{\beta,\kappa}(\mathcal{S}) 
		\Bigl( 2\prod_{\omega \in \mathcal{S}}\rho\bigl(\omega(q_{\gamma_1+\gamma})\bigr)   -\prod_{\omega \in \mathcal{S}} \rho\bigl(\omega(q_{\gamma_1+\gamma_2})\bigr)-1 \Bigr) .
	\end{align*}  
	Further, we have
	\begin{align*}
		&\sum_{\mathcal{S}\in \Xi} \Psi^{0,0}_{\beta,\kappa}(\mathcal{S}) 
		\Bigl( 2\prod_{\omega \in \mathcal{S}}\rho\bigl(\omega(q_{\gamma_1+\gamma})\bigr)   -\prod_{\omega \in \mathcal{S}} \rho\bigl(\omega(q_{\gamma_1+\gamma_2})\bigr)-1 \Bigr)   
		\\&\qquad=
		\sum_{\mathcal{S}\in \Xi} \Psi^{0,0}_{\beta,\kappa}(\mathcal{S}) 
		\Bigl( \prod_{\omega \in \mathcal{S}}\rho\bigl(\omega(q_{\gamma_1+\gamma})\bigr)+\prod_{\omega \in \mathcal{S}}\rho\bigl(\omega(q_{\gamma_2+\hat \gamma})\bigr)   -\prod_{\omega \in \mathcal{S}} \rho\bigl(\omega(q_{\gamma_1+\gamma_2})\bigr)-1 \Bigr)  
		\\&\qquad=
		-\sum_{\mathcal{S}\in \Xi} \Psi^{0,0}_{\beta,\kappa}(\mathcal{S})  
		\Bigr(\prod_{\omega \in \mathcal{S}}\rho\bigl(\omega(q_{\gamma_1+\gamma})\bigr)-1\Bigr)
		\Bigr(\prod_{\omega \in \mathcal{S}}\rho\bigl(\omega(q_{\gamma_2+\hat \gamma})\bigr)-1\Bigr) \qquad (\eqqcolon A_1  )
		 \\&\qquad\qquad-
		 \sum_{\mathcal{S}\in \Xi} \Psi^{0,0}_{\beta,\kappa}(\mathcal{S}) \prod_{\omega \in \mathcal{S}} \rho\bigl(\omega(q_{\gamma_1+\gamma_2})\bigr) (1- \prod_{\omega \in \mathcal{S}_2}\rho(\omega(q_{\gamma+\hat \gamma})))\Bigr)  \qquad (\eqqcolon A_2   ) .
	\end{align*}
	Consequently, if we define \( A_0 ,\) \( A_1  , \) and \( A_2  \) as above, then 
	\begin{equation*}
		\log \rho_N(\gamma_1,\gamma_2,\gamma) = A_0 + A_1+A_2.
	\end{equation*}
	We now give upper bounds for \( A_0, \) \( A_1 ,\) and \( A_2. \) To this end, note first that by Lemma~\ref{lemma: convergence for gamma0 term}, we have 
	\begin{equation*}
		|A_0| \leq  \sum_{\mathcal{S}\in \Xi \colon \mathcal{S}_1 \sim \gamma} \bigl| \Psi^{0,0}_{\beta,\kappa}(\mathcal{S})  \bigr| 
		\leq 
		\bigl( |\gamma|+1\bigr) \sum_{j=2}^\infty 2j(2m)^{2j} (\tanh 2 \kappa)^{2(1-\alpha)j}. 
	\end{equation*}
	Next, by Lemma~\ref{lemma: convergence for gamma0 term 2}, we have
	\begin{equation*}
		|A_2| \leq D_0| \gamma|\sum_{j=1}^\infty  j^3  \sum_{k=\max(4j, 2(d-1))}^\infty 
         M_3^{2k-1} e^{-4\beta (1-\alpha) k}.
	\end{equation*}
	Finally, by Lemma~\ref{lemma: convergence for gamma0 term 4}, we have
	\begin{align*}
		&|A_1| \leq 4\sum_{\substack{\mathcal{S}\in \Xi \colon \mathcal{S}_2 \sim \gamma_1+\gamma,\\\mathcal{S}_2 \sim \gamma_2+\hat \gamma}} \bigl| \Psi^{0,0}_{\beta,\kappa}(\mathcal{S}) \bigr| 
		\\&\qquad\leq
		8D_0 |\gamma|e^{-8\beta\varepsilon  (m-1)} 
		  \sum_{k=2(m-1)}^\infty M_3^{2k-1} e^{-4(1-\varepsilon)\beta k}
		  \\&\qquad\qquad\cdot \sum_{j=1}^\infty  j^3 \max(e^{-4\beta},\tanh 2\kappa )^{\varepsilon\max(4j-2(m-1),0)} .
	\end{align*} 
	for some small \( \varepsilon>0. \)
	Combining the three previous equations, it follows that as \( \beta \to \infty \) and \( \kappa \to 0, \) we have
	\begin{equation}\label{eq: small uniform upper bound higgs}
		\frac{\log \rho_N(\gamma_1,\gamma_2,\gamma)}{|\gamma|} \leq o_{\beta}(1) + o_\kappa(1),
	\end{equation}
	where the right hand side is independent of \( \gamma_1, \) \( \gamma_2, \) \( \gamma, \) \( N, \) and \( n. \) In particular, if \( \beta \) is sufficiently large and \( \kappa \) is sufficiently small, then the right-hand-side of~\eqref{eq: small uniform upper bound higgs} is strictly smaller than \( 1/(2m \tanh 2\kappa). \)
	Finally, we note that since any vertex of \( B_N \) has degree \( 2m, \) we have
	\begin{align*}
		\lim_{n\to \infty}\sum_{\gamma \in \mathcal{L}_{\gamma_1^{(n)} }} 
    	(\tanh 2\kappa)^{|\gamma|} 
    	\leq \lim_{n\to \infty}\sum_{j = T_n}^\infty (2m)^j (\tanh 2\kappa)^{j} 
    	= 0.
	\end{align*}
	Noting that
	\[
		\rho_N(\gamma_1^{(n)},\gamma_2^{(n)})^{1/2} = \sum_{\gamma \in \mathcal{L}_{\gamma_1^{(n)}}} (\tanh 2\kappa)^{|\gamma|} \rho_N(\gamma_1,\gamma_2,\gamma),
	\]
	the desired conclusion immediately follows. 
\end{proof}

\end{document}